\documentclass{amsart}

\usepackage{amsmath}
\usepackage{amssymb}
\usepackage{amsthm}

\usepackage[colorlinks,linkcolor=blue]{hyperref}

\usepackage[margin=1.0in]{geometry}               
\geometry{letterpaper}

\usepackage[color=green!40]{todonotes}
\usepackage{comment}

\usepackage{tikz}
\tikzstyle{ball} = [circle,shading=ball, ball color=black,
    minimum size=1mm,inner sep=1.3pt]
\tikzstyle{miniball} = [circle,shading=ball, ball color=black,
    minimum size=1mm,inner sep=0.5pt]
\usepackage{tikz-cd}

\usepackage{enumitem}

\usepackage{systeme}

\usepackage{needspace}  

\usepackage{url}

\newtheorem{thm}{Theorem}[section]
\newtheorem{lemma}[thm]{Lemma}

\newtheorem{cor}[thm]{Corollary}
\newtheorem{prop}[thm]{Proposition}
\newtheorem{property}[thm]{Property}

\newtheorem{Definition}[thm]{Definition}
\newenvironment{defn}
  {\begin{Definition}\rm}{\end{Definition}}

\newtheorem{Example}[thm]{Example}
\newenvironment{example}
  {\begin{Example}\rm}{\end{Example}}

\newtheorem{Remark}[thm]{Remark}
\newenvironment{remark}
  {\begin{Remark}\rm}{\end{Remark}}

\DeclareMathOperator{\Prin}{Prin}
\DeclareMathOperator{\Pic}{Pic}
\DeclareMathOperator{\Jac}{Jac}
\DeclareMathOperator{\Div}{Div}
\DeclareMathOperator{\smalldiv}{div}

\DeclareMathOperator{\diag}{diag}

\DeclareMathOperator{\im}{im}
\DeclareMathOperator{\cok}{cok}

\DeclareMathOperator{\ord}{ord}
\DeclareMathOperator{\Span}{Span}
\DeclareMathOperator{\GL}{GL}
\DeclareMathOperator{\Hom}{Hom}

\DeclareMathOperator{\code}{code}

\newcommand{\primary}{\mathcal{P}}
\newcommand{\secondary}{\mathcal{S}}

\newcommand{\I}{\mathbf{i}\mkern1mu}
\newcommand{\Z}{\mathbb{Z}}
\newcommand{\Q}{\mathbb{Q}}
\newcommand{\R}{\mathbb{R}}
\newcommand{\C}{\mathbb{C}}
\newcommand{\N}{\mathbb{N}}
\newcommand{\E}{\mathbb{E}}
\newcommand{\tL}{\tilde{L}}

\newcommand{\tl}{\ell}
\newcommand{\K}{\mathcal{K}}

\newcommand{\wA}{\widehat{A}}
\newcommand{\tK}{\widetilde{\mathcal{K}}}
\newcommand{\polyring}{\C[\mathbf{x}]}
\newcommand{\polyringy}{\C[\mathbf{y}]}
\newcommand{\neck}{\mathcal{N}}

\title{Enumerating Linear Systems on Graphs}
\author{Sarah Brauner}
\address{University of Minnesota, Minneapolis, MN}
\email{braun622@umn.edu}
\author{Forrest Glebe}
\address{Purdue University, West Lafayette, IN}
\email{fglebe@purdue.edu}
\author{David Perkinson}
\address{Reed College, Portland, OR}
\email{davidp@reed.edu}


\subjclass[2010]{primary 05C30, secondary 05C25}

\keywords{divisor theory of graphs, complete linear system, chip-firing, graph
Laplacian, binary necklaces, $M$-matrix}

\begin{document}

\begin{abstract} The divisor theory of graphs views a finite connected graph $G$
  as a discrete version of a Riemann surface.  Divisors on~$G$ are formal
  integral combinations of the vertices of~$G$, and linear equivalence of
  divisors is determined by the discrete Laplacian operator for~$G$.  As in the
  case of Riemann surfaces, we are interested in the complete linear
  system~$|D|$ of a divisor~$D$---the collection of nonnegative divisors
  linearly equivalent to~$D$.  Unlike the case of Riemann surfaces, the complete
  linear system of a divisor on a graph is always finite.  We compute generating
  functions encoding the sizes of all complete linear systems on~$G$ and
  interpret our results in terms of polyhedra associated with divisors and in
  terms of the invariant theory of the (dual of the) Jacobian group of~$G$.
  If~$G$ is a cycle graph, our results lead to a bijection between complete
  linear systems and binary necklaces.  Our results also apply to a model in
  which the Laplacian is replaced by an invertible,
  integral~$M$-matrix.
\end{abstract}

\maketitle

\section{Introduction}

Let~$G$ be a finite, connected, undirected graph with vertex set~$V$.  The
divisor theory of graphs uses the graph Laplacian to view~$G$ as a discrete
analogue of a Riemann surface.  As a reference, the reader should consult the
seminal paper by Baker and Norine (\cite{Baker}), a main result of which is the
Riemann-Roch theorem for graphs.  That work is related to a broader circle of
ideas that includes chip-firing on graphs (\cite{Bjorner}), the arithmetical groups
of Lorenzini (\cite{Lorenzini}), the abelian sandpile model (\cite{BTW},
\cite{Dhar}), and parking functions in combinatorics (\cite{Postnikov}).  For
general textbooks, including many references, see (\cite{Corry} and
\cite{Klivans}).  The papers~\cite{Holroyd} and~\cite{Bacher} are also recommended.

Precise definitions follow in Section~\ref{sect: prelims}, but for the purposes
of this introduction, it is useful to think of divisor theory on graphs in terms
of the {\em dollar game} introduced in \cite{Baker}.  By definition, a {\em
divisor}~$D$ is an element of ~$\Z V$, the free abelian group on the vertices
of~$G$.  Think of~$D$ as an assignment of~$D(v)$ dollars to each vertex~$v$.  If
the integer~$D(v)$ is negative, then~$v$ is in debt.  The net amount of money on
the graph is the {\em degree},~$\deg(D):=\sum_{v\in V}D(v)$, of~$D$.  A {\em
lending} move (or {\em firing}) by a vertex~$v$ consists of~$v$ giving one
dollar to each of its neighbors and losing the corresponding amount itself.  A
{\em borrowing} move is the opposite, in which~$v$ takes a dollar from each of
its neighbors.  Two divisors are {\em linearly equivalent} if one may be
transformed into the other by a sequence of lending and borrowing moves.  The
{\em Picard group},~$\Pic(G)$, is the group of divisors modulo linear
equivalence.  Since lending and borrowing conserve total wealth,~$\Pic(G)$
is graded by degree.  Its degree~$0$ part is a finite group called the {\em
Jacobian group},~$\Jac(G)$, and there is an
isomorphism~$\Pic(G)\simeq\Z\oplus\Jac(G)$, depending on the choice of a vertex
(cf.~\eqref{eqn: Pic(G)}).

The point of the dollar game is for the vertices to cooperate and, through a
sequence of lending and borrowing moves, reach a state in which no vertex is in
debt.  If this is possible, the effect is to transform the divisor~$D$ into a
new, linearly equivalent divisor ~$E$ that is debt-free, i.e.,  such
that~$E(v)\geq 0$ for all vertices~$v$.  Such a debt-free divisor is said to be
{\em effective}.  The {\em complete linear system} of a divisor~$D$,
denoted~$|D|$, is the set of all effective divisors linearly equivalent to~$D$.
In other words,~$|D|$ is the set of all winning states for the dollar game
starting with the initial distribution of wealth prescribed by~$D$.

The purpose of this paper is to answer the question: What is the cardinality of
the complete linear system~$|D|$ for each divisor~$D$?  In other words, how
many winning states are there for each dollar game on~$G$? 
This question was proposed by Haase, Musiker, and Yu at the end of their study
\cite{Haase} of linear systems on tropical curves. 
We know of no systematic study of this question prior to the work we present
here.  
To answer it, we first use the
isomorphism~$\Pic(G)\simeq\Z\oplus\Jac(G)$ to partition the collection of all
effective divisors on~$G$ into sets~$\E_{[D]}$, one for each~$[D]\in\Jac(G)$.
Let~$\lambda_{[D]}(k)$ be the number of divisors in~$\E_{[D]}$ with degree~$k$,
and let~$\Lambda_{[D]}(z):=\sum_{k\geq0}\lambda_{[D]}(k)z^k$ be its generating
function.  Our aim, then, is to understand the structure of the~$\E_{[D]}$ and
use it to find closed expressions for~$\Lambda_{[D]}(z)$ for
each~$[D]\in\Jac(G)$. The following is an outline of our results:
\smallskip

\noindent$\bullet$ Section~\ref{sect: primsec} shows that each effective
divisor has a decomposition into a sum of {\em primary} and {\em secondary}
divisors for~$G$ and then uses this idea to compute a rational expression
for~$\Lambda_{[D]}(z)$ for each~$[D]\in\Jac(G)$ (Theorem~\ref{thm:
  primary-secondary} and Corollary~\ref{cor: ps-genfun}).
  Proposition~\ref{prop: ps-computation} provides an effective method for
  computing primary and secondary divisors, and hence for
  computing~$\Lambda_{[D]}(z)$.  The section ends with several examples. 
\smallskip

\noindent$\bullet$ Section~\ref{sect: poly} reinterprets the results of
Section~\ref{sect: primsec} in terms of lattice points in a rational polyhedra
cone.  Generators for the cone correspond to primary divisors and lattice points
in a fundamental parallelepiped correspond to secondary divisors;  the rational
expression for~$\Lambda_{[D]}(z)$ from Section~\ref{sect: primsec} is re-derived
using standard lattice-point counting techniques (Theorem~\ref{thm: poly} and
Proposition~\ref{prop: facet}).
\smallskip

\noindent$\bullet$ Section~\ref{sect: molien} approaches our question using
invariant theory.  By Theorem~\ref{thm: invariants}, the elements of~$\E_{[D]}$
may be regarded as a basis for the (relative) polynomial invariants of a certain
complex representation of the dual of~$\Jac(G)$.  Molien's theorem then
expresses~$\Lambda_{[D]}(z)$ in a form that is substantially different from that
given earlier (Corollary~\ref{cor: lambda-molien}).  Examples are
given at the end of the section.
\smallskip

\noindent$\bullet$ Section~\ref{sect: necklaces} applies our theory to the
specific case of the cycle graph~$C_n$ with~$n$ vertices, yielding a remarkable
connection to binary necklaces.  Let~$\neck(n,k)$ denote the set of binary
necklaces with~$n$ black beads and~$k$ white beads.
Theorem~\ref{thm: permutation invariants} sets up the relevant invariant theory,
and Corollary~\ref{cor: permutation invariants} shows that~$\lambda_{[D]}(k)$
counts the number of elements of~$\neck(n,k)$
exhibiting certain symmetry (depending on~$[D]$).  In
particular,~$\lambda_{[0]}(k)$ is the total number of binary necklaces
with~$n$ black beads and~$k$ white beads.  Theorem~\ref{thm: necklace bijection}
gives a combinatorial bijection between the divisors of degree~$k$
in~$\E_{[D]}$ and~$\neck(n,k)$ for each~$[D]$ whenever~$n$ and~$k$ are
relatively prime. For further work, motivated by these results, see
\cite{Oh}. 
\smallskip

\noindent$\bullet$ Section~\ref{sect: m-matrices} generalizes the work of the
previous sections to~$M$-matrices.   These matrices, defined by certain
positivity conditions, allow one to extend much of the divisor theory of
graphs to a broader context (\cite{Gabrielov}, \cite{Guzman}).  In
Section~\ref{sect: m-matrices} we show how each~$M$-matrix gives rise to a
family of matrices---each serving the role of the Laplacian matrix and allowing our
results to be extended to this broader context.  As examples, we discuss two
particular cases from \cite{BKR}: Cartan matrices for crystallographic root
systems and McKay-Cartan matrices for faithful complex representations of
arbitrary finite groups.  
\smallskip

\noindent$\bullet$ Section~\ref{sect: further} suggests directions for further
work.


\subsection*{Acknowledgments} This work
was partially supported by a Reed College Science Research Fellowship and by the
Reed College Summer Scholarship Fund. The first author is supported by the NSF
Graduate Research Fellowship Program under Grant No.~00074041.  We thank Gopal
Goel, Gregg Musiker, and Vic Reiner for helpful discussions.  We thank Scott
Corry and an anonymous referee for their
comments.  We would also like to acknowledge our extensive use of the
mathematical software SageMath~(\cite{Sage}) and the {\em On-line Encyclopedia
of Integer Sequences} (\cite{oeis}).

\section{Divisor theory preliminaries}\label{sect: prelims}  

Let~$G=(V,\mathcal{E})$ be a connected, undirected multigraph with finite vertex
set~$V$ and finite edge multiset~$\mathcal{E}$.   Many of our constructions will
depend on fixing a vertex~$q\in V$, which we do now, once and for all.  Loops
are allowed but our results are not affected if they are removed.  We
let~$\N:=\Z_{\geq0}$ denote the natural numbers.

We recall some of the theory of divisors on graphs, referring readers unfamiliar
with this theory to~\cite{Baker} or to the textbooks~\cite{Corry}
and~\cite{Klivans}. 
A~{\em divisor} on~$G$ is an element of the free abelian group on the vertices
of~$G$,
\[
  \Div(G):=\Z V=\textstyle\left\{ \sum_{v\in V}D(v)v:D(v)\in\Z \right\}.
\]
The~{degree} of a divisor~$D$ is the sum of its
coefficients:~$\deg(D):=\sum_{v\in V}D(v)$.  For instance, if we consider $v\in
V$ as a divisor, then~$\deg(v)=1$.  We use the notation~$\deg_G(v)$ to refer to
the ordinary degree of a vertex---the number of edges incident on~$v$. The set
of divisors of degree~$k$ is denoted by~$\Div^k(G)$.  

The {\em (discrete) Laplacian operator} of~$G$ is the
function~$L\colon\Z^V\to\Z^V$ given by
\[
  L(f)(v)=\sum_{vw\in \mathcal{E}}\left( f(v)-f(w) \right)
\]
for each $f\in\Z^V$ and~$v\in V$.  The {\em divisor of a function}~$f\colon
V\to\Z$, arising by analogy from the theory of divisors on Riemann surfaces,
is then
\[
  \smalldiv(f):=\sum_{v\in V}\left( L(f)(v) \right)v\in\Div(G).
\]
The mapping~$v\mapsto\chi_v$ which sends each
vertex to its corresponding characteristic function determines an
isomorphism~$\chi\colon\Div(G)\simeq\Z^V$, and we have~$\chi\circ \smalldiv=L$,
which we use to identify~$\smalldiv$ with~$L$.

Divisors of functions are called {\em principal divisors}, and they form an
additive subgroup of~$\Div(G)$ denoted~$\Prin(G)$.  Two divisors~$D$ and~$D'$ are {\em linearly
equivalent} if~$D-D'\in\Prin(G)$, in which
case, we write~$D\sim D'$. The {\em
Picard group} of~$G$ is then the group of divisors modulo linear equivalence:
\[
  \Pic(G):=\Div(G)/\Prin(G).
\]
Since principal divisors have degree zero,~$\Pic(G)$ is graded by degree. Its
degree~$k$ part is denoted~$\Pic^k(G)$.  The degree-zero part of the Picard group
is a subgroup called the {\em Jacobian group} of~$G$:
\[
  \Jac(G):=\Pic^0(G)=\Div^0(G)/\Prin(G)\subseteq\Pic(G).
\]
We write~$[D]$ for the class of a divisor~$D$ modulo~$\Prin(G)$.  With respect
to our fixed vertex~$q$, there is an isomorphism
\begin{align}\label{eqn: Pic(G)}
  \Pic(G)&\to\Z\oplus\Jac(G)\\
  [D]&\mapsto\left(\deg(D), [D-\deg(D)q] \right).\nonumber
\end{align}

Fixing an ordering~$v_1,\dots,v_n$ of~$V$ determines a
basis for~$\Div(G)$ and a corresponding dual basis for~$\Z^V$, allowing us to
identify both spaces with~$\Z^n$.  Thus,~$D\in\Div(G)$ is identified
with~$(D(v_1),\dots,D(v_n))$, and for any~$v\in V$, we may refer to the~$v$-th
coordinate of a vector in~$\Z^n$.  With respect to the chosen bases,~$\smalldiv$
and~$L$ are represented by the~$n\times n$ {\em Laplacian matrix}, which we also
denote by~$L$.  This matrix is given by
\[
  L=\mathrm{Deg}(G)-A
\]
where $\mathrm{Deg}(G)=\diag(\deg_G(v_1),\dots,\deg_G(v_n))$
and~$A$ is the adjacency matrix for~$G$ with~$i,j$-th entry
equal to the number of edges connecting~$v_i$ to~$v_j$. The matrix~$L$ is symmetric
since~$G$ is undirected.  We then have the
isomorphism
\begin{align*}
  \Pic(G)&\simeq \cok(L)=\Z^n/\im_{\Z}(L)\\
  \textstyle [\sum_{i=1}^{n}a_iv_i]&\mapsto(a_1,\dots,a_n)+\im_{\Z}(L).
\end{align*}
The {\em reduced Laplacian} matrix for~$G$ with
respect to~$q$ is the~$(n-1)\times(n-1)$ matrix~$\tL$ formed by removing the row
and column corresponding to~$q$ from~$L$.  There is an isomorphism
\begin{align}\label{eqn: Jac(G)}
  \Jac(G)&\simeq\Z^{n-1}/\im_{\Z}(\tL)\\
  [D]&\to D|_{q=0}\nonumber
\end{align}
where~$D|_{q=0}:=\sum_{v\in V\setminus\left\{ q \right\}}D(v)v$.  The inverse
sends the class of the~$v$-th standard basis vector
in~$\Z^{n-1}/\im_{\Z}(\tL)$ to~$[v-q]$ for each~$v\neq q$.
Isomorphisms~\ref{eqn: Pic(G)} and~\ref{eqn: Jac(G)} combine to say
that for~$D,D'\in\Div(G)$,
\[
  D\sim D'
  \quad\Longleftrightarrow\quad 
  \left(
  \deg(D)=\deg(D')
  \quad\text{and}\quad
  D|_{q=0}=D'|_{q=0}\bmod\im_{\Z}(\tL)\right).
\]

The kernel of the Laplacian matrix is the set of constant vectors, and the
reduced Laplacian has full rank~$n-1$.  By the matrix-tree
theorem, the number of spanning trees of~$G$ is~$\det(\tL)$, and thus
by~$\eqref{eqn: Jac(G)}$, it is also the order of~$\Jac(G)$.  We adopt the
following notation: for each~$v\in V$, let
\[
\ord_q(v):= \text{order of~$[v-q]\in\Jac(G)$}.
\]
In particular,~$\ord_q(q)=1$.

We now describe a standard set of representatives for the elements of~$\Jac(G)$.
A {\em set firing} by a subset $W\subseteq V$ on a divisor~$D$ produces a new
divisor~$D'=D-\smalldiv(\chi_W)$ where~$\chi_W$ is the characteristic function
of~$W$.  Having fixed an ordering of the vertices, we identify~$W$ with
a~$0$\,-$1$ vector in~$\Z^n$, and we have~$D'=D-LW$ where~$L$ is the Laplacian matrix.  
A {\em reverse firing} would instead produce the divisor~$D+LW$.  Thus, two
divisors are linearly equivalent if and only if they differ by a sequence of set
firings and reverse firings.

Firing a set~$W$ is {\em legal} if~$D'(w)\geq0$ for all~$w\in W$.
The divisor~$D$ is~{\em $q$-reduced} if
\begin{enumerate}
  \item[(i)] $D(v)\geq 0$ for all~$v\in V\setminus\left\{ q \right\}$, and 
  \item[(ii)]  $D$ has no legal set firing by a nonempty set $W\subseteq V\setminus\left\{ q \right\}$.
\end{enumerate}
It turns out that each divisor is
linearly equivalent to a unique~$q$-reduced divisor. Thus, the~$q$-reduced
divisors of degree~$0$ form a set of representatives for the elements
of~$\Jac(G)$.  There is an efficient algorithm (Dhar's algorithm)
for finding the~$q$-reduced representative of any divisor class.  If~$D$ is
$q$-reduced, then
letting~$\deg(D|_{q=0}):=\sum_{v\in V\setminus\left\{ q \right\}}D(v)$ we have
\[
  0\leq\deg(D|_{q=0})\leq |\mathcal{E}|-|V|+1.
\]
Therefore, searching through all divisors~$D$ of degree~$0$ satisfying the above
bound provides a fairly efficient means of calculating~$\Jac(G)$.  (For an
improvement, see~\cite{Shokrieh}.)

\subsection{Partitioning effective divisors}
A divisor~$E$ is {\em effective} if~$E(v)\geq0$ for all~$v\in V$, in which case
we write~$E\geq0$.  The {\em complete linear system} of a divisor~$D$ is its set
of linearly equivalent effective divisors:
\[
  |D| := \left\{ E\in\Div(G): E\geq 0\text{ and } E\sim D \right\}.
\]
Note that~$|D|$ depends only on the divisor class of~$D$.  Also, since linearly
equivalent divisors have the same degree,~$|D|$ is finite.

For each~$[D]\in\Jac(G)$, define
\[
  \E_{[D]}:=\cup_{k\geq0}|D+kq|=\left\{ E\in\Div(G):E\geq0\text{ and }
  E-\deg(E)q\sim D \right\}.
\]
The~$\E_{[D]}$ partition the set of effective divisors as~$D$ runs over a set
of representatives for~$\Jac(G)$.  The collection~$\E_{[0]}$ is a commutative monoid, and
it acts on each~$\E_{[D]}$ via addition: $\E_{[0]}+\E_{[D]}=\E_{[D]}$.
Note that~$\E_{[D]}$ depends on~$q$.\footnote{For~$q'\in V$, writing~$D+kq =
  D+kq'+k(q-q')$ shows the dependence is ``periodic'' with period equal to the
  order of~$[q-q']\in\Jac(G)$.}

\begin{defn}
  The~$\lambda$-sequence for~$[D]\in\Jac(G)$ is the
  sequence with~$k$-th term
  \[
    \lambda_{[D]}(k):=\#|D+kq|.
  \]
  (It does not depend on the choice of representative of the
  class~$[D]$.)  The generating function for the~$\lambda$-sequence is
  \[
    \Lambda_{[D]}(z):=\sum_{k\geq0}\lambda_{[D]}(k)z^k.
  \]
\end{defn}
Our main goal is to find closed expressions for~$\Lambda_{[D]}$ for
each~$[D]\in\Jac(G)$ and thus determine the cardinality of~$|F|$ for
all~$F\in\Div(G)$.

\section{Primary and secondary divisors}\label{sect: primsec}
In this section, we compute $\Lambda_{[D]}$ using \emph{primary} and
\emph{secondary} divisors, defined as part of the following theorem. 

\Needspace{3\baselineskip}
\begin{thm}\label{thm: primary-secondary}\ 
\begin{enumerate}[label=\rm{(\arabic*)},leftmargin=*]
  \item\label{ps-1} {\rm (Existence)} There exists a finite subset
    $\primary\subset\E_{[0]}$ and for each $[D]\in\Jac(G)$, a finite
    subset $\secondary_{[D]}\subset\E_{[D]}$ such
    that each $E\in\E_{[D]}$ can be written uniquely as
\[
E = F + \sum_{P\in\primary}a_P P
\]
with $F\in\secondary_{[D]}$ and~$a_P\in\N$ for all~$P\in\primary$. The set
$\primary$ is called a set of {\em primary divisors} for~$G$,  and
$\secondary_{[D]}$ is called the set of {\em $[D]$-secondary} divisors with
respect to~$\primary$.
\item\label{ps-2} {\rm (Uniqueness)} Sets~$\primary$
  and~$\{\secondary_{[D]}\}_{[D]\in\Jac(G)}$ satisfy part~\ref{ps-1} if and only
  if
  \[
    \primary=\left\{\tl_v v: v\in V\right\}
    \quad\text{and}\quad
    \secondary_{[D]}=\{E\in\E_{[D]}:E(v)<\tl_v \mbox{ for all $v\in
    V$}\},
  \]
  where~$\tl_v$ is a positive multiple of~$\ord_{q}(v)$ for all~$v\in
  V$. In particular, taking~$\ell_v=\ord_q(v)$ for all~$v\in V$ produces the set
  of primary divisors of smallest degree and corresponding sets of secondary
  divisors with minimal cardinality.
\end{enumerate}
\end{thm}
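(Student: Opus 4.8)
The plan is to recast everything through the group homomorphism $\phi\colon\Div(G)\to\Jac(G)$, $\phi(E)=[E-\deg(E)q]$, under which $\E_{[D]}=\{E\geq 0:\phi(E)=[D]\}$ and $\E_{[0]}=\ker\phi\cap\{E\geq 0\}$. Since $\phi(\ell v)=\ell[v-q]$, a single-vertex divisor $\ell v$ lies in $\E_{[0]}$ precisely when $\ell$ is a positive multiple of $\ord_q(v)$, which identifies the candidate primary divisors intrinsically. For existence and the ``if'' direction of part~\ref{ps-2}, fix $\ell_v$ a positive multiple of $\ord_q(v)$ for each $v$ and put $\primary=\{\ell_v v\}$ and $\secondary_{[D]}=\{E\in\E_{[D]}:E(v)<\ell_v\ \forall v\}$. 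Because each $\ell_v v$ is supported on a single vertex, the coordinates decouple: writing $E(v)=a_v\ell_v+r_v$ with $0\leq r_v<\ell_v$ (the division algorithm) yields $E=\sum_v a_v(\ell_v v)+F$ with $F=\sum_v r_v v$, and $\phi(F)=\phi(E)=[D]$ since we subtracted elements of $\ker\phi$. Hence $F\in\secondary_{[D]}$, and the decomposition is unique because in each coordinate $v$ the identity $E(v)=F(v)+a_{\ell_v v}\ell_v$ with $0\leq F(v)<\ell_v$ determines $F(v)$ and $a_{\ell_v v}$ uniquely.

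The main work is the ``only if'' direction, and the key tool will be a multivariate generating-function identity. Write $\mathbf{x}^E=\prod_{v\in V}x_v^{E(v)}$ and, for a set $S$ of divisors, $[S]=\sum_{E\in S}\mathbf{x}^E$. Given any pair $(\primary,\{\secondary_{[D]}\})$ satisfying part~\ref{ps-1}, the bijection $\secondary_{[D]}\times\N^{\primary}\to\E_{[D]}$ translates into
\[
  [\E_{[D]}]=[\secondary_{[D]}]\prod_{P\in\primary}\frac{1}{1-\mathbf{x}^{P}},
\]
and summing over the finitely many $[D]\in\Jac(G)$, using that the $\E_{[D]}$ partition all effective divisors, gives $\prod_{v}(1-x_v)^{-1}=Q(\mathbf{x})\prod_{P}(1-\mathbf{x}^{P})^{-1}$, where $Q:=\sum_{[D]}[\secondary_{[D]}]$ is a polynomial. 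Clearing denominators yields the polynomial identity
\[
  \prod_{P\in\primary}(1-\mathbf{x}^{P})=Q(\mathbf{x})\prod_{v\in V}(1-x_v).
\]

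From this identity I would extract the structure of $\primary$ in two steps. First, specializing all $x_v\mapsto z$ gives $\prod_P(1-z^{\deg P})=Q(z,\dots,z)(1-z)^{|V|}$; since each $1-z^{\deg P}$ vanishes to order exactly $1$ at $z=1$ (here $\deg P\geq 1$, as a zero primary divisor would have an unconstrained coefficient and a nonzero effective divisor has positive degree) while $Q(1,\dots,1)=\sum_{[D]}\#\secondary_{[D]}>0$, comparing orders of vanishing forces $|\primary|=|V|$. Second, working in the UFD $\C[\mathbf{x}]$, each irreducible $1-x_v$ divides the left-hand side, hence divides some factor $1-\mathbf{x}^{P}$; substituting $x_v=1$ shows this occurs iff $P$ is supported only on $v$, i.e.\ $P$ is a positive multiple of $v$. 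Thus every vertex is matched to a distinct primary divisor that is a multiple of it, and since $|\primary|=|V|$ there is exactly one per vertex, so $\primary=\{\ell_v v\}$. Membership $\primary\subset\E_{[0]}$ then forces each $\ell_v$ to be a positive multiple of $\ord_q(v)$.

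Finally I would pin down the secondary sets and the minimality claims. With $\primary$ determined, an element $F\in\secondary_{[D]}$ cannot equal $P+E'$ with $P\in\primary$ and $E'\geq 0$ (else $F$ would admit two distinct decompositions), so $\secondary_{[D]}$ consists exactly of the $\primary$-irreducible elements of $\E_{[D]}$, which by the single-vertex support are precisely $\{E\in\E_{[D]}:E(v)<\ell_v\ \forall v\}$; the reverse inclusion follows by decomposing such an $F$ and observing that every coefficient must vanish. For the last sentence, $\deg(\ell_v v)=\ell_v$ is minimized at $\ell_v=\ord_q(v)$, and dividing the specialized identity by $(1-z)^{|V|}$ and setting $z=1$ gives $\sum_{[D]}\#\secondary_{[D]}=\prod_v\ell_v$, which is likewise minimized termwise at $\ell_v=\ord_q(v)$ since the box defining each $\secondary_{[D]}$ only shrinks as the $\ell_v$ decrease. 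The principal obstacle throughout is the ``only if'' direction: converting the purely combinatorial uniqueness hypothesis into the polynomial identity, and then reading off both $|\primary|=|V|$ and the single-vertex support via unique factorization.
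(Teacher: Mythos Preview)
Your argument is correct, and the ``if'' direction matches the paper's proof (division algorithm in each coordinate). For the ``only if'' direction, however, you take a genuinely different route. The paper argues combinatorially: for each vertex $v$ it picks the smallest $m_v$ with $m_v\ord_q(v)\,v\notin\secondary_{[0]}$, observes that any primary--secondary decomposition of this single-vertex divisor must itself be supported on $v$, and deduces $m_v\ord_q(v)\,v\in\primary$; linear independence of $\primary$ (forced by uniqueness) then rules out further primary divisors. Your approach instead passes through the multivariate generating-function identity $\prod_{P}(1-\mathbf{x}^P)=Q(\mathbf{x})\prod_v(1-x_v)$, reads off $|\primary|=|V|$ from the order of vanishing at $z=1$, and then uses that $1-x_v$ is prime in the UFD $\C[\mathbf{x}]$ to force each primary divisor to be supported on a single vertex. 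The paper's argument is more elementary and self-contained; yours is more algebraic and has the pleasant side effect of immediately yielding $\sum_{[D]}\#\secondary_{[D]}=\prod_v\ell_v$, a fact the paper establishes separately (in Proposition~\ref{prop: ps-computation} and Remark~\ref{remark: ps-comp}). Both extend without change to the $M$-matrix setting of Section~\ref{sect: m-matrices}.
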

\begin{proof} To prove part~\ref{ps-1}, for each~$v\in V$, let~$\tl_v$ be a
  positive multiple of~$\ord_{q}(v)$, and define~$\primary$ and
  each~$\secondary_{[D]}$ as in part~\ref{ps-2} of the theorem.
  Given~$E\in\E_{[D]}$, for each~$v\in V$, let~$k_v$ be the largest integer such
  that~$E(v)-k_v\tl_v\geq0$, and define~$F:=E-\sum_{v\in
  V}k_v\tl_vv\in\secondary_{[D]}$.  Then $E=F+\sum_{v\in V}k_v\tl_vv$ is a
  decomposition as required in part~\ref{ps-1}.   For uniqueness of this
  decomposition, suppose $E = F' + \sum_{v\in V}k_v'v$ for
  some~$F'\in\secondary_{[D]}$ and~$k'_v\in\N$.  Then for each~$v\in V$, we
  have~$0\leq F(v)=E(v)- k_v\tl_v<\tl_v$ and~$0\leq F'(v)=E(v)-k'_v\tl_v<\tl_v$.
  Subtracting these inequalities yields $-\tl_v<(k'_v-k_v)\tl_v<\tl_v$.  It
  follows that~$k_v=k'_v$ for all~$v$ and~$F=F'$.

  We have shown that if~$\primary$ and~$\secondary_{[D]}$ have the form
  displayed in~$\ref{ps-2}$, then they serve as sets of primary and secondary
  divisors, i.e., they satisfy the conditions in part~\ref{ps-1}.  To show that
  necessity of this form and thus finish the proof of part~\ref{ps-2},
  let~$\primary$ and~$\{\secondary_{[D]}\}_{[D]\in\Jac(G)}$ be any sets of
  primary and~$[D]$-secondary divisors. Since~$\secondary_{[0]}$ is finite, for
  each~$v\in V$, there is a smallest natural number~$m_v$ such
  that~$m_v\ord(v)v\notin\secondary_{[0]}$.  Consider the primary-secondary
  decomposition~$m_v\ord(v)v=F+\sum_{P\in\primary}a_PP$. Since the divisors on
  the right-hand side are effective and all~$a_P$ are nonnegative, considering
  coefficients on both sides, it follows that decomposition takes the
  form~$m_v\ord(v)v=av+bv$ for some~$a,b\in\N$ such that $av\in\secondary_{[0]}$
  and $bv\in\primary$. By definition of~$m_v$, we must have~$b>0$.
  Since~$\secondary_{[0]}$ and~$\primary$ are subsets of~$\E_{[0]}$, we
  have~$av\sim aq$ and~$bv\sim bq$.  Therefore,~$a=a'\ord(v)$ and~$b=b'\ord(v)$
  for some~$a',b'\in\N$.  If~$a'\neq0$, then~$b'<m_v$, which
  implies~$b'\ord(v)v\in\secondary_{[0]}$ by definition of~$m_v$.  However, that
  is impossible since the uniqueness of decompositions described in
  part~\ref{ps-1} implies~$\secondary_{[0]}$ and~$\primary$ are disjoint.
  Defining~$\tl_v:=m_v\ord(v)$, it follows that~$\tl_v v\in\primary$ for
  all~$v$.  However, again by uniqueness of decompositions, the elements
  of~$\primary$ must be linearly independent over~$\Z$, which implies there are
  no other primary divisors.  So~$\primary=\left\{ \tl_v:v\in V \right\}$, as
  claimed, and it is then straightforward to show that~$\secondary_{[D]}$ must
  have the form stated in~\ref{ps-2} for each~$[D]\in\Jac(G)$.
\end{proof}

\begin{cor}\label{cor: ps-genfun} Fix primary and secondary divisors as in Theorem~\ref{thm:
  primary-secondary}. For each~$[D]\in\Jac(G)$,
  \[
    \Lambda_{[D]}(z)=\frac{S(z)}{\prod_{v\in V}\left(1-z^{\tl_v}\right)}
  \]
  where
  \[
    S(z)=\sum_{F\in\secondary_{[D]}}z^{\deg(F)}.
  \]
\end{cor}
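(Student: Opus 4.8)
The plan is to use the primary-secondary decomposition from Theorem~\ref{thm: primary-secondary}\ref{ps-1} to set up a bijection that converts the counting problem into a product of geometric series. By definition, $\Lambda_{[D]}(z)=\sum_{k\geq0}\lambda_{[D]}(k)z^k$ where $\lambda_{[D]}(k)=\#|D+kq|$ counts effective divisors of degree $k$ in $\E_{[D]}$. Since $\E_{[D]}=\cup_{k\geq0}|D+kq|$ and the degree of an effective divisor determines which graded piece it lies in, I can rewrite the generating function as a sum over the whole partition class, weighting each effective divisor by $z$ raised to its degree:
\[
  \Lambda_{[D]}(z)=\sum_{E\in\E_{[D]}}z^{\deg(E)}.
\]
This reindexing is the conceptual heart of the argument and should be stated explicitly at the outset.

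Next I would invoke the unique decomposition $E=F+\sum_{P\in\primary}a_PP$ with $F\in\secondary_{[D]}$ and $a_P\in\N$. Writing $\primary=\{\tl_v v:v\in V\}$, each $E\in\E_{[D]}$ corresponds bijectively to a pair consisting of a secondary divisor $F$ and a tuple of nonnegative exponents $(a_v)_{v\in V}$. Since degree is additive and $\deg(\tl_v v)=\tl_v$, we have $\deg(E)=\deg(F)+\sum_{v\in V}a_v\tl_v$. The bijectivity guaranteed by part~\ref{ps-1} lets me factor the sum over $\E_{[D]}$ as a product: the sum over choices of $F$ contributes $\sum_{F\in\secondary_{[D]}}z^{\deg(F)}=S(z)$, and the independent sum over each exponent $a_v\in\N$ contributes $\sum_{a_v\geq0}z^{a_v\tl_v}=1/(1-z^{\tl_v})$. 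Multiplying these gives
\[
  \Lambda_{[D]}(z)=S(z)\prod_{v\in V}\frac{1}{1-z^{\tl_v}},
\]
which is the claimed formula.

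The one point requiring care is justifying the interchange of summation and the factorization into an infinite product, i.e.\ the manipulation of formal power series (or, equivalently, convergence for $|z|<1$). This is routine because $\secondary_{[D]}$ is finite, so $S(z)$ is an honest polynomial, and each geometric factor is a well-defined power series; the unique decomposition ensures that collecting terms by total degree reproduces exactly $\lambda_{[D]}(k)$ with no overcounting or omission. I do not anticipate a genuine obstacle here—the substance of the result is entirely contained in the bijection of Theorem~\ref{thm: primary-secondary}, and the corollary is essentially the generating-function shadow of that bijection. The main thing to verify carefully is simply that every effective divisor in the class is accounted for exactly once, which part~\ref{ps-1} supplies directly.
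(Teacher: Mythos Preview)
Your proposal is correct and follows essentially the same approach as the paper: both use the unique primary--secondary decomposition from Theorem~\ref{thm: primary-secondary} to factor the generating function into the polynomial $S(z)$ times a product of geometric series. The only cosmetic difference is that the paper first introduces a multivariate generating function $\sigma_D(x)=\sum_{E\in\E_{[D]}}x^E$ in indeterminates $\{x_v\}_{v\in V}$, factors it, and then specializes $x_v\mapsto z$, whereas you work directly with the univariate series; this amounts to the same argument.
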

\begin{proof} Introduce indeterminates~$\left\{ x_v \right\}_{v\in V}$, and
  identify each effective divisor~$E$ with a monomial~$x^E:=\prod_{v\in
  V}x_v^{E(v)}$. Define
  \[
    \sigma_D(x)=\sum_{E\in\E_{[D]}}x^{E}.
  \]
  By Theorem~\ref{thm: primary-secondary}, we may uniquely write
  \[
    x^E=x^F\cdot\prod_{P\in\primary}x^{a_PP} =x^F\cdot\prod_{v\in V}x_v^{a_v\tl_v}
  \]
  for some~$F\in\secondary_{[D]}$ and~$a_v\geq0$.
  Then
\begin{align*}
\sigma_D(x)&=\sum_{F\in\secondary_{[D]}}x^F\sum_{a\in \N^V}\prod_{v\in
V}x_v^{a_v\tl_v}
=\left(\sum_{F\in\secondary_{[D]}}x^F\right)\prod_{v\in
V}\left(1+x_v^{\tl_v}+x_v^{2\tl_v}
+\dots\right)\\[8pt]
&=\left(\sum_{F\in\secondary_{[D]}}x^F\right)\prod_{v\in
V}\frac{1}{1-x_v^{\tl_v}}.
\end{align*}
Now note that $\Lambda_{[D]}(z)=\sigma_D(z,z,\dots,z)$ to conclude the proof.
\end{proof}

\begin{remark}\label{remark: ps-comp}  Denote the numerator~$S$ in Corollary~\ref{cor: ps-genfun}
  by~$S_{[D]}(z)$ to indicate its dependence on $[D]\in\Jac(G)$.  Since
  the~$\E_{[D]}$ partition the set of effective divisors, it follows
  that~$\sum_{[D]\in\Jac(G)}\Lambda_{[D]}(z)=1/(1-z)^n$, and hence,
  \[
    \sum_{[D]\in\Jac(G)}S_{[D]}(z)=\frac{\prod_{v\in V}(1-z^{\tl_v})}{(1-z)^n}=\prod_{v\in
      V}(1+z+z^2+\dots+z^{\tl_v-1}).
  \]
\end{remark}

We now describe how to easily compute primary and secondary divisors. 
Recall that we have fixed an ordering of the vertices of~$G$ to identify~$\Div(G)$
with~$\Z^n$.  Fix primary and secondary divisors as in
Theorem~\ref{thm: primary-secondary}~\ref{ps-2}, and consider the natural projection
\[
  \pi\colon \Div(G)=\Z^n\to\Z^n/\prod_{v\in V}\tl_v\Z.
\]
A {\em standard representative} of an element~$\overline{D}\in\Z^n/\prod_{v\in
V}\tl_v\Z$ is a divisor~$E\in\Div(G)$ such that~$\pi(E)=\overline{D}$ and~$0\leq
E(v)<\tl_v$ for all~$v$.  

Assume vertex~$q$ appears last in the ordering so that
\[
  \im_{\Z}\tL\times\Z\subseteq\Z^{n-1}\times\Z=\Z^n=\Div(G).
\]
For each~$D\in\Div(G)$, define
\[
  H_{[D]}:=\pi(D+(\im_{\Z}\tL\times\Z))\subseteq\Z^n/\prod_{v\in V}\tl_v\Z.
\]
\begin{prop}\label{prop: ps-computation} Let~$\primary=\left\{ \tl_v v:v\in V
  \right\}$ and~$\secondary_{[D]}$ for each~$[D]\in\Jac(G)$ be as in
  Theorem~\ref{thm: primary-secondary}~\ref{ps-2}.
  \begin{enumerate}[label=\rm{(\arabic*)},leftmargin=*]
    \item\label{ps-comp1}  Let~$\tL^{-1}$ be the inverse of the reduced Laplacian
      over~$\Q$. Then, for each~$v\neq q$, the integer~$\ord_{q}(v)$ is the least common multiple
      of the denominators of the (reduced) fractions in the~$v$-th column
      of~$\tL^{-1}$.
    \item\label{ps-comp2} For each~$[D]\in\Jac(G)$, there is a bijection of sets
      \begin{align*}
	\secondary_{[D]}&\to H_{[D]}\\
	E&\mapsto\pi(E|_{q=0},\deg(E)),
      \end{align*}
      and thus~$\secondary_{[D]}$ is exactly a set of standard representatives
      for~$H_{[D]}$.
    \item\label{ps-comp3} For each~$[D]\in\Jac(G)$,
      \[
	|\secondary_{[D]}||\Jac(G)|=\prod_{v\in V}\tl_v.
      \]
  \end{enumerate}
\end{prop}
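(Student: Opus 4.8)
The three parts are essentially independent, so I would dispatch them in order, leaning on the isomorphism~\eqref{eqn: Jac(G)} and the linear-equivalence criterion recorded just after it.

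For part~\ref{ps-comp1}, the plan is to turn the order of $[v-q]$ into a statement about clearing denominators. By~\eqref{eqn: Jac(G)}, the class $[v-q]\in\Jac(G)$ corresponds to the residue $\bar e_v$ of the $v$-th standard basis vector in $\Z^{n-1}/\im_{\Z}(\tL)$, so $\ord_q(v)$ is the least $m>0$ with $m\,e_v\in\im_{\Z}(\tL)=\tL\,\Z^{n-1}$. Since $\tL$ has full rank $n-1$ and hence is invertible over $\Q$, this holds exactly when $m\,\tL^{-1}e_v\in\Z^{n-1}$; as $\tL^{-1}e_v$ is the $v$-th column of $\tL^{-1}$, the smallest such $m$ is the least common multiple of the denominators of its reduced entries. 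This gives the claim with no further work.

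For part~\ref{ps-comp2}, I would first rewrite membership in $\E_{[D]}$ as a congruence: $E\in\E_{[D]}$ iff $E\geq0$ and $E|_{q=0}\equiv D|_{q=0}\pmod{\im_{\Z}\tL}$, and $\secondary_{[D]}$ adds the box constraint $0\le E(v)<\tl_v$ for all $v$. The key arithmetic input is that each $\tl_v$ is a multiple of $\ord_q(v)$, so $\tl_v e_v\in\im_{\Z}\tL$ for every $v\neq q$. Using this I would check the map is well defined into $H_{[D]}$: writing $E|_{q=0}=D|_{q=0}+\tL x$ for some $x\in\Z^{n-1}$, one has $(E|_{q=0},\deg E)-D=(\tL x,\ \deg E-D(q))\in\im_{\Z}\tL\times\Z$, so the image lies in $H_{[D]}$. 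Injectivity is immediate from the box bounds: if the images agree then $(E|_{q=0},\deg E)$ and $(E'|_{q=0},\deg E')$ differ coordinatewise by multiples of $\tl_v$, which forces $E(v)=E'(v)$ for $v\neq q$ and then $\deg E-\deg E'=E(q)-E'(q)\in\tl_q\Z$ forces $E(q)=E'(q)$. For surjectivity I would build the preimage explicitly: given a class in $H_{[D]}$, reduce each coordinate $v\neq q$ into $[0,\tl_v)$ --- legitimate precisely because $\tl_v e_v\in\im_{\Z}\tL$ leaves the congruence $E|_{q=0}\equiv D|_{q=0}$ intact --- and then pick the unique $E(q)\in[0,\tl_q)$ making $\deg E$ agree in $\Z/\tl_q\Z$. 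The ``standard representatives'' description is then automatic, since every such $E$ already satisfies $0\le E(v)<\tl_v$.

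For part~\ref{ps-comp3}, I would combine part~\ref{ps-comp2} with a counting argument. The bijection gives $|\secondary_{[D]}|=|H_{[D]}|$, and since $H_{[D]}=\pi(D)+H_{[0]}$ is a coset of the subgroup $H_{[0]}=\pi(\im_{\Z}\tL\times\Z)$ of $\Z^n/\prod_v\tl_v\Z$, all the $H_{[D]}$ share the common cardinality $|H_{[0]}|$. On the other hand, the fundamental box $\{E:0\le E(v)<\tl_v\ \forall v\}$ consists of effective divisors, so the partition of the effective divisors by the $\E_{[D]}$ restricts to a partition of the box into the $|\Jac(G)|$ sets $\secondary_{[D]}$, whence $\sum_{[D]}|\secondary_{[D]}|=\prod_v\tl_v$. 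Equating the two counts yields $|\Jac(G)|\cdot|\secondary_{[D]}|=\prod_v\tl_v$. I expect the main obstacle to be the well-definedness and surjectivity in part~\ref{ps-comp2}: one must carefully track how the divisibility $\ord_q(v)\mid\tl_v$ interacts with both the congruence defining $\E_{[D]}$ and the degree coordinate, since the map replaces the $q$-entry of $E$ by $\deg(E)$ rather than leaving it fixed.
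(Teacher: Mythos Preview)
Your proposal is correct and, for parts~\ref{ps-comp1} and~\ref{ps-comp2}, follows essentially the paper's argument---indeed, you are somewhat more careful in part~\ref{ps-comp2}, verifying the stated map $E\mapsto\pi(E|_{q=0},\deg(E))$ directly, whereas the paper checks the equivalent fact that $\secondary_{[D]}$ coincides with the standard representatives of~$H_{[D]}$ under the plain projection $E\mapsto\pi(E)$.

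For part~\ref{ps-comp3} there is a genuine, if minor, difference in strategy. The paper constructs a surjective group homomorphism $\Z^n/\prod_v\tl_v\Z\to\Jac(G)$ (sending the class of the $v$-th standard basis vector to $[v-q]$), identifies its kernel as~$H_{[0]}$, and reads off $|H_{[0]}|\cdot|\Jac(G)|=\prod_v\tl_v$ from the first isomorphism theorem; the translation $\pi(E)\mapsto\pi(D+E)$ then transports this to arbitrary~$[D]$. You instead use the combinatorial fact that the fundamental box $\{E:0\le E(v)<\tl_v\}$ is partitioned by the sets~$\secondary_{[D]}$ as $[D]$ ranges over~$\Jac(G)$, together with the coset observation $H_{[D]}=\pi(D)+H_{[0]}$ to force equal cardinalities. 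Both arguments are short and correct; the paper's is slightly more algebraic, yours slightly more combinatorial, and they are essentially dual to one another.
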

\begin{proof} First note that~$\tL$ has rank~$n-1$, and thus has an
  inverse~$\tL^{-1}$ over~$\Q$.  By~\eqref{eqn: Jac(G)} of Section~\ref{sect:
  prelims}, the order of~$[v-q]\in\Jac(G)$ is the least positive integer~$k$
  such that~$kv\in\im_{\Z}\tL$.  Therefore,~$\ord_{q}(v)$ is the least positive
  integer~$k$ such that~$\tL^{-1}(kv)\in\Z^{n-1}$. Part~\ref{ps-comp1} follows.

  Part~\ref{ps-comp2} is immediate: a divisor~$E$ is a standard representative
  for an element in~$H_{[D]}$ if and only if $E|_{q=0}=D|_{q=0} \bmod\im_{\Z}\tL$ and
  $0\leq E(v)<\tl_v$ for all~$v\in V$, which is exactly the requirement for
  being an element of~$\secondary_{[D]}$.

  Now consider part~\ref{ps-comp3}.
  Since~$\tl_v[v-q]=[0]\in\Jac(G)$ for all~$v$, there is a surjection
  \[
    \textstyle \Z^n/\prod_{v\in V}\tl_v\Z\to\Jac(G)\simeq\Z^{n-1}/\im_{\Z}\tL
  \]
  which sends the class of the~$v$-th standard basis vector to~$[v-q]$.
  Its kernel is~$H_{[0]}$.  So by part~\ref{ps-comp2}, we have
  $|\secondary_{[0]}||\Jac(G)|=\prod_{v\in V}\tl_v$.  However, for
  each~$[D]\in\Jac(G)$, there is a
  well-defined bijection
  \begin{align*}
    H_{[0]}&\to H_{[D]}\\
    \pi(E)&\mapsto\pi(D+E).
  \end{align*}
  So~$|\secondary_{[D]}|=|\secondary_{[0]}|$, and part~\ref{ps-comp3} follows.
\end{proof}
\begin{remark}\label{remark: method} (Computation of primary and secondary
  divisors) To summarize the above: in order to compute a set of primary
  divisors, use Proposition~\ref{prop: ps-computation} to compute
  each~$\ord_{q}(v)$ for~$v\neq q$ from the columns of~$\tL^{-1}$. Then
  take~$\primary=\{ \tl_v v \}_{v\in V}$ where the~$\tl_v$ are arbitrary
  positive multiples of the corresponding~$\ord_{q}(v)$.  In order to minimize
  the number of secondary divisors, one would take~$\tl_v=\ord_{q}(v)$ for
  each~$v$.  In particular, this would mean~$\tl_q=\ord_{q}(q)=1$.

  Next, use part~\ref{ps-comp2} of Proposition~\ref{prop: ps-computation} to
  compute~$\secondary_{[D]}$ for each~$[D]\in\Jac(G)$. To ease the computation
  of~$\im_{\Z}\tL$, perform invertible integer column operations on~$\tL$ to
  compute its Hermite normal form~$A$. (We will always take ``Hermite normal
  form'' to mean ``column Hermite normal form''.) Then find the set~$S$ of standard
  representatives for the coset $D|_{q=0}+\im_{\Z}A$ modulo $\prod_{v\in
  V\setminus\left\{ q \right\}}\tl_v\Z$.  Finally~$\secondary_{[D]}=\left\{
  c+kq:c\in S \text{ and } 0\leq k <\tl_q\right\}$.
  \end{remark}

\Needspace{3\baselineskip}
  \subsection{Examples}  We now use the method outlined in Remark~\ref{remark:
  method} to compute~$\lambda$-sequence generating functions for several
  examples.
\Needspace{3\baselineskip}
\subsubsection{Trees}\label{example: ps-tree} If~$G$ is a tree, then~$\Jac(G)$
is trivial, and the mapping~$[D]\mapsto\deg(D)$ is an isomorphism of~$\Pic(G)$
with~$\Z$.  It follows that for any~$q\in V$,
\[
  \E_{[0]} = \cup_{k\geq 0}|kq| = \left\{ E\in\Div(G):E\geq0 \text{ and }
  \deg(E)\geq0 \right\}.
\]
So, letting~$n=|V|$, the cardinality of~$|kq|$ is the number of elements of~$\N^n$
with coordinate sum equal to~$k$. Thus,
\[
  \Lambda_{[0]}(z)=\sum_{k\geq 0}\binom{n-1+k}{k}z^k=\frac{1}{(1-z)^n},
\]
in agreement with Corollary~\ref{cor: ps-genfun}
where we take~$\tl_v=1$ for all~$v\in V$.  In that case~$\primary=V$
and~$\secondary_{[0]}=\left\{ 0 \right\}$.  

\subsubsection{Diamond graph.}\label{example: ps-diamond}  Let~$G$ be the diamond graph pictured in
Figure~\ref{fig: diamond graph}.
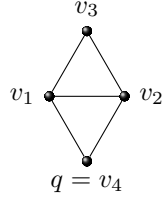
\begin{figure}[ht]
\centering
\begin{tikzpicture}[scale=0.5]
\node[ball,label=$v_3$] (v3) at (0,1.732) {};
\node[ball,label=180:$v_1$] (v1) at (-1,0) {};
\node[ball,label=0:$v_2$] (v2) at (1,0) {};
\node[ball,label=270:{$q=v_4$}] (v4) at (0,-1.732) {};
\draw (v3)--(v1)--(v4)--(v2)--(v3);
\draw (v1)--(v2);
\end{tikzpicture}
\caption{The diamond graph.}\label{fig: diamond graph}
\end{figure}
The reduced Laplacian for $G$ and its inverse are:
\[
\tL=
\left(\begin{array}{rrr}
3 & -1 & -1 \\
-1 & 3 & -1 \\
-1 & -1 & 2
\end{array}\right),
\quad
\tL^{-1}=
\left(\begin{array}{rrr}
\frac{5}{8} & \frac{3}{8} & \frac{1}{2} \\[5pt]
\frac{3}{8} & \frac{5}{8} & \frac{1}{2} \\[5pt]
\frac{1}{2} & \frac{1}{2} & 1
\end{array}\right).
\]
Taking the least common multiples of denominators in the columns of $\tL^{-1}$ gives 
\[
  (\ord_{q}(v_1),\ord_{q}(v_2),\ord_{q}(v_3),\ord_{q}(q)) =(8,8,2,1).
\]
To minimize the number of secondary divisors, we take~$\tl_v=\ord_{q}(v)$
for all~$v$.  Thus,
\[
  \primary=\left\{8v_1,8v_2,2v_3,q \right\}.
\]
By~Proposition~\ref{prop: ps-computation}~\ref{ps-comp3}, we
have~$|\secondary_{[D]}|=16$ for each~$[D]\in\Jac(G)$ since
$|\Jac(G)|=\det(\tL)=8$.

To compute~$\secondary_{[0]}$, perform invertible integer column operations on $\tL$ to reduce it to Hermite normal form:
\[
  A =
\left(\begin{array}{rrr}
1 & 0 & 0 \\
1 & 4 & 0 \\
0 & 1 & 2
\end{array}\right).
\]
Using this matrix, it is easy to find standard representatives for~$\im_{\Z}A=\im_{\Z}\tL$ 
modulo~$8\Z\times8\Z\times2\Z$.  According to Remark~\ref{remark: ps-comp}, since~$\tl_q=1$,
we then append~$0$ to each of these representatives to get
\begin{align*}
  \secondary_{[0]}=\{ &(0,0,0,0),(1,1,0,0),(2,2,0,0),(3,3,0,0),(4,4,0,0),(5,5,0,0),(6,6,0,0),(7,7,0,0)\\[8pt]
  &(0,4,1,0),(1,5,1,0),(2,6,1,0),(3,7,1,0),(4,0,1,0),(5,1,1,0),(6,2,1,0),(7,3,1,0)\}.
\end{align*}
From Corollary~\ref{cor: ps-genfun},
\begin{align*}
\Lambda_{[0]}(z)&=\frac{1+z^2+z^4+2z^5+z^6+2z^7+z^8+2z^9+z^{10}+2z^{11}+z^{12}+z^{14}}{(1-z)(1-z^2)(1-z^8)^2}\\[5pt]
&=\frac{1-z+z^2-z^3+z^4+z^5-z^6+z^7}{(1+z)^2(1+z^2)(1+z^4)(1-z)^4}\\[8pt]
&=1 + z + 3 z^{2} + 3 z^{3} + 6 z^{4} + 8 z^{5} + 12 z^{6} + 16 z^{7} + 23 z^{8}
+ 29 z^{9} + 39 z^{10} +\cdots.
\end{align*}
For instance, the six effective divisors of degree~$4$ in~$\E_{[0]}$ predicted by
the generating function are
\[
  (0,0,4,0),(0,0,2,2),(0,0,0,4),(1,1,2,0),(1,1,0,2),(2,2,0,0),
\]
which we get from~$\secondary_{[0]}$ by adding appropriate multiples of elements
of~$\primary=\{8v_1,8v_2,2v_3,q\}$.

As another example, let $D=v_1-q=(1,0,0,-1)$.  To find~$\secondary_{[D]}$, 
add~$D$ to each of the divisors in~$\secondary_{[0]}$, then take their standard
representatives as elements of~$\Z_8\times\Z_8\times\Z_2\times\Z_1$:
\begin{align*}
  \secondary_{[D]}= \{ &(1,0,0,0),(2,1,0,0),(3,2,0,0),(4,3,0,0),
    (5,4,0,0),(6,5,0,0),(7,6,0,0),(0,7,0,0)\\[8pt]
&(1,4,1,0),(2,5,1,0),(3,6,1,0),(4,7,1,0),
(5,0,1,0),(6,1,1,0),(7,2,1,0),(0,3,1,0)\}.
\end{align*}
Therefore,
\begin{align*}
\Lambda_{[D]}(z)&=\frac{z+z^3+z^4+z^5+2z^6+2z^7+2z^8+z^9+2z^{10}+z^{11}+z^{12}+z^{13}}{(1-z)(1-z^2)(1-z^8)^2}\\[5pt]
&=\frac{x(1-x+x^3)}{(1+x)^2(1+x^4)(1-x)^4}\\[8pt]
&= z + z^{2} + 3 z^{3} + 4 z^{4} + 7 z^{5} + 10 z^{6} + 15 z^{7} + 20 z^{8} + 28
z^{9} + 35 z^{10}+\cdots.
\end{align*}

\subsubsection{Cycle graphs.}\label{example: ps-cycle} Let~$C_n$ be the cycle graph on~$n$ vertices, with
vertices~$v_1,\dots,v_n$ around the cycle.  Take~$q=v_n$. It is well-known
that~$\Jac(C_n)\simeq\Z/n\Z$, with generator~$D_1:=[v_1-q]$ and such
that $D_j:=[v_{j}-q]=j[v_1-q]$ for all~$j$, where the indices are determined
modulo~$n$.  Therefore,~$\ord_{q}(v_i)=n/\gcd(i,n)$ for all~$i$.  For
convenience, take~$\tl_{v_i}=n$ for~$i=1,\dots,n-1$ and~$\tl_q=1$.  The reduced
Laplacian~$\tL$ is the~$(n-1)\times(n-1)$ tridiagonal matrix with~$2$s on the
diagonal and~$-1$s on the super and subdiagonals.  It is straightforward to
reduce~$\tL$ to its Hermite form, which is~$I_{n-1}+B_{n-1}$ where~$I_{n-1}$ is the
identity matrix and~$B_{n-1}$ is a matrix whose rows are all~$0$-vectors except for
the last row, which is the vector~$(1,2,\dots,n-1)$.  See Figure~\ref{fig:
ps-cycle} for an example.
\begin{figure}[ht]
\centering
\begin{tikzpicture}[scale=0.7]
  \draw (0,0) circle (1);
  \foreach \x in {0,1,...,4} {
    \node[ball] (\x) at (72*\x-90:1) {};
  }
  \foreach \x in {1,...,4} {
    \node at (72*\x-90:1.45) {$v_{\x}$};
  }
  \node at (270:1.45) {$q$};
  \node at (6,0) {$
  \left(\begin{array}{rrrr}
    2 & -1 & 0 & 0\\
    -1 & 2 & -1 & 0\\
    0 & -1 & 2 & -1\\
    0 & 0 & -1 & 2\\
    \end{array}\right)
    $};
  \node at (13,0) {$
  \left(\begin{array}{rrrrr}
  1 & 0 & 0 & 0\\
  0 & 1 & 0 & 0\\
  0 & 0 & 1 & 0\\
  1 & 2 & 3 & 5
  \end{array}\right)
  $};
  \node at (0,-2.3) {$C_5$};
  \node at (6,-2) {$\tL$};
  \node at (13,-2) {Hermite form for~$\tL$};
\end{tikzpicture}
\caption{The cycle graph~$C_5$ (cf.~Example~\ref{example: ps-cycle}).}\label{fig: ps-cycle}
\end{figure}
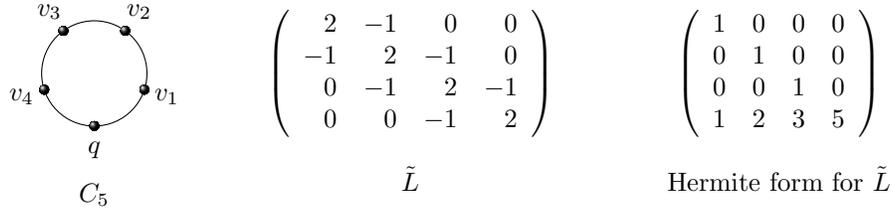
The primary divisors are~$\primary=\{nv_1,nv_2,\dots,nv_{n-1},q\}$ and for
each~$j=0,1,\dots,n-1$, the
secondary divisors for~$D_j$ are
\[
  \secondary_{[D_j]}=
  \textstyle
  \left\{ \left((a_1+j)\bmod n,a_2,\dots,a_{n-2},\sum_{i=1}^{n-2}ia_i\bmod n,0\right):0\leq
  a_i<n\text{ for~$i=1,\dots,n-2$} \right\}.
\]
For example, in the case~$n=4$ and~$j=2$, we have~$16$ secondary divisors
for~$[D_2]\in\Jac(C_4)$:
\begin{align*}
  \secondary_{[D_2]}= \{ &
    (2, 0, 0, 0), (2, 1, 2, 0), (2, 2, 0, 0), (2, 3, 2, 0), (3, 0, 1, 0), (3, 1, 3, 0),
    (3, 2, 1, 0), (3, 3, 3, 0),
    \\[8pt]
    & (0, 0, 2, 0), (0, 1, 0, 0), (0, 2, 2, 0), (0, 3, 0, 0), (1, 0, 3, 0), (1, 1, 1, 0),
    (1, 2, 3, 0),
  (1, 3, 1, 0) \}.
\end{align*}
By Corollary~\ref{cor: ps-genfun},
\begin{align*}
  \Lambda_{[D_2]}(z)&=\frac{z+2z^2+2z^3+4z^4+2z^5+2z^6+2z^7+z^9}{(1-z^3)^4(1-z)}\\[8pt]
  &=\frac{z+z^2-z^3+z^4}{(1+z^2)(1+z)^2(1-z)^4}\\[8pt]
  &= z+3z^2+5z^3+9z^4+14z^5+22z^6+30z^7+42z^8+55z^9+73z^{10}+\cdots.
\end{align*}
For instance, the term $5z^3$ in the above expression corresponds to the~$5$
elements of the complete linear system for~$D_2+3q=(2,0,0,1)$ pictured in
Figure~\ref{fig: D2+3q}.
\begin{figure}[ht]
\centering
\begin{center}
  \begin{tikzpicture}[scale=0.62]
    \begin{scope}
      \node[miniball,label={below:{$1$}}] (0) at (0,-1) {};   
      \node[miniball,label={right:{$2$}}] (1) at (0.62,0) {};   
      \node[miniball,label={above:{$0$}}] (2) at (0,1) {};   
      \node[miniball,label={left:{$0$}}] (3) at (-0.62,0) {};
      \draw (0)--(1)--(2)--(3)--(0);
    \end{scope}
    \begin{scope}[xshift=4cm]
      \node[miniball,label={below:{$2$}}] (0) at (0,-1) {};   
      \node[miniball,label={right:{$0$}}] (1) at (0.62,0) {};   
      \node[miniball,label={above:{$1$}}] (2) at (0,1) {};   
      \node[miniball,label={left:{$0$}}] (3) at (-0.62,0) {};
      \draw (0)--(1)--(2)--(3)--(0);
    \end{scope}
    \begin{scope}[xshift=8cm]
      \node[miniball,label={below:{$0$}}] (0) at (0,-1) {};   
      \node[miniball,label={right:{$1$}}] (1) at (0.62,0) {};   
      \node[miniball,label={above:{$1$}}] (2) at (0,1) {};   
      \node[miniball,label={left:{$1$}}] (3) at (-0.62,0) {};
      \draw (0)--(1)--(2)--(3)--(0);
    \end{scope}
    \begin{scope}[xshift=12cm]
      \node[miniball,label={below:{$1$}}] (0) at (0,-1) {};   
      \node[miniball,label={right:{$0$}}] (1) at (0.62,0) {};   
      \node[miniball,label={above:{$0$}}] (2) at (0,1) {};   
      \node[miniball,label={left:{$2$}}] (3) at (-0.62,0) {};
      \draw (0)--(1)--(2)--(3)--(0);
    \end{scope}
    \begin{scope}[xshift=16cm]
      \node[miniball,label={below:{$0$}}] (0) at (0,-1) {};   
      \node[miniball,label={right:{$0$}}] (1) at (0.62,0) {};   
      \node[miniball,label={above:{$3$}}] (2) at (0,1) {};   
      \node[miniball,label={left:{$0$}}] (3) at (-0.62,0) {};
      \draw (0)--(1)--(2)--(3)--(0);
    \end{scope}
  \end{tikzpicture}
\end{center}
\caption[Complete linear system]{The complete linear system~$|D_2+3q|$ on~$C_4=
  \begin{tikzpicture}[scale=0.3,baseline=-3pt]
      \node[miniball,label={below:{\small $q$}}] (0) at (0,-1) {};   
      \node[miniball,label={right:{\small $v_1$}}] (1) at (0.62,0) {};   
      \node[miniball,label={above:{\small $v_2$}}] (2) at (0,1) {};   
      \node[miniball,label={left:{\small $v_3$}}] (3) at (-0.62,0) {};
      \draw (0)--(1)--(2)--(3)--(0);
    \end{tikzpicture}
  $ (cf.~Example~\ref{example: ps-cycle}).}\label{fig: D2+3q}
\end{figure}

See Section~\ref{sect: necklaces} for the relation between complete linear
systems on cycle graphs and binary necklaces.

\subsubsection{Complete graphs.}\label{example: ps-complete} Let~$K_n$ be the
complete graph with vertex set~$V=\{v_1,\dots,v_n\}$, and let~$q:=v_n$. Its reduced
Laplacian~$\tL$ is the matrix~$nI_{n-1}-J_{n-1}$ where~$J_{n-1}$ is
the~$(n-1)\times(n-1)$ matrix whose entries are all~$1$. The inverse
is~$\tL^{-1}=\frac{1}{n}(I_{n-1}+J_{n-1})$, and therefore,~$\ord_{q}(v)=n$ for all
vertices~$v\neq q$. To reduce~$\tL$ to Hermite normal form, add
columns~$2$ through~$n-1$ to the first column of~$\tL$,
then add the first column to each of the others.  The result is the matrix
formed by replacing the first column of~$nI_{n-1}$ by a column of all~$1$s. So
the image of~$\tL$ in~$(\Z/n\Z)^{n-1}$ is spanned by the vector of all~$1$s.
See Figure~\ref{fig: ps-complete} for an example.  
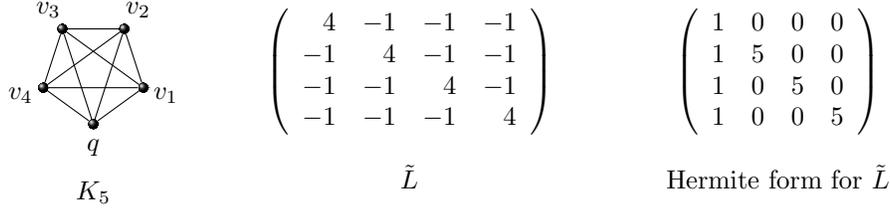
\begin{figure}[ht]
\centering
\begin{tikzpicture}[scale=0.7]
  \foreach \x in {0,...,4} {
    \node[ball] (\x) at (72*\x-90:1) {};
  }
  \foreach \x in {1,...,4} {
    \node at (72*\x-90:1.45) {$v_{\x}$};
  }
  \node at (270:1.45) {$q$};
  \foreach \x in {0,...,3}
    \foreach \y in {\x,...,4} {
      \draw (\x)--(\y);
  };
  \node at (6,0) {$
  \left(\begin{array}{rrrr}
    4 & -1 & -1 & -1\\
    -1 & 4 & -1 & -1\\
    -1 & -1 & 4 & -1\\
    -1 & -1 & -1 & 4\\
    \end{array}\right)
    $};
  \node at (13,0) {$
  \left(\begin{array}{rrrrr}
  1 & 0 & 0 & 0\\
  1 & 5 & 0 & 0\\
  1 & 0 & 5 & 0\\
  1 & 0 & 0 & 5
  \end{array}\right)
  $};
  \node at (0,-2.3) {$K_5$};
  \node at (6,-2) {$\tL$};
  \node at (13,-2) {Hermite form for~$\tL$};
\end{tikzpicture}
\caption{The complete graph~$K_5$ (cf.~Example~\ref{example:
ps-complete}).}\label{fig: ps-complete}
\end{figure}

Taking~$\tl_v=n$ for~$v\neq q$
and~$\tl_q=1$, the primary divisors are~$\primary=\{nv\}_{v\in V\setminus\left\{
q \right\}}\cup\{q\}$, and the secondary divisors for~$[0]\in\Jac(K_n)$ are
\[
  \secondary_{[0]}=\{k(1,1,\dots,1,0): 0\leq k<n\}.
\]
Hence,
\begin{align*}
  \Lambda_{[0]}(z)
  &= \frac{\sum_{k=0}^{n-1}z^{k(n-1)}}{(1-z^n)^{n-1}(1-z)}\\[8pt]
  &=\frac{1}{1-z}\left(\,\sum_{k=0}^{n-1}z^{k(n-1)}\right)\sum_{k\geq
  0}\binom{n-2+k}{n-2}z^{kn}.
\end{align*}
The sequence of first differences of the~$\lambda$-sequence for~$0$
has~$k$-th term~$\Delta\lambda_{[0]}(k):=\lambda_{[0]}(k+1)-\lambda_{[0]}(k)$,
which is the coefficient of~$z^{k+1}$ in~$(1-z)\Lambda_{[0]}(z)$ for~$k\geq0$.  
A bit of calculation then shows that writing~$k=an+b$ in terms of quotient and
remainder by~$n$ gives
\[
 \Delta\lambda_{[0]}(k)=\binom{a+b}{n-2}.
\]
This means $\Delta\lambda_{[0]}$ is formed by concatenating sequences of length~$n$:
\[
  \binom{m}{n-2},\binom{m+1}{n-2},\dots,\binom{m+(n-1)}{n-2}  
\]
for $m\in\N$.
For example, in the case~$n=5$, 
\begin{align*}
  \Lambda_{[0]}(z)&=\frac{1+z^4+z^{8}+z^{12}+z^{16}}{(1-z^5)^4(1-z)}\\[8pt]
  &=
1+z+z^{2}+z^{3}+2z^{4}+6z^{5}+6z^{6}+6z^{7}+7z^{8}+11z^{9}+\cdots.
\end{align*}
We have
\[
  \lambda_{[0]}=
  1, 1, 1, 1, 2, 6, 6, 6, 7, 11, 21, 21, 22, 26, 36, 56, 57, 61, 71, 91, 126,
  130, 140, 160, 195, 251, \dots,
\]
which gives
\[
\Delta\lambda_{[0]}=
\underline{0, 0, 0, 1, 4},\ \underline{0, 0, 1, 4, 10},\ \underline{0, 1, 4, 10,
20},\ \underline{1, 4, 10, 20, 35},\ \underline{4, 10, 20, 35, 56},\ \dots
\]

\noindent{\bf Parking functions.} Parking functions are basic objects in
combinatorics closely related to~$q$-reduced divisors on~$K_n$ .  We briefly
recall these notions here. For details, see e.g.~\cite[Chapter~11]{Corry}.  A
vector~$p=(p_1,\dots,p_{n-1})\in\Z^{n-1}$ with~$1\leq p_i\leq n$ for each~$i$ is
a {\em parking function} of length~$n-1$ if for each~$j=1,\dots,n-1$,
\[
  |\left\{ i:p_i\leq j \right\}|\geq j.
\]
We partially order parking functions by~$p'\leq p$ if~$p_i'\leq p_i$ for
all~$i$.  To form all parking functions of length~$n-1$, start
with a set~$P_{n-1}$ containing the maximal parking
function~$p:=(1,\dots,n-1)$, then add all
vectors~$p'\in\Z^{n-1}$ such that~$\vec{1}\leq p'\leq p$ to~$P_{n-1}$. Finally, 
for each~$p'\in P_{n-1}$, add all vectors that arise from permuting the
coordinates of~$p'$.  The total number of parking functions of length~$n-1$
is~$n^{n-2}=|\Jac(K_n)|$.  

As with any graph, the elements of~$\Jac(K_n)$ are represented by ~$q$-reduced
divisors of degree~$0$. However, on~$K_n$ it turns out that a divisor $D$ is~$q$-reduced if and only
if~$D|_{q=0}=p-\vec{1}=(p_1-1,\dots,p_{n-1}-1)$ for some parking function $p$.  
Thus, on~$K_n$ there is a bijective correspondence between parking
functions and elements of~$\Jac(K_n)$.

We have discussed the~$\lambda$-sequence for the unique divisor class
corresponding to the smallest parking function,~$p=\vec{1}$.  We will now show
that the first differences of the~$\lambda$-sequence for any of the~$(n-1)!$
divisor classes corresponding to a maximal parking function has a particularly
nice form.  By symmetry, we may assume that~$D=(0,1,\dots,n-2,\alpha)$
where~$\alpha:=-\sum_{k=0}^{n-2}k$ so that~$\deg(D)=0$
and~$D|_{q=0}=(0,1,\dots,n-2)$.  We saw earlier that the standard
representatives of the Hermite normal form for~$\tL$
are~$k(1,\dots,1)\in\Z^{n-1}$ for~$k=0,\dots,n-1$.  Therefore, by
Proposition~\ref{prop: ps-computation}, we get~$\secondary_{[D]}$ by taking
standard representatives for elements of the following set, working modulo~$n$ in the
first~$n-1$ coordinates:
\[
  \left\{ (0,1,\dots,n-2,0)+(k(1,1,\dots,1,0):k=0,\dots,n-1\right\}.
\]
Computing the degrees of these divisors, Corollary~\ref{cor: ps-genfun} gives
\[
  \Lambda_{[D]}(z)=\frac{\sum_{i=0}^{n-1}z^{\binom{n}{2}-i}}{(1-z^n)^{n-1}(1-z)}.
\]
An analysis like that given above for~$\lambda_{[0]}$ shows that
the sequence of first differences of the~$\lambda$-sequence for~$[D]$
starts out
with~$\binom{n-1}{2}$ zeroes and then is followed by the
sequence~$\binom{k+n-2}{n-2}_{k\geq0}$ but with each term repeated~$n$ times.
For instance, on~$K_5$ we have~$D=(0,1,2,3,-6)$, and
\begin{align*}
  \lambda_{[D]}&=
  0, 0, 0, 0, 0, 0, 1, 2, 3, 4, 5, 9, 13, 17, 21, 25, 35, 45, 55, 65, 75, 95,
  115, 135, 155, 175, 210, \dots\\[5pt]
  \Delta\lambda_{[D]}&=
  0, 0, 0, 0, 0, 1, 1, 1, 1, 1, 4, 4, 4, 4, 4, 10, 10, 10, 10, 10, 20, 20,
  20, 20, 20, 35,\dots
\end{align*}

\section{Polyhedra}\label{sect: poly} 
We now interpret the results of Section \ref{sect: primsec} in terms of lattice
points in polyhedra naturally associated with divisors.

\subsection{Background} We first recall some theory, using~\cite{Beck} as our
reference.  An {\em affine $n$-cone in~$\R^n$}, or simply, an {\em $n$-cone}, is
a set of the form
\[
  \K = \left\{\,
  p+\lambda_1\omega_1+\dots+\lambda_m\omega_m:\lambda_1,\dots,\lambda_m\geq0
\right\},
\]
where~$\omega_1,\dots,\omega_m,p\in\R^n$ and the span of the~$\omega_i$ has
dimension~$n$.  The~$\omega_i$ are called {\em generators} of the
cone.  Any generator that is not a nonnegative combination of the remaining
generators is called an {\em extreme ray}. The cone is {\em pointed} if it
contains no line, and in that case~$p$ is called its {\em apex}.  We say~$\K$ is
{\em rational} if~$p,\omega_1,\dots,\omega_m\in\Q^n$, and then, by rescaling, we
may assume the~$\omega_i$ have integer coordinates.  An~$n$-cone is {\em
simplicial} if it may be written using~$n$ generators.  Simplicial cones are
necessarily pointed.

Equivalently, we may define a rational pointed $n$-cone in~$\R^n$ to be
an~$n$-dimensional intersection of finitely many half-planes of the form
\[
  \left\{ x\in\R^n: a_1x_1+\dots+a_nx_n\geq \beta\right\},
\]
where~$a_1,\dots,a_n,\beta\in\Z$ and such that the hyperplanes 
\[
  \left\{ x\in\R^n: a_1x_1+\dots+a_nx_n= \beta\right\}
\]
meet in a single point. In that case, we may express the cone as $\left\{
x\in\R^n:Ax\geq b \right\}$ where~$A$ is an integral~$m\times n$ matrix of
rank~$n$ and $b\in\Z^m$.

If~$\K$ is a simplicial~$n$-cone in~$\R^n$ with an integral generating
set~$\Omega=\{\omega_1,\dots,\omega_n\}$ and apex~$p$, define the {\em
fundamental parallelepiped} for~$\K$ with respect to~$\Omega$ to be
\[
  \textstyle
  \Pi:=\left\{\,
  p+\sum_{i=1}^n\lambda_i\omega_i:0\leq\lambda_1,\lambda_2,\dots,\lambda_n<1
\right\}.
\]
We will need the following:
\begin{property}\label{property: characterization}  Every point~$\alpha\in\K\cap\Z^n$ has a unique
  expression as
\[
\alpha = \pi+m_1\omega_1+\dots+m_d\omega_d
\]
with~$\pi\in\Pi$ and~$m_1,\dots,m_d\in\N$.
\end{property}

Define the {\em integer-point transform} of a set~$S\subset\R^n$ by
\[
  \sigma_S(\vec{z}\,)=\sigma_S(z_1,\dots,z_n):=\sum_{\alpha\in
  S\cap\Z^n}\vec{z}^{\ \alpha},
\]
where~$\vec{z}^{\ \alpha}:=\prod_{i=1}^nz_i^{\alpha_i}$.

\begin{thm}\label{thm: polyhedral generating function} {\rm (\cite[Theorem 3.5]{Beck})}  Let
  \[
    \K = \left\{\,
  p+\lambda_1\omega_1+\dots+\lambda_m\omega_n:\lambda_1,\dots,\lambda_n\geq0 \right\}
  \]
  be a simplicial~$n$-cone in~$\R^n$ with~$\omega_1,\dots,\omega_n\in\Z^n$
  and~$p\in\R^n$.  Then
  \[
    \sigma_{\K}(\vec{z}\,)=\frac{\sigma_{\Pi}(\vec{z}\,)}{\prod_{i=1}^{n}(1-\vec{z}^{\
    \omega_i})},
  \]
  where~$\Pi$ is the fundamental parallelepiped of~$\K$ with respect to the~$\omega_i$.
\end{thm}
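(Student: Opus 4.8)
The plan is to reduce everything to Property~\ref{property: characterization}, which already does the combinatorial heart of the matter by assigning to each lattice point of the cone a unique ``address'' relative to the fundamental parallelepiped. First I would expand the defining sum for the integer-point transform and substitute the unique decomposition guaranteed by that property. Concretely, every $\alpha\in\K\cap\Z^n$ is written uniquely as $\alpha=\pi+m_1\omega_1+\dots+m_n\omega_n$ with $\pi\in\Pi$ and $m_1,\dots,m_n\in\N$; since $\alpha$ and the $\omega_i$ are integral and the $m_i$ are integers, the point $\pi=\alpha-\sum_i m_i\omega_i$ automatically lies in $\Pi\cap\Z^n$. Thus the map $(\pi,m_1,\dots,m_n)\mapsto\alpha$ is a bijection from $(\Pi\cap\Z^n)\times\N^n$ onto $\K\cap\Z^n$, and this bijection is exactly what lets me rewrite the single sum over $\alpha$ as an iterated sum over the address data.

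Next I would carry out the factorization. Using $\vec{z}^{\,\alpha}=\vec{z}^{\,\pi}\prod_{i=1}^n(\vec{z}^{\,\omega_i})^{m_i}$, the sum splits as a product:
\[
\sigma_{\K}(\vec{z}\,)=\sum_{\pi\in\Pi\cap\Z^n}\ \sum_{(m_1,\dots,m_n)\in\N^n}\vec{z}^{\,\pi}\prod_{i=1}^n(\vec{z}^{\,\omega_i})^{m_i}=\Bigl(\sum_{\pi\in\Pi\cap\Z^n}\vec{z}^{\,\pi}\Bigr)\prod_{i=1}^n\Bigl(\sum_{m_i\geq0}(\vec{z}^{\,\omega_i})^{m_i}\Bigr).
\]
Recognizing the first factor as $\sigma_{\Pi}(\vec{z}\,)$ and summing each geometric series $\sum_{m_i\geq0}(\vec{z}^{\,\omega_i})^{m_i}=1/(1-\vec{z}^{\,\omega_i})$ then yields the claimed formula. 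This is the same bookkeeping used in the proof of Corollary~\ref{cor: ps-genfun}, now carried out with the several variables $z_1,\dots,z_n$ in place of a single $z$.

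The step requiring genuine care is the analytic justification, since interchanging the order of summation and factoring into an infinite product presuppose that the series converge. The cleanest route is to restrict to those $\vec{z}\in\C^n$ for which $|\vec{z}^{\,\omega_i}|<1$ for every $i$; because $\K$ is simplicial, hence pointed, and the $\omega_i$ are linearly independent, such $\vec{z}$ exist, and the defining sum for $\sigma_{\K}$ converges absolutely there, so that Fubini's theorem licenses both the reindexing by the bijection above and the factorization. Alternatively, one may read the entire identity in a suitable ring of formal Laurent series, where no convergence is needed and the bijection alone suffices. Either way, the combinatorial content is supplied entirely by Property~\ref{property: characterization}; the remaining work is routine once the domain of convergence (or the formal framework) is fixed.
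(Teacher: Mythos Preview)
Your argument is correct and is essentially the standard proof of this result: the bijection $(\Pi\cap\Z^n)\times\N^n\to\K\cap\Z^n$ supplied by Property~\ref{property: characterization} does all the work, and the factorization into geometric series is routine. Note, however, that the paper does not actually prove this theorem---it is quoted from~\cite[Theorem~3.5]{Beck} as background, so there is no ``paper's own proof'' to compare against; what you have written is precisely the argument one finds in that reference.
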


\subsection{Linear systems and polyhedra} 
As usual, fix an ordering~$v_1,\dots,v_n$ of the vertices of~$G$ with $q=v_n$, and
then identify both~$\Div(G)$ and~$\Z^V$ with~$\Z^n$. 
\medskip

\noindent{\bf Note:} Throughout this section, we
fix the embedding
\[
  \R^{n-1}=\R^{n-1}\times\left\{ 0 \right\}\subset\R^{n}.
\]
In this way, if~$D\in\Div(G)=\Z^n$, then we may regard~$D|_{q=0}$ as an element of
either~$\Z^{n-1}$ or~$\Z^n$.  Similarly, given~$f\in\R^{n-1}$, we write~$Lf$ in place
of~$L\bigl(\begin{smallmatrix}f\\0\end{smallmatrix}\bigr)$.
\smallskip

Divisors~$D$ and~$D'$ on~$G$ are linearly
equivalent exactly when there is a function~$f\in\Z^V$ such
that~$D'=D+\smalldiv(f)$. In this context~$f$ is referred to as a {\em firing
script}, and we express the complete linear system for~$D$ as
\[
  |D|=\left\{ E\in\Div(G): E = D+Lf\geq0 \text{ for some firing
  script~$f$}\right\}.
\]
The set of firing scripts appearing above for the complete linear system for~$D$
form the polyhedron
\[
  Q_D :=\left\{f\in\R^n: Lf\geq -D\right\}\subset\R^n.    
\]
However, the integer points of~$Q_D$ are not in bijection with elements of~$|D|$
since~$L$ has a non-trivial kernel.  The kernel is generated by the
all-ones vector~$\vec{1}$; so modulo~$\ker(L)$, each firing
script~$f=(f_1,\dots,f_n)$ has the unique
representative $f-f_n\cdot\vec{1}$ with last coordinate~$0$,
leading us to define
\[
  P_D:=Q_D\cap\left\{f\in\R^n: f_n=0 \right\}\subset\R^{n-1}
\]
so that~$Q_D=P_D+\R\vec{1}\subset\R^n$.
It is straightforward to see that the integer points~$P_D\cap\Z^{n-1}$ are in
bijection with~$|D|$:
\begin{equation}\label{eqn: polytope bijection}
  f\in P_D\cap\Z^{n-1}\quad\longleftrightarrow\quad
  D+Lf\in|D|.
\end{equation}
Since~$|D|$ is finite, it follows that the polyhedron~$P_D$ is bounded, and hence is
a~{\em polytope}. (For a direct proof of boundedness, see
\cite[Proposition~2.20]{Corry}.)

If~$D\sim D'$ with~$D'=D+Lf$, then the polyhedra associated with these divisors
differ by a translation: $Q_{D}=f+Q_{D'}$, and as discussed above, we may
assume~$f_n=0$ to write $P_{D}=f+P_{D'}$.

The ideas presented above may be applied in order to
characterize~$\E_{[D]}=\cup_{k\geq0}|D+kq|$ in terms of firing vectors.
\begin{defn} The {\em $q$-cone} for a divisor~$D\in\Div^0(G)$ is the set
\[
  \K_D := \left\{ (f,t)\in\R^n\times\R : Lf+tq\geq -D \text{ and } f_n=0\right\}
\subset\R^{n-1}\times\R.
\] 
\end{defn}

\Needspace{3\baselineskip}
\begin{thm}\label{thm: poly} Let~$D\in\Div^0(G)$. 
Then $\K_D$ is a rational simplicial~$n$-cone with
      apex~$p:=\tL^{-1}(-D|_{q=0})\in\R^{n-1}\times\left\{ 0
      \right\}\subset\R^{n}$ and has the following properties: 
  \begin{enumerate}[label=\rm{(\arabic*)},leftmargin=*]
    \item\label{poly1} The set of integer points of~$\K_D$ is in bijection
      with~$\E_{[D]}$ via the mapping
      \begin{align*}
	\psi_D\colon\K_D\cap\Z^{n}&\xrightarrow{\sim}\E_{[D]}\\
	(f,k)\ &\mapsto D+kq+Lf.
      \end{align*}

    \item\label{poly2} The mapping~$\psi_0$ restricts to a bijection between
      generating sets of integral extreme rays for~$\K_D$ and sets of primary
      divisors for~$G$.  Let~$\Omega$ be a generating set of integral extreme
      rays for~$\K_D$ with corresponding set of primary
      divisors~$\primary=\psi_0(\Omega)$.  Let~$\Pi$ be the corresponding
      fundamental parallelepiped.
      Then $\psi_D$ restricts to a bijection between the integer
      points of~$\Pi$ and the secondary divisors of $[D]$ with respect to
      $\primary$.  
    \item\label{poly3} Let~$\Omega$ and~$\Pi$ be as in part~\ref{poly2}.  Then the~$\lambda$-sequence generating function for~$\E_{[D]}$ is
      \[
	\Lambda_{[D]}(z)=\sigma_{\K}(1,\dots,1,z)=\frac{\sigma_{\Pi}(1,\dots,1,z)}{\prod_{\omega\in\Omega}(1-z^{\deg(\omega)})},
      \]
      where~$\deg(\omega)$ is the sum of the coordinates of~$\omega$.  The numerator and
      denominator of the expression on the right are the same as those appearing
      in Corollary~\ref{cor: ps-genfun}.
  \end{enumerate}
\end{thm}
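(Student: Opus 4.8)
The plan is to realize $\K_D$ as a system of $n$ linear inequalities indexed by the vertices and to read off every claim from a single block decomposition of $L$. Writing $L$ in block form with respect to the fixed last vertex $q=v_n$,
\[
  L=\begin{pmatrix}\tL & c\\ c^{\top} & d\end{pmatrix},
  \qquad c=-\tL\vec{1},
\]
where $c=-\tL\vec 1$ because the rows of $L$ sum to zero. For $(f,t)$ with $f=(f',0)$, the constraint $Lf+tq\geq -D$ becomes $M\left(\begin{smallmatrix}f'\\ t\end{smallmatrix}\right)\geq -D$ with $M=\left(\begin{smallmatrix}\tL & 0\\ c^{\top} & 1\end{smallmatrix}\right)$. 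Since $M$ is block lower triangular with $\det M=\det\tL\neq 0$, its rows furnish $n$ linearly independent facet normals meeting in the single point $M^{-1}(-D)$; hence $\K_D=\{x:Mx\geq -D\}$ is a rational, pointed, simplicial $n$-cone. A short computation using $-c^{\top}\tL^{-1}=\vec 1^{\top}$ gives $M^{-1}=\left(\begin{smallmatrix}\tL^{-1}&0\\ \vec 1^{\top}&1\end{smallmatrix}\right)$, so the apex is $M^{-1}(-D)=\bigl(\tL^{-1}(-D|_{q=0}),\,-\deg(D|_{q=0})-D(q)\bigr)=(p,0)$, the last coordinate vanishing precisely because $\deg(D)=0$ forces $D(q)=-\deg(D|_{q=0})$. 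This establishes the opening assertion.

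For part \ref{poly1}, note first that $(f,k)\in\K_D$ says exactly that $D+kq+Lf\geq 0$, so $\psi_D(f,k)$ is effective; since $\deg(D+kq+Lf)=k$ and $D+kq+Lf-kq=D+Lf\sim D$, the image lies in $\E_{[D]}$. I would prove injectivity by taking degrees (forcing $k=k'$) and then using $L(f-f')=0$ together with $\ker L=\R\vec 1$ and the normalization $f_n=f'_n=0$ to conclude $f=f'$. For surjectivity, given $E\in\E_{[D]}$ set $k=\deg E\geq 0$; then $E-kq\sim D$ supplies a firing script $g$ with $E=D+kq+Lg$, and replacing $g$ by $g-g_n\vec 1$ (which leaves $Lg$ unchanged) produces a preimage in $\K_D$. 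The inequality $k\geq 0$ is automatic because effective divisors have nonnegative degree, so no extra constraint on $t$ is needed.

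Part \ref{poly2} carries the real content. The extreme rays of the simplicial cone are spanned by the columns of $M^{-1}$: for $j<n$ the direction is $\bigl((\tL^{-1})_{\cdot j},1\bigr)$, and for $j=n$ it is $(0,1)$. Clearing denominators in the $j$-th column of $\tL^{-1}$ scales the $j$-th ray to its primitive integral generator by exactly $\ord_q(v_j)$, by Proposition \ref{prop: ps-computation}\ref{ps-comp1}; more generally the integral generators of that ray are $\ell_{v_j}$ times the direction, for $\ell_{v_j}$ a positive multiple of $\ord_q(v_j)$. A direct computation with $c^{\top}\tL^{-1}=-\vec 1^{\top}$ shows that $\psi_0$ carries this generator to $L\bigl(\ell_{v_j}(\tL^{-1})_{\cdot j}\bigr)+\ell_{v_j}q=\ell_{v_j}(v_j-q)+\ell_{v_j}q=\ell_{v_j}v_j$, and the ray $(0,1)$ scaled by $\ell_q$ to $\ell_q q$. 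Comparing with Theorem \ref{thm: primary-secondary}\ref{ps-2}, this is precisely a bijection between generating sets of integral extreme rays and sets of primary divisors. For the parallelepiped statement I would exploit the equivariance $\psi_D(x+y)=\psi_D(x)+\psi_0(y)$: applying $\psi_D$ to the unique decomposition $\alpha=\pi+\sum_j m_j\omega_j$ of Property \ref{property: characterization} yields $\psi_D(\alpha)=\psi_D(\pi)+\sum_j m_j P_j$ with $P_j=\psi_0(\omega_j)\in\primary$. Because $\psi_D$ is a bijection, this matches the primary--secondary decomposition of Theorem \ref{thm: primary-secondary} term by term, so $\psi_D(\pi)$ ranges bijectively over $\secondary_{[D]}$ as $\pi$ ranges over $\Pi\cap\Z^n$.

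Finally, for part \ref{poly3} I would observe that $\deg\psi_D(f,k)=k$ is the final coordinate, so specializing the integer-point transform to $(1,\dots,1,z)$ records degree: $\Lambda_{[D]}(z)=\sum_{E\in\E_{[D]}}z^{\deg E}=\sigma_{\K_D}(1,\dots,1,z)$. Applying Theorem \ref{thm: polyhedral generating function} and specializing gives the displayed quotient, in which each factor $1-\vec{z}^{\,\omega}$ collapses to $1-z^{(\omega)_n}=1-z^{\deg\psi_0(\omega)}=1-z^{\ell_v}$, while $\sigma_{\Pi}(1,\dots,1,z)=\sum_{F\in\secondary_{[D]}}z^{\deg F}$ by part \ref{poly2}; these are exactly the denominator and numerator of Corollary \ref{cor: ps-genfun}. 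The main obstacle throughout is the bookkeeping in part \ref{poly2}: one must verify that the purely geometric unique decomposition of Property \ref{property: characterization} coincides, under $\psi_D$, with the monoid-theoretic primary--secondary decomposition of Theorem \ref{thm: primary-secondary}, and that the integrality scaling of each extreme ray is governed exactly by $\ord_q(v)$ rather than by some larger multiple.
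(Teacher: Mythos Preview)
Your proof is correct and follows the same route as the paper: rewrite $\K_D$ as $\{x:Mx\geq -D\}$ for the block-triangular matrix $M=\left(\begin{smallmatrix}\tL & 0\\ c^{\top} & 1\end{smallmatrix}\right)$, read off apex and extreme rays from $M^{-1}$, and transport Property~\ref{property: characterization} through $\psi_D$ to recover the primary--secondary decomposition. The paper's own argument is far terser---it records the inequality description~\eqref{eqn: K_D}, checks the apex, and then dispatches parts~\ref{poly1}--\ref{poly3} in one sentence each by pointing to the preceding discussion and Property~\ref{property: characterization}; your explicit computation of the columns of $M^{-1}$, the identification of the integrality scaling with $\ord_q(v_j)$ via Proposition~\ref{prop: ps-computation}\ref{ps-comp1}, and the equivariance $\psi_D(x+y)=\psi_D(x)+\psi_0(y)$ are exactly the details the paper leaves implicit (and partially defers to Proposition~\ref{prop: facet}).
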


\begin{proof} Let~$\tilde{r}_n\in\R^{n-1}$ denote the last row of~$L$ with its
  last entry removed. Then
\begin{equation}
  \label{eqn: K_D}
  \K_D = \left\{ (f,t)\in\R^{n-1}\times\R: \tL f\geq -D|_{q=0}\text{ and
  }\tilde{r}_n\cdot f+t\geq-D(q)\right\}.  
\end{equation}
Since~$\tL$ is invertible, these defining conditions are independent, and it
follows that~$\K_D$ is a rational~$n$-cone. To find the apex, first
solve~$\tL f=-D|_{q=0}$ to find~$f=\tL^{-1}(-D|_{q=0})$.  Next, since the sum of the
  rows of~$L$ is~$0$, if follows that~$\tilde{r}_n\cdot f=-\tL f$, and it is now
  easy to verify that the last coordinate of the apex is~$t=0$ using the fact
  that~$\deg(D)=0$.

The rest follows immediately from the discussion preceding the theorem.
Part~\ref{poly1} uses the fact that every firing script has a unique
representative modulo $\ker L$ having final coordinate~$0$.  Part~\ref{poly2}
relies on Property~\ref{property: characterization}.  Part~\ref{poly3} follows
since~$\deg\psi_D(f,k)=k$.
\end{proof}


The following proposition shows that the essential information encoded
in~$\K_{D}$ is contained in its bottom (with respect to the last
coordinate) face:
\begin{prop}\label{prop: facet}  Given~$D\in\Div^0(G)$, take the union of the
  nested sequence of
  polytopes~$P_D\subset P_{D+q}\subset P_{D+2q}\subset\dots$ to define
  \[
    \tK_D:=\bigcup_{k\in\N} P_{D+kq}\subset\R^{n-1}.
  \]
  \begin{enumerate}[label=\rm{(\arabic*)},leftmargin=*]
    \item\label{facet1} $\tK_D$ is a rational simplicial~$(n-1)$-cone, and
      \[
	\tK_D=\left\{ f\in\R^{n-1}:\tL f\geq-D|_{q=0} \right\}.
      \]
      The apex of~$\tK_D$ is~$\tilde{p}:=\tL^{-1}(-D|_{q=0})\in\R^{n-1}$.
    \item\label{facet2} Let~$\tilde{r}_n\in\R^{n-1}$ be the last row of the Laplacian
      matrix with its final entry removed.  Then there is a
      injection
      \begin{align*}
	\imath_D\colon\tK_D&\to\K_D\subset\R^{n-1}\times\R\\
	f&\mapsto (f,-D(q)-(\tilde{r}_n\cdot f)).
      \end{align*}
      The image of~$\imath_D$ is the facet of~$\K_D$ which is the intersection
      of~$K_D$ with the hyperplane 
      \[
	H:=\left\{(f,t)\in\R^{n-1}\times\R: (\tilde{r}_n\cdot f)+t=-D(q)  \right\},
      \]
      and
      \[
	\K_D=\imath_D(\tK_D)+\R_{\geq0}q.
      \]
    \item\label{facet3} Write
      \[
	\tK_D=\left\{\tilde{p}+\lambda_1\tilde{\omega}_1
	  +\dots+\lambda_{n-1}\tilde{\omega}_{n-1}:
	  \lambda_1,\dots,\lambda_{n-1}\geq0
	\right\}
      \]
      with integral generating
      set~$\tilde{\Omega}:=\{\tilde{\omega}_1,\dots,\tilde{\omega}_{n-1}\}\subset\Z^{n-1}$.
      Let~$\tilde{v}_i$ denote the~$i$-th standard basis vector for~$\R^{n-1}$.
      Then up to re-indexing, $\tilde{\omega}_i=\tL^{-1}(\tl_{i}\tilde{v}_i)$
      where~$\tl_{i}$ is a positive integer multiple of~$\ord_q(v_i)$.  The set
      $\Omega:=\imath_0(\tilde{\Omega})\cup\left\{ \tl_qq \right\}$ is an
      integral generating set of extreme rays for~$\K_D$ for any choice of
      positive integer~$\tl_q$.  Every integral generating set of extreme rays
      for~$\K_D$ arises in this manner.  With this notation,
      let~$\widetilde{\Pi}$ and~$\Pi$ be the fundamental parallelepipeds
      for~$\tilde{\Omega}$ and~$\Omega$, respectively.  Then
      \[
	\Pi\cap\Z^n=\left\{ \imath_D(\tilde{\pi})+\ell q:
	\tilde{\pi}\in\widetilde{\Pi}\cap\Z^{n-1}\text{ and } 0\leq \ell<\tl_q\right\}.
      \]
  \end{enumerate}
\end{prop}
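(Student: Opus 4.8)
The plan is to unwind the definitions in each part, the engine throughout being the identity $\tilde{r}_n\cdot f=-\sum_i(\tL f)_i$ for $f\in\R^{n-1}$ (embedded with last coordinate $0$), which follows from $\vec{1}^{\mathsf T}L=0$; this is precisely the fact already used to locate the apex of $\K_D$ in Theorem~\ref{thm: poly}. For part~\ref{facet1}, I would first rewrite $P_{D+kq}=\{f\in\R^{n-1}:\tL f\geq -D|_{q=0}\text{ and }\tilde{r}_n\cdot f\geq -D(q)-k\}$, reading off the two families of constraints from $L\bigl(\begin{smallmatrix}f\\0\end{smallmatrix}\bigr)$. As $k$ grows the single last inequality only loosens and, for any fixed $f$ with $\tL f\geq -D|_{q=0}$, eventually holds; hence $\tK_D=\bigcup_k P_{D+kq}=\{f:\tL f\geq -D|_{q=0}\}$. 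Since $\tL$ is invertible, the substitution $g=f-\tilde{p}$ with $\tilde{p}=\tL^{-1}(-D|_{q=0})$ turns this into $\tilde{p}+\tL^{-1}(\R_{\geq0}^{\,n-1})$, a translate of the simplicial cone generated by the independent vectors $\tL^{-1}\tilde{v}_i$, which gives the rational simplicial structure and the stated apex.

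For part~\ref{facet2}, injectivity of $\imath_D$ is clear from its first $n-1$ coordinates. Using the description~\eqref{eqn: K_D} of $\K_D$, the point $\imath_D(f)=(f,-D(q)-\tilde{r}_n\cdot f)$ satisfies $\tL f\geq -D|_{q=0}$ (since $f\in\tK_D$) and $\tilde{r}_n\cdot f+t=-D(q)$ with equality, so it lies on the facet $H\cap\K_D$; conversely any point of that facet has $t=-D(q)-\tilde{r}_n\cdot f$ and thus equals $\imath_D(f)$. For the decomposition $\K_D=\imath_D(\tK_D)+\R_{\geq0}q$, I would take a general $(f,t)\in\K_D$ and write $t=(-D(q)-\tilde{r}_n\cdot f)+\delta$ with $\delta\geq0$ forced by the facet inequality, giving $(f,t)=\imath_D(f)+\delta q$.

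Part~\ref{facet3} is where the bookkeeping concentrates. The recession cone of $\tK_D$ is $\tL^{-1}(\R_{\geq0}^{\,n-1})$, whose extreme rays are the $\R_{\geq0}\tL^{-1}\tilde{v}_i$; by Proposition~\ref{prop: ps-computation}\ref{ps-comp1} the primitive integral generator on each is $\tL^{-1}(\ord_q(v_i)\tilde{v}_i)$, so any integral generator has the form $\tilde{\omega}_i=\tL^{-1}(\tl_i\tilde{v}_i)$ with $\tl_i$ a positive multiple of $\ord_q(v_i)$. Then $\tL\tilde{\omega}_i=\tl_i\tilde{v}_i$ and the key identity give $\tilde{r}_n\cdot\tilde{\omega}_i=-\tl_i$, so $\imath_0(\tilde{\omega}_i)=(\tilde{\omega}_i,\tl_i)$; moreover the same identity shows $\imath_D$ is the affine map sending $\tilde{p}+\sum_i\lambda_i\tilde{\omega}_i$ to $p+\sum_i\lambda_i\imath_0(\tilde{\omega}_i)$, where $p=(\tilde{p},0)$ is the apex of $\K_D$. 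Combining this with $\K_D=\imath_D(\tK_D)+\R_{\geq0}q$ exhibits $\K_D$ as generated by $\Omega=\imath_0(\tilde\Omega)\cup\{\tl_q q\}$; these are $n$ linearly independent integral rays (independence of the first coordinates forces the $\tL^{-1}$-part to vanish, then the last forces the $q$-coefficient to vanish), hence the extreme rays of the simplicial $n$-cone, and every integral generating set arises through the free choices of the $\tl_i$ and of $\tl_q$. (One can cross-check via $\psi_0(\imath_0(\tilde{\omega}_i))=\tl_i v_i$ and $\psi_0(\tl_q q)=\tl_q q$, recovering the primary divisors of Theorem~\ref{thm: primary-secondary}.)

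Finally, for the parallelepiped identity I would parametrize $\Pi$ in $\Omega$-coordinates as $p+\sum_i\mu_i\imath_0(\tilde{\omega}_i)+\mu_n(\tl_q q)$ with $0\leq\mu_i,\mu_n<1$, whose first $n-1$ coordinates are $\tilde{p}+\sum_i\mu_i\tilde{\omega}_i$ and whose last coordinate is $\sum_i\mu_i\tl_i+\mu_n\tl_q$. A point of $\Pi$ is integral exactly when its first $n-1$ coordinates form an integer point $\tilde{\pi}$ of $\widetilde{\Pi}$ (this determines the $\mu_i=\lambda_i$ uniquely) \emph{and} its last coordinate is an integer. Because $\tilde{r}_n$ is integral, $\imath_D(\tilde{\pi})$ is already integral when $\tilde{\pi}\in\Z^{n-1}$, so its last coordinate $\sum_i\lambda_i\tl_i$ is an integer; integrality of the whole point then forces $\mu_n\tl_q=\ell\in\Z$ with $0\leq\ell<\tl_q$, yielding the point $\imath_D(\tilde{\pi})+\ell q$, and conversely every such point is an integer point of $\Pi$. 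Distinctness follows by projecting to the first $n-1$ coordinates to recover $\tilde{\pi}$ and then reading off $\ell$. I expect the main obstacle to be exactly this last-coordinate accounting: verifying that the coordinate-sum identity makes $\imath_D$ carry $\widetilde{\Pi}$ onto the $\ell=0$ slab of $\Pi$ and that the remaining integral freedom is precisely the $\tl_q$ copies indexed by $\ell$, with no stray integer points slipping in or out.
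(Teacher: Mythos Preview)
Your proposal is correct and follows essentially the same approach as the paper: both hinge on the identity $\tilde{r}_n\cdot f=-\vec{1}^{\mathsf T}\tL f$ and on the description~\eqref{eqn: K_D} of $\K_D$ to obtain parts~\ref{facet1} and~\ref{facet2}, and your treatment of part~\ref{facet3} simply unpacks explicitly what the paper obtains by citing Theorem~\ref{thm: poly}\ref{poly2} and Theorem~\ref{thm: primary-secondary}\ref{ps-2} (your cross-check via $\psi_0$ is precisely that route). The only cosmetic difference is that you argue directly with the extreme rays of $\tK_D$ and lift via $\imath_0$, whereas the paper passes through the bijection with primary divisors; the content is the same.
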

\begin{proof}  We have~$f\in\tK_D$ if and only if
  \[
    \tL f\geq-D|_{q=0}\quad\text{and}\quad \tilde{r}_n\cdot f\geq -D(q)-k
  \]
  for some~$k\in\N$.  The second condition is superfluous since~$k$ can
  be arbitrarily large.  The fact that~$\tK$ is a rational
  simplicial~$(n-1)$-cone with apex~$\tL^{-1}(-D|_{q=0})$ follows since~$\tL$ is 
  invertible.  This establishes part~\ref{facet1}.
  Part~\ref{facet2} then follows from part~\ref{facet1} and the description
  of~$\K_D$ given in~\eqref{eqn: K_D} in the proof of Theorem~\ref{thm: poly}.
  Since~$K_D=\imath_D{\tK_D}+\R_{\geq0}q$, part~\ref{facet3} follows from
  Theorem~\ref{thm: poly}~\ref{poly2} and Theorem~\ref{thm:
  primary-secondary}~\ref{ps-2}.
\end{proof}

\begin{example}\label{example: cone} Let graph~$G=C_3=K_3$ with vertex set~$v_1,v_2$, and $v_3=q$,
  and consider the divisor~$D=(1,0,-1)$ of degree~$0$.
  The Laplacian matrix is
  \[
L=
\left(\begin{array}{rrr}
    2&-1&-1\\
    -1&2&-1\\
    -1&-1&2
\end{array} \right).
  \]
  The cone~$\tK_D$ is defined by the system of inequalities
  \begin{equation*}
    \sysdelim..
    \syslineskipcoeff{1.3}
    \systeme{
      2x_1-x_2\geq-1,
      -x_1+2x_2\geq\hphantom{-}0.
    }
  \end{equation*}
To find generators for~$\tK_D$ and~$\K_D$, we use Proposition~\ref{prop:
facet}~\ref{facet3}.  We have
  \[
    \tL^{-1}=
	\frac{1}{3}
    \left(\begin{array}{cc}
	2&1\\
	1&2
    \end{array} \right).
  \]
  Multiply the first column of~$\tL^{-1}$ by~$\ord_q(v_1)=3$ to
  get~$\tilde{w}_1=(2,1)$.  Multiply the second column by~$\ord_q(v_2)=3$ to
  get~$\tilde{w}_2=(1,2)$.  Every set of integral extreme rays for~$\tK_D$ will be positive integer
  multiples of these.  Since~$D|_{q=0}=(1,0)$, the apex of the cone is
  \[
    \tL^{-1}(-D|_{q=0})=(-2/3,-1/3).
  \]
  Thus, 
  \[
    \tK_D=\left\{ (-2/3,-1/3)+\lambda_1(2,1)+\lambda_2(1,2) \right\}.
  \]
  Using the notation of Proposition~\ref{prop: facet}~\ref{facet2}, we
  have~$\tilde{r}_n=(-1,-1)$.  Therefore, taking~$\tl_q=1$, we get the set of
  extreme rays for~$\K_D$:
  \[
    \Omega=\left\{\imath_0(\tilde{\omega}_1),\imath_0(\tilde{\omega}_2),q \right\}
    =\left\{ (2,1,3),(1,2,3),(0,0,1) \right\},
  \]
  and
  \[
    \K_D=\left\{
      (-2/3,-1/3,0)+\lambda_1(2,1,3)+\lambda_2(1,2,3)+\lambda_3(0,0,1)
    \right\}.
  \]
  The cone~$\tK_D$ is pictured in Figure~\ref{fig: cone} along with its
  fundamental parallelogram~$\widetilde{\Pi}$ with respect
  to~$\{\tilde{\omega}_1,\tilde{\omega}_2\}$.
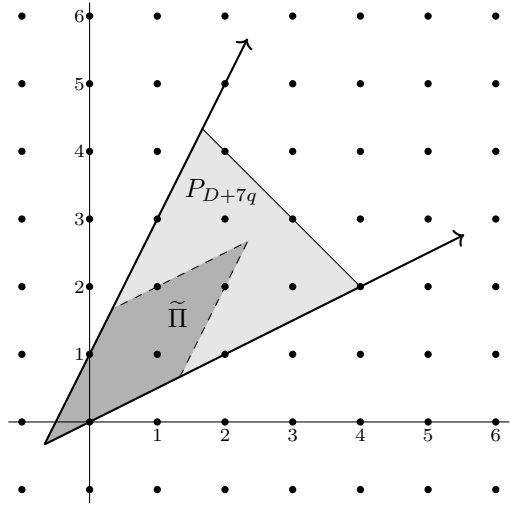
\begin{figure}[ht]
\centering
\begin{tikzpicture}[scale=0.9]
  \filldraw[color=gray!20] (-2/3,-1/3)--(5/3,13/3)--(4,2)--(-2/3,-1/3);
  \draw[thin] (5/3,13/3)--(4,2);
    \filldraw[color=gray!60]
    (-2/3,-1/3)--(-2/3+2,-1/3+1)--(-2/3+3,-1/3+3)--(-2/3+1,-1/3+2)--(-2/3,-1/3);
    \draw[dashed] (-2/3+2,-1/3+1)--(-2/3+3,-1/3+3)--(-2/3+1,-1/3+2);
  \draw (-1.2,0)--(6.2,0);
  \draw (0,-1.2)--(0,6.2);
  \foreach \x in {1,2,...,6}
  {
    \node at (-0.15,\x) {$\scriptstyle\x$};
    \node at (\x,-0.2) {$\scriptstyle\x$};
  }
  \foreach \i in {-1,0,...,6}
    \foreach \j in {-1,0,...,6}
    {
      \filldraw (\i,\j) circle(1.3pt);
    }
    \def\sone{3.1}
    \def\stwo{3.0}
    \draw[->,thick] (-2/3,-1/3)--(-2/3+\sone*2,-1/3+\sone*1);
    \draw[->,thick] (-2/3,-1/3)--(-2/3+\stwo*1,-1/3+\stwo*2);
    \node at (1.3,1.6) {$\widetilde{\Pi}$};
    \node at (1.95,3.4) {$P_{D+7q}$};
\end{tikzpicture}
\caption{The cone~$\tK_D$, a fundamental parallelogram, and the
polytope~$P_{D+7q}$ for the divisor~$D=(1,0,-1)$ on~$C_3$
(cf.~Example~\ref{example: cone}).}\label{fig: cone}
\end{figure}

There are three integer points in~$\widetilde{\Pi}$:
\[
  \widetilde{\Pi}\cap\Z^2=\left\{ (0,0),(0,1), (1,1) \right\},
\]
and the integer points of~$\Pi$ are just the ``lifts'' of these via~$\imath_D$:
\[
  \Pi\cap\Z^3=\left\{ (0,0,1),(0,1,2), (1,1,3) \right\}.
\]
Thus, the integer-point transform is
\[
  \sigma_{\K}(z_1,z_2,z_3)
  =\frac{z_3+z_2z_3^2+z_1z_2z_3^3}{(1-z_1^2z_2z_3^3)(1-z_1z_2^2z_3^3)(1-z_3)}.
\]
The~$\lambda$-sequence generating function is therefore
\begin{align*}
  \Lambda_{[D]}(z)&=\sigma_{\K}(1,1,z)=\frac{z+z^2+z^3}{(1-z^3)^2(1-z)}\\[8pt]
  &=z + 2 z^{2} + 3 z^{3} + 5 z^{4} + 7 z^{5} + 9 z^{6} + 12 z^{7} + 15 z^{8} +
  18 z^{9} + 22 z^{10} + 26 z^{11}+\dots
\end{align*}
This is exactly what we get from Corollary~\ref{cor: ps-genfun} using the
primary and~$[D]$-secondary divisors
\begin{align*}
  \primary &= \psi_0(\Omega)=\left\{ (3,0,0),(0,3,0),(0,0,1 \right\}\\
    \secondary_{[D]} &= \psi_D(\Pi\cap\Z^{n})=\left\{ (1,0,0),(0,2,0),(2,1,0)
    \right\}.
\end{align*}
(In Example~\ref{example: ps-complete} we calculated $\secondary_{[0]}=\left\{
(0,0,0),(1,1,1),(2,2,2)\right\}$ for $K_3=C_3$.
Proposition~\ref{prop: ps-computation}~\ref{ps-comp2} then says
$\secondary_{[D]}$ consists of standard representatives
for~$D+\secondary_{[0]}$ in ~$\Z/3\Z\times\Z/3\Z\times\Z/1\Z$, which
agrees with the above computation.) 

The generating function~$\Lambda_{[D]}(z)$ predicts, for example, that there
are~$12$ elements in~$|D+7q|=|(1,0,6)|$.  The polytope~$P_{D+7q}$ is defined by the system of
inequalities
  \begin{equation*}
    \sysdelim..
    \syslineskipcoeff{1.2}
    \systeme{
      2x-y\geq-1,
      x-2y\geq\hphantom{-}0,
      -x-y\geq-6.
    }
  \end{equation*}
In Figure~\ref{fig: cone}, we can see the~$12$ lattice points in~$P_{D+7q}$
corresponding to the elements of~$|D+7q|$.
\end{example}

\section{Invariant theory}\label{sect: molien}
The results in Section \ref{sect: primsec} may also be interpreted in terms of
the invariant theory for a representation of the dual group $\Jac^*(G)$. Through
this lens, primary and secondary divisors become primary and secondary
invariants, and $\Lambda_{D}(z)$ is given a substantially different expression
as a Molien series. 
\subsection{Background}
We first recall basic invariant theory for finite groups with~\cite{Stanley}
and\cite{Sturmfels} as references.  Given a matrix~$A\in\GL(\C^n)$ and a
polynomial~$f\in\polyring:=\C[x_1,\dots,x_n]$, define~$f\circ A$ by
\[
  (f\circ A)(x_1,\dots,x_n)=f(A\vec{x})
\]
where~$\vec{x}$ is the column vector~$[x_1,\dots,x_n]^t$.  Given a finite
subgroup~$\Gamma$ of~$\GL(\C^n)$ and a
character~$\chi\colon\Gamma\to\C^{\times}:=\C\setminus\left\{ 0 \right\}$,
define the {\em $\chi$-relative invariants of~$\Gamma$} to be elements of 
\[
  \polyring^{\Gamma}_{\chi}:=\left\{ f\in\polyring: f\circ\gamma=\chi(\gamma)f
  \text{ for all~$\gamma\in\Gamma$} \right\}.
\]
The~{\em $\chi$-relative Reynolds operator} is defined for each
polynomial~$f\in\polyring$ by
\[
  \mathcal{R}_{\chi}(f)=\frac{1}{|G|}\sum_{\gamma\in\Gamma}\overline{\chi}(\gamma)f\circ\gamma.
\]
It is easy to check that~$\mathcal{R}_{\chi}$ is linear in~$f$ and that~$f$
is~$\chi$-invariant if and only if~$\mathcal{R}_{\chi}(f)=f$.  In the
case~$\chi=\varepsilon$, the trivial character,
$\polyring^{\Gamma}:=\polyring^{\Gamma}_{\varepsilon}$ is a subring
of~$\polyring$, graded by degree, called the {\em invariant subring
of~$\Gamma$}.  It is generated by~$\mathcal{R}_{\varepsilon}(f)$ as~$f$ ranges
over all monomials of degree at most~$|\Gamma|$. The elements
of~$\polyring^{\Gamma}$ are simply called~{\em invariants of~$\Gamma$}
and~$\mathcal{R}:=\mathcal{R}_{\varepsilon}$ is the~{\em Reynolds operator
for~$\Gamma$}.  For arbitrary~$\chi$, the relative
invariants~$\polyring^{\Gamma}_{\chi}$ form a
graded~$\polyring^{\Gamma}$-module, generated by the homogeneous
polynomials~$\mathcal{R}_{\chi}(f)$ as~$f$ ranges over all monomials of degree
at most~$|\Gamma|$. 

There exist algebraically independent homogeneous
invariants~$p_1,\dots,p_n$ such that~$\polyring^{\Gamma}$ is a
finitely-generated free module over $\C[p_1,\dots,p_n]$.
For any character~$\chi$, if~$q_1,\dots,q_t$ are homogeneous polynomials
forming a~$\C$-basis for~$\polyring^{\Gamma}_{\chi}$ modulo the
submodule $\sum_{i=1}^np_i\polyring^{\Gamma}_{\chi}$, then
\[
  \polyring_{\chi}^{\Gamma}=\bigoplus_{i=1}^{t}q_i\C[p_1,\dots,p_n].
\]
The~$p_i$ are called {\em primary invariants} and are independent of~$\chi$.
The~$q_i$ are called {\em secondary (relative) invariants} and depend
on~$\chi$. The number of secondary invariants,~$t$, also
depends on~$\chi$ in general.  However, letting~$t_{\varepsilon}$ be the number of
secondary invariants for the trivial character, we have
\[
  t_{\varepsilon}|\Gamma|=\prod_{i=1}^n\deg(p_i).
\]

The {\em Hilbert series} for~$\polyring^{\Gamma}_{\chi}$ is
\[
  \Phi_{\Gamma,\chi}(z):=\sum_{d\geq
  0}\dim_{\C}(\polyring_{\chi,d}^{\Gamma})z^d,
\]
where~$\polyring_{\chi,d}^{\Gamma}$ denotes the~$d$-th graded piece
of~$\polyring_{\chi}^{\Gamma}$.
The Hilbert series is also known as the (relative) {\em Molien series} for~$\Gamma$ due to a
theorem of Molien which states that
\begin{equation} \label{eqn: Molien}
  \Phi_{\Gamma,\chi}(z) =
  \frac{1}{|\Gamma|}\sum_{\gamma\in\Gamma}\frac{\overline{\chi(\gamma)}}{\det(I_n-z\gamma)}.
\end{equation}

\subsection{Linear systems}\label{subsection: linear systems} Order the vertices~$v_1,\dots,v_n$ of~$G$, and
fix~$q=v_n$. To see the relevance of invariant theory to our problem, start
with the sequence of projections
\begin{center}
\begin{tikzcd}[row sep = 1pt ]
\Z^n=\Div(G)\arrow[r,twoheadrightarrow]&\Pic(G)\arrow[r,twoheadrightarrow]&\Jac(G)\\
 \qquad  D\arrow[r,mapsto,shorten >= 14pt,shorten <= 14pt,xshift=2pt ]
 &\left[D\right]\arrow[r,mapsto,shorten >= 10pt,shorten <= 10pt,xshift=2pt ]
 &\left[D-\deg(D)q\right].
\end{tikzcd}
\end{center}
Apply the functor $\Hom(\,\cdot\,,\C^{\times})$ to get a sequence of dual groups
\[
  \Jac(G)^*\hookrightarrow\Pic(G)^*\hookrightarrow(\C^\times)^n\subset GL(\C^n),
\]
identifying~$(\C^{\times})^n$ with diagonal matrices having nonzero diagonal
entries.  Define~$\rho$ to be the composition of these mappings:
\begin{align}\label{rho}
  \rho\colon\Jac(G)^{*}\ &\xrightarrow{\qquad}\qquad\GL(\C^n)\\
  \nonumber
  \chi\qquad
  &\mapsto\quad\diag(\chi([v_1-q]),\chi([v_2-q]),\dots,\chi([v_{n-1}-q]),1).
\end{align}
\begin{thm}\label{thm: invariants}
  Consider each~$[D]\in\Jac(G)$ as a character of~$\Gamma:=\im(\rho)\subset\GL(\C^n)$
  via~$[D](\rho(\chi)):=\chi([D])$ for each~$\chi\in\Jac(G)^{*}$.  Then 
  \begin{enumerate}[label=\rm{(\arabic*)},leftmargin=*]
    \item\label{inv1} 
\[
  \textstyle \left\{x^{E}:=\prod_{i=1}^nx_i^{E(v_i)}: E\in\E_{[D]} \right\}
\]
is a~$\C$-basis for the relative invariants~$\polyring^{\Gamma}_{[D]}$,
\item\label{inv2} $\polyring=\bigoplus_{[D]\in\Jac(G)}\polyring^{\Gamma}_{[D]}$, and
\item\label{inv3}  the correspondence~$E\mapsto x^{E}$ for effective divisors~$E$ gives a
  bijection between systems of primary and~$[D]$-secondary divisors and systems
  of monomial primary and~$[D]$-relative invariants.
  \end{enumerate}
\end{thm}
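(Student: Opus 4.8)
The plan is to exploit the fact that $\Gamma=\im(\rho)$ consists of diagonal matrices, so that the monomials $x^{E}$ (indexed by effective divisors $E\in\N^{n}$, which form a $\C$-basis of $\polyring$) are simultaneous eigenvectors for the $\Gamma$-action. The crux is to compute the weight of each monomial. For $\gamma=\rho(\chi)$ I would write $x^{E}\circ\gamma=\bigl(\prod_{i=1}^{n}\chi([v_i-q])^{E(v_i)}\bigr)x^{E}$, where the $i=n$ factor is trivial because $\rho(\chi)$ fixes the last coordinate (equivalently $\chi([v_n-q])=\chi([0])=1$). Since $\chi$ is a homomorphism, this scalar equals $\chi\bigl(\sum_{i=1}^{n-1}E(v_i)[v_i-q]\bigr)$, and the identity $\sum_{i=1}^{n-1}E(v_i)(v_i-q)=E-\deg(E)q$ identifies the weight as exactly the character $[E-\deg(E)q]$ of $\Gamma$. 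Hence $x^{E}\in\polyring^{\Gamma}_{[D]}$ precisely when $[E-\deg(E)q]=[D]$, i.e.\ when $E\in\E_{[D]}$.

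With this identification, parts \ref{inv1} and \ref{inv2} are the standard weight-space decomposition for a diagonalizable abelian action. As $\Gamma$ acts by commuting diagonal matrices, $\polyring$ is the direct sum of its weight spaces, each spanned by the monomials of the given weight; since distinct monomials are linearly independent, $\{x^{E}:E\in\E_{[D]}\}$ is a $\C$-basis of $\polyring^{\Gamma}_{[D]}$, giving \ref{inv1}. For \ref{inv2} I would first observe that $\rho$ is injective, because the values $\chi([v_i-q])$ for $i<n$ determine $\chi$ (the $[v_i-q]$ generate $\Jac(G)$ by \eqref{eqn: Jac(G)}); thus $\Gamma\cong\Jac(G)^{*}$, and $[D]\mapsto\bigl(\rho(\chi)\mapsto\chi([D])\bigr)$ is the canonical isomorphism $\Jac(G)\xrightarrow{\sim}\Gamma^{*}$. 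The weights of $\Gamma$ are therefore exactly the classes $[D]\in\Jac(G)$ (each realized, since every class has an effective representative), and the weight decomposition reads $\polyring=\bigoplus_{[D]\in\Jac(G)}\polyring^{\Gamma}_{[D]}$.

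For part \ref{inv3} the idea is to match the primary-secondary decomposition of Theorem~\ref{thm: primary-secondary} with the module decomposition of relative invariants. In the forward direction, given primary divisors $\primary$ and secondary divisors $\secondary_{[D]}$, each $x^{P}$ with $P\in\primary$ lies in $\polyring^{\Gamma}=\polyring^{\Gamma}_{[0]}$, and the $P$ are linearly independent, so the $x^{P}$ are algebraically independent homogeneous invariants. The unique decomposition $E=F+\sum_{P}a_{P}P$ translates, via $x^{E}=x^{F}\prod_{P}(x^{P})^{a_{P}}$, into $\polyring^{\Gamma}_{[D]}=\bigoplus_{F\in\secondary_{[D]}}x^{F}\,\C[\{x^{P}\}_{P\in\primary}]$. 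Taking $[D]=[0]$ exhibits $\{x^{P}\}$ as a system of monomial primary invariants, and for general $[D]$ the displayed decomposition exhibits $\{x^{F}\}$ as monomial secondary $[D]$-relative invariants.

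The converse direction is where I expect the real work. Suppose $\{x^{P_i}\}_{i=1}^{n}$ is a system of monomial primary invariants and $\{x^{F_j}\}$ a system of monomial secondary $[D]$-relative invariants. Since the $x^{P_i}$ are invariants, each $P_i\in\E_{[0]}$; and because the relative invariants for any character are free over $\C[\{x^{P_i}\}]$ (by the general invariant theory recalled above) and carry monomial $\C$-bases, I can realize monomial secondary invariants for every class $[D']$, obtaining decompositions $\polyring^{\Gamma}_{[D']}=\bigoplus x^{F'}\C[\{x^{P_i}\}]$. Reading off monomials---and using that in a direct sum of such monomial modules each monomial $x^{E}$ lies in a unique summand and a unique $\C[\{x^{P_i}\}]$-orbit---yields, for every $[D']$, a unique decomposition $E=F'+\sum_i a_iP_i$ of each $E\in\E_{[D']}$ with $a_i\in\N$. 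This is precisely the data of Theorem~\ref{thm: primary-secondary}\,\ref{ps-1}, so by the uniqueness in \ref{ps-2} the set $\{P_i\}$ must be a set of primary divisors and $\{F_j\}$ the corresponding $[D]$-secondary set. The main obstacle is therefore the careful justification that freeness of the relative-invariant modules over the monomial ring $\C[\{x^{P_i}\}]$ can be witnessed by monomial bases simultaneously across all classes, so that the purely algebraic decomposition is forced to coincide with the combinatorial one and the uniqueness theorem can be invoked.
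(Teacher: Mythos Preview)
Your proposal is correct and follows the same approach as the paper: both compute that $x^{E}\circ\rho(\chi)=\chi([E-\deg(E)q])\,x^{E}$, from which the claims follow via the weight-space decomposition for the diagonal $\Gamma$-action. The paper's proof is considerably terser---after this computation it simply asserts that parts~\ref{inv1}, \ref{inv2}, and \ref{inv3} follow---so your explicit verification of the injectivity of $\rho$ and your careful treatment of the converse direction in part~\ref{inv3} supply detail that the paper elides.
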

\begin{proof} An arbitrary element of~$\polyring$ may be written
  as~$f=\sum^{\,}_{E}a_Ex^E$ where the sum is over all effective divisors of~$G$
  and all but finitely many~$a_E$ are zero.  Let~$\chi\in\Jac(G)^{*}$.  Then
  \allowdisplaybreaks
  \begin{align*}
    f\circ\rho(\chi)
    &= \sum_{E}a^{\,}_E(x^E\circ\rho(\chi))\\
    &= \sum_{E}a^{\,}_E\textstyle\left(\prod_{v\in V}\chi([v-q])^{E(v)} \right)x^E\\
    &= \sum_{E}a^{\,}_E\textstyle\,\chi\!\left(\sum_{v\in V}E(v)([v-q])\right)x^E\\
    &= \sum_{E}a^{\,}_E\textstyle\,\chi\!\left([E-\deg(E)q]\right)x^E.
  \end{align*}
  Therefore,~$f\in\polyring^{\Gamma}_{[D]}$ if and only if for each~$E$ such
  that~$a^{\,}_E\neq0$, we have $\chi([E-\deg(E)q])=\chi([D])$ for all~$\chi$,
  or equivalently, $[E-\deg(E)q]=[D]$, i.e.,~$E\in\E_{[D]}$.
 Parts~\ref{inv1},~\ref{inv2}, and~\ref{inv3} follow.  Part~\ref{inv2} reflects
 the fact that the~$\E_{[D]}$ partition the set of effective divisors.
\end{proof}
As an immediate corollary, we may express $\lambda$-sequence
generating functions as a Molien series. These expressions differ from those given
 in Corollary~\ref{thm: primary-secondary} and Theorem~\ref{thm:
polyhedral generating function} (which are identical to each other).
\begin{cor}\label{cor: lambda-molien}
  Let~$[D]\in\Jac(G)$.  The generating function for the $\lambda$-sequence
  for~$\E_{[D]}$ is given by the Molien series
  \[
    \Lambda_{[D]}(z)=\Phi_{\Gamma,[D]}(z)
    =\frac{1}{|\Jac(G)|}\sum_{\chi\in\Jac(G)^{*}}\frac{\overline{\chi([D])}}{\det(I_n-z\rho(\chi))}.
  \]
\end{cor}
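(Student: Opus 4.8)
The plan is to identify $\Lambda_{[D]}(z)$ with the Hilbert (Molien) series $\Phi_{\Gamma,[D]}(z)$ of the relative invariants and then read off the second equality directly from Molien's theorem \eqref{eqn: Molien}. First I would note that the monomial $x^E$ attached to an effective divisor $E$ has total degree $\sum_{i=1}^n E(v_i)=\deg(E)$, so the $\C$-basis of $\polyring^{\Gamma}_{[D]}$ provided by Theorem~\ref{thm: invariants}\ref{inv1} is homogeneous and compatible with the grading by degree. Hence the $d$-th graded piece $\polyring^{\Gamma}_{[D],d}$ has dimension equal to the number of divisors in $\E_{[D]}$ of degree $d$.

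Next I would match this count with the $\lambda$-sequence. Taking $D$ to be the degree-$0$ representative of its class, an effective divisor $E\in\E_{[D]}$ of degree $k$ satisfies $E-kq\sim D$, i.e. $E\in|D+kq|$; conversely every element of $|D+kq|$ has degree $k$ since $\deg(D+kq)=k$. Thus the number of degree-$k$ divisors in $\E_{[D]}$ is exactly $\lambda_{[D]}(k)$, and summing over $k$ gives $\Phi_{\Gamma,[D]}(z)=\sum_{k\geq0}\lambda_{[D]}(k)z^k=\Lambda_{[D]}(z)$, establishing the first asserted equality.

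For the second equality I would apply Molien's theorem \eqref{eqn: Molien} to the finite group $\Gamma=\im(\rho)\subset\GL(\C^n)$ with the character $[D]$, obtaining $\Phi_{\Gamma,[D]}(z)=\tfrac{1}{|\Gamma|}\sum_{\gamma\in\Gamma}\overline{[D](\gamma)}/\det(I_n-z\gamma)$. To reindex this sum by $\Jac(G)^{*}$, I would check that $\rho$ is an isomorphism onto $\Gamma$: it is surjective by definition, and it is injective because $\rho(\chi)=I$ forces $\chi([v_i-q])=1$ for all $i\neq n$, and the classes $[v-q]$ generate $\Jac(G)$ by \eqref{eqn: Jac(G)}, so $\chi$ is trivial. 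In particular $|\Gamma|=|\Jac(G)^{*}|=|\Jac(G)|$. Substituting $\gamma=\rho(\chi)$ and using the definition $[D](\rho(\chi))=\chi([D])$ then converts the sum into $\tfrac{1}{|\Jac(G)|}\sum_{\chi\in\Jac(G)^{*}}\overline{\chi([D])}/\det(I_n-z\rho(\chi))$, as claimed; since each $\rho(\chi)$ is diagonal, every determinant here is in fact an explicit product $\prod_{i<n}(1-z\chi([v_i-q]))\,(1-z)$.

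The only point requiring real care is the index translation in the last paragraph: one must confirm that $\rho$ is a bijection, so that the normalizing factor $1/|\Jac(G)|$ is correct and each character is counted exactly once, and that $[D]$ pulls back correctly along $\rho$. There is no genuine obstacle beyond this bookkeeping — everything else is a routine transfer of the degree grading through the homogeneous monomial basis of Theorem~\ref{thm: invariants}.
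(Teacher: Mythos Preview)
Your argument is correct and follows the same route the paper intends: Theorem~\ref{thm: invariants}\ref{inv1} gives a homogeneous monomial basis identifying $\dim_{\C}\polyring^{\Gamma}_{[D],k}$ with $\lambda_{[D]}(k)$, and then Molien's formula~\eqref{eqn: Molien} yields the stated expression. The paper records this as an immediate corollary without spelling out the reindexing from $\Gamma$ to $\Jac(G)^*$; your verification that $\rho$ is injective (using that the classes $[v-q]$ generate $\Jac(G)$ via~\eqref{eqn: Jac(G)}) is exactly the bookkeeping needed to justify replacing $|\Gamma|$ by $|\Jac(G)|$ and summing over characters, and is a welcome addition.
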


To compute with Corollary~\ref{cor: lambda-molien} concretely,  use
integer row and column operations to reduce~$\tL$ to diagonal form (e.g., Smith
normal form), denoting the result by~$M:=\diag(m_1,\dots,m_{n-1})$.  Record the
row and column operations in matrices~$U$ and~$W$ so that~$U\tL W = M$.
Then~$U$ descends to an isomorphism
\[
  \Jac(G)\simeq\Z^{n-1}/\im_{\Z}\tL\xrightarrow{\ U\ }\prod_{i=1}^{n-1}\Z/m_i\Z.
\]
Having identified~$\Jac(G)$ with~$R:=\prod_{i=1}^{n-1}\Z/m_i\Z$, we now describe
the characters.  For each~$r\in R$, choose a representative
lifting~$(r_1,\dots,r_{n-1})\in\Z^{n-1}$, and let $ \tilde{r} :=
\left(\frac{r_1}{m_1},\dots,\frac{r_{n-1}}{m_{n-1}} \right)$.  Define the
character~$\tilde{\chi}_r\in R^*$ by
\[
  \tilde{\chi}_r(a) :=\exp(2\pi i\,(\tilde{r}\cdot a))
\]
for each~$a\in R$.  Then define~$\chi_r\in\Jac^*(G)$
by
\[
  \chi_r([D]):=\tilde{\chi}_r(U(D|_{q=0}))
\]
for each~$[D]\in\Jac(G)$.  It follows that
\[
  \rho(\chi_r)=\diag(\tilde{\chi}_r(u_1),\tilde{\chi}_r(u_2),\dots,\tilde{\chi}_r(u_{n-1}),1)
\]
where~$u_j$ is the~$j$-th column of~$U$.  In all of the above, if~$m_k=1$ for
some~$k$, then the~$k$-th factor of~$R$ and the~$k$-th {\em row} of~$U$ may be
dropped.

\subsection{Examples}  The following examples use Corollary~\ref{cor:
lambda-molien} to compute~$\lambda$-generating functions.  For a direct
application of the relation between polynomial invariants and linear systems
exhibited in Theorem~\ref{thm: invariants}, see Section~\ref{sect: necklaces}.
\Needspace{3\baselineskip}
\subsubsection{Trees}  If~$G$ is a tree, then~$\Jac(G)$ is trivial,
and~$\Jac(G)^*$ contains only the trivial character.  So
Corollary~\ref{cor: lambda-molien} says~$\Lambda_{[0](z)}=1/(1-z)^{n}$.

\subsubsection{Diamond graph} Now let~$G$ be the diamond graph of
Figure~\ref{fig: diamond graph}.  Letting
\[
U = 
\left(\begin{array}{rrr}
    0 & 0 & 1 \\
    0 & 1 & 0 \\
    1 & -1 & -4
\end{array}\right)
\quad\text{and}\quad
W = 
\left(\begin{array}{rrr}
    3 & 1 & 5 \\
    2 & 1 & 3 \\
    3 & 1 & 4
\end{array}\right),
\]
we have~$U\tL W =\diag(1,1,8)$ and the corresponding isomorphism
\begin{align*}
  \Jac(G)\simeq\Z^{3}/\im_{\Z}\tL&\to\Z/8\Z\\
  (a,b,c)&\mapsto a-b-4c.
\end{align*}
The divisor class~$[v_1-q]$ generates~$\Jac(G)$. Let~$D_j=j[v_1-q]$
for~$j=0,\dots,7$, and let $\omega=\exp(2\pi i/8)$.  Then, by
Corollary~\ref{cor: lambda-molien}, 
\[
  \Lambda_{[D_j]} =
  \frac{1}{8(1-z)}\sum_{k=0}^7\frac{\omega^{-jk}}{(1-\omega^{k}z)(1-\omega^{-k}z)(1-\omega^{-4k}z)}
\]
for~$j=0,\dots,7$.
\subsubsection{Cycle graphs} Now let~$G=C_n$ be a cycle graph using the notation
of Example~\ref{example: ps-cycle}. Let~$I_n$ be the~$n\times n$ identity matrix,
and let~$U$ be the matrix formed by replacing
the last row of the~$-I_n$ with the row
$(1,2,\dots,(n-2),-1)$.  Let~$W$ be the~$n\times n$ matrix with
$W_{ij}=-\min\left\{ i,j \right\}$ (so~$W$ starts with a row of~$-1$s and ends
  with the row $(-1,-2,\dots,-(n-1))$).  Then~$U\tL W=\diag(1,1,\dots,1,n)$,
  and multiplication by~$U$ gives the isomorphism
\begin{align*}
  \Jac(C_n)\simeq\Z^{n-1}/\im_{\Z}\tL&\to\Z/n\Z\\
  (a_1,\dots,a_{n-1})&\mapsto a_1+2a_2+\dots+(n-2)a_{n-2}+(n-1)a_{n-1}.
\end{align*}
The divisor class~$[v_{1}-q]$ generates~$\Jac(C_n)$. Let $D_j=j[v_{1}-q]$
for~$j=0,\dots,n-1$, and let~$\omega=\exp(2\pi i/n)$.  Then by
Corollary~\ref{cor: lambda-molien},
\[
  \Lambda_{[D_j]}(z)
  =
  \frac{1}{n}\sum_{k=0}^{n-1}\frac{\omega^{-jk}}{\prod_{t=0}^{n-1}(1-\omega^{tk}z)}
\]
for~$j=0,\dots,n-1$.  In particular,
\[
  \Lambda_{[0]}(z)=\frac{1}{n}\sum_{k=0}^{n-1}\frac{1}{(1-z^{n/\gcd(n,k)})^{\gcd(n,k)}}
  =\frac{1}{n}\sum_{d|n}\frac{\phi(d)}{(1-z^d)^{n/d}}
\]
where~$\phi$ is the Euler totient function. We shall explore this example
further in Section~\ref{sect: necklaces}.  

\subsubsection{Complete graphs} Let~$G=K_n$.  Perform integer column operations
to bring~$\tL$ into Hermite normal form~$H$ as described in Example~\ref{example:
ps-complete}. Next, let~$U$ be the matrix formed by replacing the first column
of the identity matrix~$I_{n-1}$ by the column $(1,-1,-1,\dots,-1)$.
Then~$UH = M:=\diag(1,n,n,\dots,n)$.  Since~$M_{1,1}=1$, alter~$U$ by removing
its first row, and we get an
isomorphism
\[
  \Jac(K_n)\simeq\Z^{n-1}/\im_{\Z}\tL\xrightarrow{U}R:=\left(\Z/n\Z  \right)^{n-2}.
\]
Let~$\omega:=\exp(2\pi i/n)$.  Then for each~$r\in R$, we have the
character~$\chi_r$ such that~$\chi_r(a)=\omega^{r\cdot a}$ for
each $a\in\Z^{n-1}/\im_{\Z}\tL$.  By Corollary~\ref{cor: lambda-molien}, for
each~$[D]\in\Jac(G)$, writing~$D=(d_1,\dots,d_n)$, 
\[
  \Lambda_{[D]}(z)=\frac{1}{n^{n-2}(1-z)}\sum_{r\in
  (\Z/n\Z)^{n-2}}\frac{\omega^{-r\cdot(d_2-d_1,\dots,d_{n-1}-d_1)}}{(1-\omega^{-r_1-\dots-r_{n-2}}z)(1-\omega^{r_1}z)(1-\omega^{r_2}z)\dots(1-\omega^{r_{n-2}}z)}.
\]

\begin{remark} Corollaries~\ref{cor: ps-genfun} and~\ref{cor: lambda-molien}
  give two ways of expressing the generating function~$\Lambda_{[D]}(z)$.  One
  sums over elements of~$\secondary_{[D]}$, and the other sums over elements
  of~$\Jac^*(G)$.  In practice, one of these two expressions may be much simpler
  than the other.  For instance, the complete graph~$K_n$ has a large Jacobian
  group,~$|\Jac(K_n)|=n^{n-2}$, however we can find a set of secondary divisors
  with only~$n$ elements.  So the expression for~$\Lambda_{[D]}(z)$ coming from
  Corollary~\ref{cor: lambda-molien} will have~$n^{n-2}$ summands while the
  numerator~$S$ appearing in Corollary~\ref{cor: ps-genfun} with have only~$n$
  terms.  In the case of the cyclic graph~$C_n$, we have the
  opposite situation:~$|\Jac(C_n)|=n$ and there are~$n^{n-2}$ secondary divisors
  (taking~$\tl_v=n$ for all vertices~$v\neq 1$).
\end{remark}

\section{Cycle graphs and necklaces}\label{sect: necklaces}

\subsection{Necklaces} Let~$C$ be a finite set of~{\em colors} and let~$C^n$
denote the set of all words (strings) of length~$n$ with letters in~$C$.
Let~$\sigma$ be the cyclic shift operator on~$C^n$:
\[
  \sigma(c_1\dots c_n) = c_n c_1\dots c_{n-1}.
\] 
Define an equivalence relation on~$C^n$ by letting~$w\sim w'$
if~$w=\sigma^i(w')$ for some integer~$i$.  A {\em necklace} of length~$n$ on the
color set~$C$ is an equivalence class $[c_1\dots c_n]\in C^n/\!\!\sim$.  We
think of each~$c_i$ as being a bead of color~$c_i$.  The~{\em period} of a
necklace~$N=[w]$ is the smallest positive integer~$i$ such that~$\sigma^i(w)=w$.

\begin{defn} Let~$m$ be a positive integer. A necklace is {\em
$m$-divisible} if its period is divisible by~$m$. (See Figure~\ref{fig:
necklaces} for an example.) 
\end{defn}

A {\em binary necklace} is a necklace for which~$C$ consists of two colors
(which we take to be black and white).  Let~$\neck(n,k)$ denote the set
of binary necklaces with~$n$ black beads and~$k$ white beads.  

\begin{defn} Let~$N=[bw^{a_1}bw^{a_2}\cdots bw^{a_n}]\in\neck(n,k)$.  The
  {\em code} for~$N$ is the necklace with~$n$ beads and with colors~$\left\{
  a_1,\dots,a_n \right\}\subset\N$,
  \[
    \mathrm{code}(N):=[a_1\dots a_n].
  \]
\end{defn}

\subsection{Linear systems on cycle graphs} We use the notation of
Section~\ref{example: ps-cycle}.  Let~$C_n$ be the cycle graph with vertices
$v_1,\dots,v_n$ around the cycle, and take~$q:=v_n$.  Working with subscripts
modulo~$n$, let~$D_i=v_i-q\in\Div(C_n)$ for all~$i\in\Z$.  Then~$\Jac(C_n)$ is the
cyclic group of order~$n$ generated by~$[D_1]$ with~$j[D_1]=[D_j]$ for all~$j$. The dual
group~$\Jac(C_n)^*$ is generated by the character~$\chi_1$ determined
by $\chi_1([D_1])=\omega$ where~$\omega$ is a primitive~$n$-th root of unity.  The
representation~$\rho\colon\Jac(G)^*\to\GL(\C^n)$ described in
Section~\ref{subsection: linear systems} is
determined by
\[
  \rho(\chi_1)=\diag(\omega,\omega^2,\dots,\omega^n).\footnote{We often
  write~$\omega^n$ instead of~$1$ for consistency of notation.}
\]
Changing coordinates on~$\C^n$, we conjugate this diagonal representation
into a permutation representation.  In detail, for any~$x\in\C$,
let~$v(x):=(x,x^2,\dots,x^n)$ and let~$A$ be the matrix with rows
$v(\omega^n),v(\omega),v(\omega^{2}),\dots,v(\omega^{n-1})$.  We have
$A^{-1}=\frac{1}{n}\,\overline{A}^{\,t}$. Conjugate by~$A$ to
get~$\rho':\Jac(G)^*\to\GL(\C^n)$ where $\rho'(\chi)=A\rho(\chi)A^{-1}$ for
all~$\chi\in\Jac^*(C_n)$.  Then~$\rho'(\chi_1)$ is the permutation matrix~$P$
such that for each standard basis vector~$e_1,\dots,e_n$, we have~$Pe_i=e_{i-1}$
(with subscripts modulo~$n$).  Let~$\Gamma=\im(\rho)$ and~$\Gamma':=\im(\rho')$,
and define indeterminates~$y=Ax$.  For~$f(y)\in\polyringy$,
\[
  (f\circ P)(y_1,\dots,y_n)=f(y_2,y_3,\dots,y_{n},y_1).  
\] 
It follows that for each~$[D_j]\in\Jac(G)$ (considered as a character on~$\Jac(G)^*$), there is
  an induced, degree-preserving, linear isomorphism of relative invariant rings
  \begin{align}\label{eqn: permutation iso}
  \polyringy^{\Gamma'}_{[D_j]}&\xrightarrow{\sim}\polyring^{\Gamma}_{[D_j]}\\\nonumber
  f\quad&\mapsto f\circ A.
\end{align}

For each $N\in\neck(n,k)$, fix a representative word~$a_N$ for the
necklace~$\code(N)$, and use the Reynolds operator to define \[
f_N(y):=\mathcal{R}_{[D_j]}(y^{a_N})=\frac{1}{n}\sum_{i=1}^n\omega^{-ij}y^{\sigma^i(a_N)},
\] where~$\sigma$ is the cyclic shift operator defined earlier
and~$y^{a_N}:=\prod_{i=1}^ny_i^{a_{N,i}}$.  

\begin{thm}\label{thm: permutation invariants}  Let~$j\in[n]$ and define
\[
  m_j:=\frac{n+k}{\gcd(n,k,j)}\quad\text{and}\quad
  n_j:=\frac{n}{\gcd(n,j)}=\mathrm{order}(\omega^j).
\]
\begin{enumerate}[label=\rm{(\arabic*)},leftmargin=*]
  \item $N\in\neck(n,k)$ is~$m_j$-divisible if and only if~$\code(N)$
    is~$n_j$-divisible.
  \item\label{basis part2}  The set $\left\{ f_N \right\}$
    as~$N$ ranges over all~$m_j$-divisible~$N\in\neck(n,k)$ is a basis for
    the~$[D_j]$-relative invariants of degree~$k$ for the permutation
    representation~$\rho'$.
\end{enumerate}
\end{thm}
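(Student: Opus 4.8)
The plan is to dispatch part~(1) as a purely combinatorial/number-theoretic statement about the code map, and then to prove part~(2) by a direct evaluation of the relative Reynolds operator on a monomial, reading off vanishing, independence, and spanning in turn. No dimension count will be needed.

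\textbf{Part~(1).} First I would record that $N\mapsto\code(N)$ is a bijection from $\neck(n,k)$ onto the length-$n$ necklaces with colors in $\N$ summing to $k$, and that it transforms periods by
\[
  \text{period}(\code(N))=\tfrac{n}{\,n+k\,}\,\text{period}(N).
\]
This comes from writing a period-$p$ representative of $N$ as $n/p'$ repetitions of a block $bw^{a_1}\cdots bw^{a_{p'}}$, where $p':=\text{period}(\code(N))$; such a block has length $p'(n+k)/n$, which is exactly the binary period $p$. Writing $p:=\text{period}(N)$, the integrality of $p'=pn/(n+k)$ is precisely $(n+k)\mid pn$, and since $pn+pk=p(n+k)$ this also forces $(n+k)\mid pk$. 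Both divisibility conditions now unwind to statements about $p$: one checks $m_j\mid p\iff(n+k)\mid p\gcd(n,k,j)$ and $n_j\mid p'\iff(n+k)\mid p\gcd(n,j)$. The implication $(n+k)\mid p\gcd(n,k,j)\Rightarrow(n+k)\mid p\gcd(n,j)$ is immediate from $\gcd(n,k,j)\mid\gcd(n,j)$; for the converse I would combine $(n+k)\mid p\gcd(n,j)$ with $(n+k)\mid pk$ to get $(n+k)\mid\gcd(p\gcd(n,j),pk)=p\gcd(n,j,k)$, which is what is wanted.

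\textbf{Part~(2), setup.} Under $\rho'$ the group $\Gamma'=\langle P\rangle$ has order $n$, and by the definition of the character $[D_j]$ on $\Gamma'$ we have $[D_j](P)=\chi_1([D_j])=\omega^j$; hence the degree-$k$ piece of $\polyringy^{\Gamma'}_{[D_j]}$ is the span of the degree-$k$ polynomials $f$ with $f\circ P=\omega^j f$. Now $P$ permutes the degree-$k$ monomials $y^a$ (with $\sum_i a_i=k$) by $y^a\mapsto y^{\sigma(a)}$, so their $\langle\sigma\rangle$-orbits biject with $\neck(n,k)$ through the code, the orbit of $a_N$ having size $p':=\text{period}(\code(N))$. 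By construction $f_N=\mathcal{R}_{[D_j]}(y^{a_N})$ lies in $\polyringy^{\Gamma'}_{[D_j]}$ and is homogeneous of degree $\sum_i a_{N,i}=k$.

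\textbf{Part~(2), core.} The key is to see when $f_N\ne0$. Grouping the sum $\tfrac1n\sum_{i=0}^{n-1}\omega^{-ij}y^{\sigma^i(a_N)}$ by the residue of $i$ modulo $p'$, the coefficient of $y^{\sigma^r(a_N)}$ is $\tfrac1n\omega^{-rj}\sum_{l=0}^{n/p'-1}(\omega^{-p'j})^l$. Since $p'\mid n$ the top power is $\omega^{-nj}=1$, so this geometric sum vanishes unless $\omega^{p'j}=1$, in which case every coefficient is nonzero; thus $f_N\ne0\iff n\mid p'j\iff n_j\mid p'$, i.e.\ iff $\code(N)$ is $n_j$-divisible, which by part~(1) is iff $N$ is $m_j$-divisible. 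Linear independence of the nonzero $f_N$ is then free: $\code$ is injective, so distinct necklaces give distinct orbits and the $f_N$ have pairwise-disjoint monomial supports. For spanning, recall $\mathcal{R}_{[D_j]}$ is a linear projection onto the relative invariants, so its images on the spanning set of degree-$k$ monomials span the degree-$k$ relative invariants; since $\mathcal{R}_{[D_j]}(y^a)$ is a scalar multiple of $f_N$ whenever $\code(N)=[a]$, the family $\{f_N\}_{N\in\neck(n,k)}$ spans, and discarding its zero members leaves a spanning set indexed exactly by the $m_j$-divisible necklaces. Independence together with spanning yields the asserted basis.

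\textbf{Main obstacle.} I expect the delicate point to be the period bookkeeping in part~(1)---establishing $\text{period}(\code(N))=\tfrac{n}{n+k}\text{period}(N)$ rigorously, in particular arguing that any color-preserving rotation of the binary necklace carries black beads to black beads and hence induces a genuine cyclic shift of the code, so that minimal periods correspond---and then aligning this with the two $\gcd$ conditions. Once part~(1) is in hand, part~(2) reduces to the short geometric-sum computation above plus the standard fact that the Reynolds operator is a projection.
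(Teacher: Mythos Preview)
Your argument is correct and broadly parallel to the paper's, with the same core ingredients: the geometric-sum computation showing $f_N\ne0$ exactly when $\code(N)$ is $n_j$-divisible, and linear independence via disjoint monomial supports. Two points of comparison are worth noting. For part~(1), the paper takes a slightly different route: rather than working with the period ratio $\text{period}(\code(N))=\tfrac{n}{n+k}\text{period}(N)$ and manipulating the two $\gcd$ conditions directly, it writes $n=p\ell$ with $\ell=\text{period}(\code(N))$ and shows that \emph{both} divisibility conditions reduce to the single statement $p\mid j$; this is a bit more transparent and avoids the auxiliary fact $(n+k)\mid pk$ that you invoke. For the spanning step in part~(2), your argument is cleaner than the paper's: you use directly that the relative Reynolds operator is a projection onto $\polyringy^{\Gamma'}_{[D_j]}$ and that $\mathcal{R}_{[D_j]}(y^{\sigma^r(a_N)})=\omega^{rj}f_N$, whereas the paper runs a minimal-counterexample argument (take $h$ not in the span with fewest terms, show a term of $h$ forces an $f_N$ to appear, subtract). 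Both routes work, but yours is the more natural one here.
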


\begin{proof}
Let~$N\in\neck(n,k)$, and let~$a=a_N=a_1\dots a_n$ be a representative word
for $\code(N)$.  We first show that~$N$ is~$m_j$-divisible if and only
if~$\code(N)$ is~$n_j$-divisible.  Let~$\ell$ be the period of~$\code(N)$.  Then since
the length of the necklace~$\code(N)$ is~$n$, there is an integer~$p$ such
that~$n=p\ell$.  The period of~$N$ is~$\ell+\alpha$
where~$\alpha:=\sum_{i=1}^{\ell}a_i$.  Since~$\sum_{i=1}^{n}a_i=k$, and the
period of~$\code(N)$ is~$\ell$, it follows that~$p\alpha=k$.  Therefore,
\begin{align*}
  \text{$N$ is~$m_j$-divisible}&\quad\Longleftrightarrow\quad
  \left( \frac{n+k}{\gcd(n,k,j)}\right)\bigg|\,(\ell+\alpha)\\[8pt]
  &\quad\Longleftrightarrow\quad\frac{p(\ell+\alpha)}{\gcd(p\ell,p\alpha,j)}\bigg|\,(\ell+\alpha)\\[8pt]
   &\quad\Longleftrightarrow\quad \frac{\gcd(p\ell,p\alpha,j)}{p(\ell+\alpha)}\cdot(\ell+\alpha)\in\Z\\[8pt]
   &\quad\Longleftrightarrow\quad p|j.
\end{align*}
Similarly,
\[
  \text{$\code(N)$ is~$n_j$-divisible}
  \quad \Leftrightarrow\quad\left( \frac{n}{\gcd(n,j)} \right)\bigg|\,\ell
  \quad\Leftrightarrow\quad\frac{p\ell}{\gcd(p\ell,j)}\bigg|\,\ell
  \quad\Leftrightarrow\quad p|j.
\]

Continuing with the notation already established, we now prove part~\ref{basis
part2}.  Since~$f_N$ is defined using the Reynolds operator, it
is~$[D_j]$-invariant. To see that it is non-zero, we show that the
monomial~$y^a$ appears in~$f_N$ with a nonzero coefficient. We
have~$y^a=y^{\sigma^i(a)}$ if and only if~$i$ is a multiple of the
period~$\ell$.  However,~$\ell$ is divisible by~$n_j$, which is the order
of~$\omega^{j}$.  So the coefficient of~$y^{\sigma^i(a)}$ in the expression for
the Reynolds operator is~$\omega^{-ij}/n=1/n$.  Therefore, the coefficient
of~$y^{a}$ in~$f_N$ is the integer~$p/n=1/\ell\neq0$.  It now follows that~$f_N$
has degree~$k$.

Let~$\mathcal{B}$ be the set of~$f_N$ as~$N$ varies over~$m_j$-divisible
elements of~$\neck(n,k)$.  Distinct elements of~$\mathcal{B}$ share no monomials
in common, and hence~$\mathcal{B}$ is a linearly independent set.  To
show~$\mathcal{B}$ spans the relative invariant module and finish the
proof, let~$h\in\polyringy^{\Gamma'}_{[D_j],k}$ be a
homogeneous~$[D_j]$-invariant of degree~$k$. For the sake of contradiction,
suppose~$h\notin\Span(\mathcal{B})$.  We regard~$h$ as a sum of terms where each
term is a nonzero constant times a monomial, and the monomials are distinct.
Among all elements of~$\polyringy^{\Gamma'}_{[D_j],k}$ that are
not in~$\Span(\mathcal{B})$, let~$h$ be one with the fewest number of terms, and
let~~$\beta y^b$ be one of these terms. Then~$\beta\mathcal{R}_{[D_j]}(y^b)$ is a
sum of terms appearing in~$h$.  Let~$N\in\neck(n,k)$ be the necklace
with~$\code(N)=[b]$.  Say~$[b]$ has period~$m$ and write~$n=mq$ for some
integer~$q$.  The coefficient of~$y^b$ in~$\mathcal{R}_{[D_j]}(y^b)$ is
\[
  \frac{1}{n}\sum_{i=1}^q(\omega^{-jm})^i,
\]
and the order of~$\omega^{-jm}$ is~$q/\gcd(q,j)$, a divisor of~$q$.
If~$[b]$ is not~$n_j$-divisible, then the order of~$\omega^j$ does not
divide~$m$, and hence,~$\omega^{jm}\neq1$.  It would then follow that the above sum
is~$0$, which contradicts the fact that~$\beta y^b$ is a term of~$h$.
Therefore,~$[b]$ is~$n_j$-divisible
and~$\mathcal{R}_{[D_j]}(y^b)=f_N\in\mathcal{B}$.  However, then the polynomial~$h-\beta f_N$
is an element of~$\polyringy^{\Gamma'}_{[D_j],k}$ with fewer terms
than~$h$ and not in~$\Span(\mathcal{B})$, which is a contradiction.
So~$\Span(\mathcal{B})=\polyringy^{\Gamma'}_{[D_j],k}$.
\end{proof}

\begin{cor}\label{cor: permutation invariants} With notation as in the theorem,
\begin{align*}
  \#|D_j+kq|&=  \#\left\{ N\in\neck(n,k):\text{$N$ is~$m_j$-divisible} \right\}\\[5pt]
	    &=  \#\left\{ N\in\neck(n,k):\text{$\code(N)$ is~$n_j$-divisible}
	    \right\}.
\end{align*}
In particular,~$\#|kq|=\#\neck(n,k)$.
\end{cor}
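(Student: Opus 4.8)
The plan is to identify the number $\#|D_j+kq|$ with the dimension of a single graded piece of a relative invariant ring, and then to read off that dimension from two different bases. First I would use Theorem~\ref{thm: invariants}\ref{inv1}: the monomials $x^E$ with $E\in\E_{[D_j]}$ form a $\C$-basis of $\polyring^{\Gamma}_{[D_j]}$. Because $E\mapsto x^E$ carries a divisor of degree $k$ to a monomial of degree $k$, the degree-$k$ piece of $\polyring^{\Gamma}_{[D_j]}$ has as basis exactly the monomials coming from degree-$k$ effective divisors in $\E_{[D_j]}$; by the definition of $\E_{[D_j]}$ these are precisely the elements of $|D_j+kq|$. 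Hence that graded piece has dimension $\#|D_j+kq|$.

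Next I would transport this count across the change of coordinates. The linear isomorphism \eqref{eqn: permutation iso}, $f\mapsto f\circ A$, is degree-preserving, so it restricts to an isomorphism of degree-$k$ pieces, yielding $\dim_{\C}\polyringy^{\Gamma'}_{[D_j],k}=\#|D_j+kq|$. I would then invoke Theorem~\ref{thm: permutation invariants}\ref{basis part2}, which exhibits $\{f_N\}$, as $N$ ranges over the $m_j$-divisible necklaces in $\neck(n,k)$, as a basis of exactly this space. Comparing the sizes of the two bases produces the first displayed equality, and the second equality is then immediate from part~(1) of the same theorem, which equates $m_j$-divisibility of $N$ with $n_j$-divisibility of $\code(N)$.

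For the final assertion I would specialize to the class $[D_j]=[0]$, where $\omega^j=1$ and hence $n_j=\ord(\omega^j)=1$. Since every necklace has period divisible by $1$, every $\code(N)$ is $1$-divisible, so the count collapses to the full cardinality $\#\neck(n,k)$, giving $\#|kq|=\#\neck(n,k)$.

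As this argument is fundamentally a matching of two bases for the same finite-dimensional vector space, I anticipate no substantive obstacle; the only points meriting care are confirming that both families ($\{x^E\}$ and $\{f_N\}$) consist of homogeneous elements of the correct degree, so that they genuinely restrict to bases of the degree-$k$ graded piece, and checking that the isomorphism \eqref{eqn: permutation iso} preserves degree rather than merely being $\C$-linear.
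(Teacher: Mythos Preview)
Your argument is correct and matches the paper's proof in substance: both identify $\#|D_j+kq|$ with $\dim_{\C}\polyring^{\Gamma}_{[D_j],k}$, transport along the degree-preserving isomorphism~\eqref{eqn: permutation iso}, and count via the basis of Theorem~\ref{thm: permutation invariants}. The only cosmetic difference is that the paper cites Corollary~\ref{cor: lambda-molien} (which packages Theorem~\ref{thm: invariants} as a Hilbert-series identity) whereas you invoke Theorem~\ref{thm: invariants} directly---these are the same step.
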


\begin{proof} The result follows immediately from Corollary~\ref{cor:
  lambda-molien}, Theorem~\ref{thm: permutation invariants}, and the
degree-preserving isomorphism~(\ref{eqn: permutation iso}).
\end{proof}

\begin{example} Consider the case~$(n,k,j)=(4,2,1)$.  We have
\[
  m_1=\frac{4+2}{\gcd(4,2,1)}=6\quad\text{and}\quad n_1=\frac{4}{\gcd(4,1)}=4.
\]
The three elements in~$\neck(4,2)$ are pictured in Figure~\ref{fig: necklaces}, and
have codes~$[2000]$,~$[1100]$, and~$[1010]$.  Of these, only, the first two
are~$n_1$-divisible.
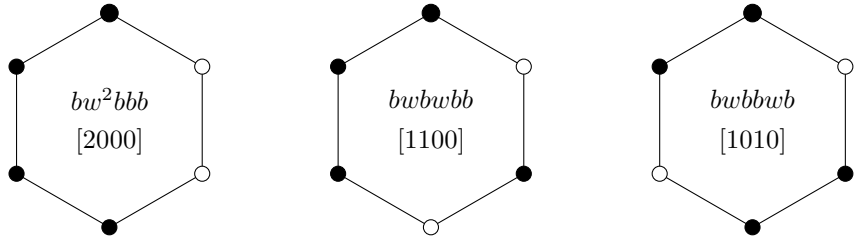
\begin{figure}[ht!]
\centering
  \def\r{1.5}
  \def\xshift{1.2in}
  \def\yshift{-1.0in}
  \def\rot{90}
  \begin{center}
    \begin{tikzpicture}[scale=0.95]
      \draw (0+\rot:\r) \foreach \x in {0,1,2,3,4,5} {
	      -- (\x*360/6+\rot:\r)
	  } -- cycle (0+\rot:\r);
	  \draw[fill=black] (0+\rot:\r) circle(3.5pt);
      \draw[fill=black] (360/6+\rot:\r) circle(3pt);
      \draw[fill=black] (2*360/6+\rot:\r) circle(3pt);
      \draw[fill=black] (3*360/6+\rot:\r) circle(3pt);
      \draw[fill=white] (4*360/6+\rot:\r) circle(3pt);
      \draw[fill=white] (5*360/6+\rot:\r) circle(3pt);
      \node at (0,0.3) {$bw^2bbb$};
      \node at (0,-0.3) {$[2000]$};

      \begin{scope}[xshift=4.5cm]
      \draw (0+\rot:\r) \foreach \x in {0,1,2,3,4,5} {
	      -- (\x*360/6+\rot:\r)
	  } -- cycle (0+\rot:\r);
	  \draw[fill=black] (0+\rot:\r) circle(3.5pt);
      \draw[fill=black] (360/6+\rot:\r) circle(3pt);
      \draw[fill=black] (2*360/6+\rot:\r) circle(3pt);
      \draw[fill=white] (3*360/6+\rot:\r) circle(3pt);
      \draw[fill=black] (4*360/6+\rot:\r) circle(3pt);
      \draw[fill=white] (5*360/6+\rot:\r) circle(3pt);
      \node at (0,0.3) {$bwbwbb$};
      \node at (0,-0.3) {$[1100]$};
    \end{scope}
      \begin{scope}[xshift=9cm]
      \draw (0+\rot:\r) \foreach \x in {0,1,2,3,4,5} {
	      -- (\x*360/6+\rot:\r)
	  } -- cycle (0+\rot:\r);
	  \draw[fill=black] (0+\rot:\r) circle(3.5pt);
      \draw[fill=black] (360/6+\rot:\r) circle(3pt);
      \draw[fill=white] (2*360/6+\rot:\r) circle(3pt);
      \draw[fill=black] (3*360/6+\rot:\r) circle(3pt);
      \draw[fill=black] (4*360/6+\rot:\r) circle(3pt);
      \draw[fill=white] (5*360/6+\rot:\r) circle(3pt);
      \node at (0,0.3) {$bwbbwb$};
      \node at (0,-0.3) {$[1010]$};
    \end{scope}
    \end{tikzpicture}
  \end{center}
  \caption{Binary necklaces with~$4$ black beads and~$2$ white beads with their
  codes.  The first two necklaces are~$1$-,~$2$-,~$3$- and~$6$-divisible, and the last one
is~$1$- and~$3$-divisible.}\label{fig: necklaces}
\end{figure}

\noindent As an instance of Theorem~\ref{thm: permutation invariants}, apply the
Reynold's operator with~$\omega=\I=\sqrt{-1}$ to the two~$6$-divisible necklaces
to find a basis for~$\polyringy_{[D_1],2}^{\Gamma'}$:
\begin{align*}
  4f_{[2000]} &= \I^{-1}y_2^2+\I^{-2} y_3^{2}+\I^{-3}y_4^2+\I^{-4}y_1^2=
  -\I y_2^2-y_3^2+\I y_4^2+ y_1^2\\[5pt]
  4f_{[1100]}&=\I^{-1}y_2y_3+\I^{-2}y_3y_4+\I^{-3}y_4y_1+\I^{-4}y_1y_2
  =-\I y_2y_3-y_3y_4+\I y_1y_4+y_1y_2.
\end{align*}
To change to the basis corresponding to the diagonal representation, substitute
\begin{align*}
  y_1 &= x_1+x_2+x_3+x_4\\
  y_2 &= \I x_1- x_2-\I x_3 +x_4\\
  y_3 &= -x_1+ x_2-x_3+x_4\\
  y_4 &= -\I x_1- x_2+\I x_3+x_4,
\end{align*}
to find
\begin{align*}
  f_{[2000]}&= 2(x_{1} x_{4} + x_{2} x_{3})\\
  f_{[1100]}&= \left(1+\I\right)(x_{1} x_{4} -  x_{2} x_{3}),
\end{align*}
which is a basis for~$\polyring_{[D_1],2}^{\Gamma}$.  Note that~$x_1x_4$
and~$x_2x_3$ form a monomial basis for~$\polyring_{[D_1],2}^{\Gamma}$ whose
exponent vectors are exactly the elements
of~$|D_1+2q|=\left\{(1,0,0,1),(0,1,1,0) \right\}$ in accordance with
Theorem~\ref{thm: invariants}. 

Next, consider the case~$(n,k,j)=(4,2,2)$.  We have~$m_2=3$ and~$n_2=2$.  All
three necklaces in~$\neck(4,2)$ are~$3$-divisible.  The corresponding basis
for~$\polyringy_{[D_2],2}^{\Gamma'}$ is
\begin{align*}
  4f_{[2000]} &= \I^{-2}y_2^2+\I^{-4} y_3^{2}+\I^{-6}y_4^2+y_1^2=
  -y_2^2+y_3^2- y_4^2+ y_1^2\\[5pt]
  4f_{[1100]}&=\I^{-2}y_2y_3+\I^{-4}y_3y_4+\I^{-6}y_4y_1+y_1y_2
  =- y_2y_3+y_3y_4- y_1y_4+y_1y_2\\[5pt]
  4f_{[1010]}&=\I^{-2}y_2y_4+\I^{-4}y_3y_1+\I^{-6}y_4y_2+y_1y_3
  =2y_1y_3- 2y_2y_4.
\end{align*}
Substitute to get a basis for~$\polyring_{[D_2],2}^{\Gamma}$:
\begin{align*}
  f_{[2000]}&=x_{1}^{2}+ 2 \, x_{2} x_{4}+  x_{3}^{2}  \\
  f_{[1100]}&= \I (x_{1}^{2} -  x_{3}^{2})\\
  f_{[1010]}&= -x_{1}^{2} + 2 \, x_{2} x_{4}- x_{3}^{2}.
\end{align*}
The corresponding complete linear system
is~$|D_2+2q|=\left\{(2,0,0,0),(0,1,0,1),(0,0,2,0) \right\}$, which by
Theorem~\ref{thm: invariants} yields the monomial
basis~$\left\{x_1^2,x_2x_4,x_3^2\right\}$ for~$\polyring^{\Gamma}_{[D_2],2}$ .
\end{example}

\subsection{Combinatorial bijection}  We now give an independent proof of
Corollary~\ref{cor: permutation invariants} in the case where~$n$ and~$k$ are
relatively prime.
Given~$D\in\Div(C_n)\simeq\Z^n$ and~$v\in\Z^n$, let~$D\cdot v$ be the usual dot
product of vectors.  Extend the rotation operator~$\sigma$ on words to
divisors by letting~$\sigma(D)(v_i):=D(v_{i+1})$ for all~$i$ modulo~$n$.

\begin{lemma}\label{lemma: divisors on C_n}  Let~$n,k\in\Z$ with~$n>1$, and let~$D\in\Div(C_n)$.
  Let
  \[
    \eta := (1,2,\dots,n)\quad\text{and}\quad\vec{1}=(1,1,\dots,1)\in\Z^n.
  \]
  Then~$D\sim D_j+kq$ if and only~$D\cdot\vec{1}=k$ and~$D\cdot\eta=j\bmod n$.
\end{lemma}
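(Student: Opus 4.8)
The plan is to reduce everything to the standard description of linear equivalence from Section~\ref{sect: prelims} and then to recognize $D\mapsto D\cdot\eta \bmod n$ as an explicit isomorphism from $\Jac(C_n)$ onto $\Z/n\Z$. Recall from the combination of \eqref{eqn: Pic(G)} and \eqref{eqn: Jac(G)} that $D\sim D'$ if and only if $\deg(D)=\deg(D')$ and $[D-\deg(D)q]=[D'-\deg(D')q]$ in $\Jac(C_n)$. Since $\deg(D_j+kq)=k$ and $[D_j+kq-kq]=[D_j]$, the relation $D\sim D_j+kq$ is equivalent to the pair of conditions $\deg(D)=k$ and $[D-kq]=[D_j]$. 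The first of these is exactly $D\cdot\vec{1}=k$, so it remains to show that, once $\deg(D)=k$, the condition $[D-kq]=[D_j]$ is equivalent to $D\cdot\eta\equiv j \bmod n$.

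To handle the Jacobian condition, I would introduce the group homomorphism $\psi\colon\Div(C_n)\to\Z/n\Z$ given by $\psi(D):=D\cdot\eta \bmod n$. The crucial step is to check that $\psi$ vanishes on $\Prin(C_n)=\im_{\Z}L$. It suffices to evaluate $\psi$ on the generators $\smalldiv(\chi_{v_i})=Le_i=2e_i-e_{i-1}-e_{i+1}$, with indices taken modulo $n$. For $1<i<n$ one gets $2i-(i-1)-(i+1)=0$, while at the two seam vertices $i=1$ and $i=n$ the wraparound of the cycle produces $\pm n$, which vanishes modulo $n$. This computation---showing that the seam contributes exactly a multiple of $n$---is the heart of the argument and the reason the second invariant is only well defined modulo $n$. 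It follows that $\psi$ descends to $\Pic(C_n)$ and restricts to a homomorphism $\Jac(C_n)\to\Z/n\Z$.

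Finally I would identify this homomorphism. Computing $\psi(D_1)=\psi(v_1-q)=1-n\equiv 1 \bmod n$ shows that $\psi$ sends the generator $[D_1]$ of the cyclic group $\Jac(C_n)\cong\Z/n\Z$ to a generator of $\Z/n\Z$, so $\psi|_{\Jac(C_n)}$ is an isomorphism. Consequently, for degree-zero divisors, $[D-kq]=[D_j]$ holds if and only if $\psi(D-kq)=\psi(D_j)$. Expanding, $\psi(D-kq)=D\cdot\eta-kn\equiv D\cdot\eta$ and $\psi(D_j)=j-n\equiv j$, so this last condition reads $D\cdot\eta\equiv j\bmod n$. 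Combining with $\deg(D)=k$ from the first paragraph yields both directions of the equivalence. The only genuinely nontrivial point is the vanishing of $\psi$ on the two seam generators of $\Prin(C_n)$; everything else is routine bookkeeping with the isomorphisms of Section~\ref{sect: prelims}.
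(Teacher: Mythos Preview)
Your proof is correct and follows essentially the same route as the paper. Both arguments hinge on the observation that each column~$c$ of the Laplacian of~$C_n$ satisfies $c\cdot\eta\equiv 0\bmod n$, so that $D\mapsto D\cdot\eta\bmod n$ is a linear-equivalence invariant; the paper states this in one line and then reads off the invariants of the canonical representative $D_i+mq$, whereas you package the same computation as a homomorphism $\psi$ and verify explicitly that it is an isomorphism on $\Jac(C_n)$.
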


\begin{proof} First note that if~$c$ is a column of the Laplacian matrix
for~$C_n$, then~$c\cdot\eta=0\bmod n$.  Given any~$D\in\Div(C_n)$, there
exists~$i$ and~$m$ such that~$D\sim D_i+mq$.  Then~$D\cdot\vec{1}=m$, and
\[
  D\cdot\eta = (D_i+mq)\cdot\eta\bmod n= i\bmod n.
\]
The result follows.
\end{proof}

\begin{thm}\label{thm: necklace bijection}
  Given an effective divisor~$E=(E(v_1),\dots,E(v_n))\in\N^n\subset\Div^k(C_n)$, define the
  word 
\[
  w_E:=bw^{E(v_1)}bw^{E(v_2)}\dots bw^{E(v_n)},
\]
and the corresponding necklace~$N_E:=[w_E]\in\neck(n,k)$ with~$\code(N_E)=[E(v_1)\dots
E(v_n)]$.  If $\gcd(n,k)=1$, then for each~$j\in[n]$,
\begin{align*}
 \psi\colon |D_j+kq|&\to\neck(n,k)\\
  E&\mapsto N_E
\end{align*}
is a bijection.
\end{thm}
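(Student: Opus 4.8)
The plan is to translate the claimed bijection into a single congruence governing how the rotation operator~$\sigma$ acts on the quantity $E \cdot \eta$ modulo~$n$, and then to read off both injectivity and surjectivity from the hypothesis $\gcd(n,k)=1$. Two preliminaries set the stage. Since $\deg(D_j + kq) = k$, every $E \in |D_j + kq|$ is effective of degree~$k$, so $N_E \in \neck(n,k)$ and $\psi$ is well defined; and by Lemma~\ref{lemma: divisors on C_n},
\[
  |D_j + kq| = \left\{ E \in \N^n : E \cdot \vec{1} = k \text{ and } E \cdot \eta \equiv j \bmod n \right\}.
\]
Moreover, the necklace $N_E$ is recorded faithfully by its code $[E(v_1)\dots E(v_n)]$, so $N_E = N_{E'}$ holds if and only if the tuples $(E(v_1),\dots,E(v_n))$ and $(E'(v_1),\dots,E'(v_n))$ are cyclic rotations of one another, i.e.\ $E' = \sigma^i(E)$ for some~$i$.

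The engine of the proof is the computation of the effect of one rotation on $E \cdot \eta$. Comparing coefficients term by term yields $\sigma(E)\cdot\eta - E\cdot\eta = n\,E(v_1) - \deg(E)$, so for $E$ of degree~$k$ this gives $\sigma(E)\cdot\eta \equiv E\cdot\eta - k \bmod n$, and iterating (each $\sigma^i$ preserves degree) $\sigma^i(E)\cdot\eta \equiv E\cdot\eta - ik \bmod n$ for every~$i$. The hypothesis $\gcd(n,k)=1$ enters here, and only here, by making~$k$ a unit modulo~$n$.

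Injectivity and surjectivity are then short. If $E, E' \in |D_j + kq|$ with $N_E = N_{E'}$, write $E' = \sigma^i(E)$ with $0 \le i < n$; the congruence gives $j \equiv E'\cdot\eta \equiv j - ik \bmod n$, so $ik \equiv 0 \bmod n$, whence $i \equiv 0 \bmod n$ and $E' = E$. For surjectivity, pick for each $N \in \neck(n,k)$ any representative $E_0 \in \N^n$ of degree~$k$ with $N_{E_0} = N$ (reading exponents off a word $bw^{a_1}\cdots bw^{a_n}$). Its rotations $\sigma^i(E_0)$ all map to~$N$, and since~$k$ is invertible the values $E_0\cdot\eta - ik$ cover every residue modulo~$n$ as~$i$ runs over $0,\dots,n-1$; choosing~$i$ with $\sigma^i(E_0)\cdot\eta \equiv j \bmod n$ produces a preimage of~$N$ inside $|D_j + kq|$.

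I expect the only real obstacle to be getting the rotation formula $\sigma(E)\cdot\eta \equiv E\cdot\eta - k \bmod n$ exactly right, in particular the index-shift bookkeeping that produces the term $n\,E(v_1) - \deg(E)$; everything else is a formal consequence of~$k$ being a unit modulo~$n$. It is worth noting, as a structural sanity check, that the same formula forces every necklace in $\neck(n,k)$ to have full period~$n$ when $\gcd(n,k)=1$: a period~$p$ gives $\sigma^p(E) = E$, hence $pk \equiv 0 \bmod n$ and $n \mid p$, so $p = n$. This is the conceptual reason each rotation orbit meets each fiber $|D_j+kq|$ in exactly one point.
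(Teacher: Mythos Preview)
Your proof is correct and follows essentially the same route as the paper: both reduce to Lemma~\ref{lemma: divisors on C_n}, compute that $\sigma^i(E)\cdot\eta \equiv E\cdot\eta \pm ik \bmod n$, and then use $\gcd(n,k)=1$ to force $i\equiv 0$ (injectivity) and to hit any target residue (surjectivity). Your explicit calculation $\sigma(E)\cdot\eta - E\cdot\eta = nE(v_1)-\deg(E)$ gives the sign $-ik$, whereas the paper writes $\sigma^i(E)\cdot\eta = E\cdot(\eta+i\vec{1})$ yielding $+ik$; the discrepancy is immaterial since only $ik\equiv 0\bmod n$ matters, and your closing remark on full period is exactly the content of the paper's subsequent Remark.
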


\begin{proof} Let~$\eta=(1,2,\dots,n)$ as in Lemma~\ref{lemma: divisors on C_n}.
  To show injectivity, suppose~$\psi(E)=\psi(E')$ for some pair of
  effective divisors~$E,E'\in\Div^k(C_n)$.  It follows that~$E'=\sigma^i(E)$ for
  some~$i$.  By Lemma~\ref{lemma: divisors on C_n}, working modulo~$n$,
  \[
    j = E'\cdot\eta =\sigma^i(E)\cdot\eta=E\cdot(\eta+i\vec{1})=j+ik\bmod n.
  \]
  If~$\gcd(n,k)=1$, it follows that~$i=0\bmod n$, and hence~$E=E'$.

  For surjectivity, let~$N\in\neck(n,k)$ with~$\code(N)=[a_1\dots a_n]$.
  Let~$E:=(a_1,\dots,a_n)\in\Div^k(C_n)$, and say~$E\cdot\eta=m\bmod n$.
  Then $\sigma^i(E)\cdot\eta=m+ik\bmod n$ for each~$i$.  If~$\gcd(n,k)=1$,
  we can take~$i$ so that~$m+ik=j\bmod n$ and define~$E':=\sigma^i(E)$.
  Then~$E'\in|D_j+kq|$ and~$\psi(E')=N$.
\end{proof}

\begin{remark}  If~$N\in\neck(n,k)$ and~$\code(N)$ has period~$\ell$, then as we
  saw in the proof of Theorem~\ref{thm: permutation invariants}, both~$\ell|n$
  and~$\ell|k$.  Thus, if~$\gcd(n,k)=1$, it follows that~$\ell=1$.  In other
  words, each element of~$\neck(n,k)$ has period~$n+k$.  Further, by the
  proof of Theorem~\ref{thm: necklace bijection}, there is a commutative
  diagram of isomorphisms of sets:
  \begin{center}
    \begin{tikzcd} 
      {|kq|}\arrow[d,"\sigma^j"',pos=0.35]\arrow[r,"\psi"] & \neck(n,k)\\
      {|D_j+kq|}\arrow[ru,"\psi"',pos=0.45]&
  \end{tikzcd}.
  \end{center}
\end{remark}

\begin{remark}  (Duality) Switching colors gives a bijection
  between~$\neck(n,k)$ and~$\neck(k,n)$.  Therefore, fixing vertices~$q$
  on~$C_n$ and~$q'$ on~$C_k$, Corollary~\ref{cor: permutation invariants} says
  that the cardinality of $|kq|$ on~$C_n$ is equal to that of ~$|nq'|$ on~$C_k$.  Further, when~$n$
  and~$k$ are relatively prime, Theorem~\ref{thm: necklace bijection} gives a
  combinatorial bijection between these complete linear systems.
\end{remark}

\begin{example} Figure~\ref{fig: C_3 necklaces} illustrates the bijection of
  Theorem~\ref{thm: necklace bijection} for the case~$n=3$ and~$k=4$ and for
  all~$j=0,1,2$.  The linear systems~$|D_j+4q|$ are the same up to cyclic rotation:
  \begin{align*}
    |4q|&=\left\{ (0,0,4),(0,3,1),(1,1,2),(2,2,0),(3,0,1) \right\}\\
    |D_1+4q|&=\left\{ (4,0,0),(1,0,3),(2,1,1),(0,2,2),(1,3,0) \right\}\\
    |D_2+4q|&=\left\{ (0,4,0),(3,1,0),(1,2,1),(2,0,2),(0,1,3) \right\}.
  \end{align*}
\begin{figure}[ht!]
\centering
  \def\r{1.1}
  \def\xshift{1.2in}
  \def\yshift{-1.0in}
  \def\rot{3*360/28}
  \begin{center}
    \begin{tikzpicture}[scale=0.95]
      \draw (0+\rot:\r) \foreach \x in {0,1,...,6} {
	      -- (\x*360/7+\rot:\r)
	  } -- cycle (0+\rot:\r);
      \draw[fill=black] (0+\rot:\r) circle(3pt);
      \draw[fill=black] (360/7+\rot:\r) circle(3pt);
      \draw[fill=black] (2*360/7+\rot:\r) circle(3pt);
      \draw[fill=white] (3*360/7+\rot:\r) circle(3pt);
      \draw[fill=white] (4*360/7+\rot:\r) circle(3pt);
      \draw[fill=white] (5*360/7+\rot:\r) circle(3pt);
      \draw[fill=white] (6*360/7+\rot:\r) circle(3pt);
      \begin{scope}[yshift=2.8]
      \draw (270:\r/2.1) node[below] {$\scriptstyle 4$}
      --(30:\r/2.1) node[right] {$\scriptstyle 0$}
      --(150:\r/2.1) node[left] {$\scriptstyle 0$}
	  --cycle;
      \end{scope}
      \begin{scope}[xshift=\xshift]
	\draw (0+\rot:\r) \foreach \x in {0,1,...,6} {
		-- (\x*360/7+\rot:\r)
	    } -- cycle (0+\rot:\r);
	\draw[fill=black] (0+\rot:\r) circle(3pt);
	\draw[fill=white] (360/7+\rot:\r) circle(3pt);
	\draw[fill=white] (2*360/7+\rot:\r) circle(3pt);
	\draw[fill=white] (3*360/7+\rot:\r) circle(3pt);
	\draw[fill=black] (4*360/7+\rot:\r) circle(3pt);
	\draw[fill=white] (5*360/7+\rot:\r) circle(3pt);
	\draw[fill=black] (6*360/7+\rot:\r) circle(3pt);
        \begin{scope}[yshift=2.8]
	\draw (270:\r/2.1) node[below] {$\scriptstyle 1$}
	--(30:\r/2.1) node[right] {$\scriptstyle 0$}
	--(150:\r/2.1) node[left] {$\scriptstyle 3$}
	    --cycle;
	  \end{scope}
      \end{scope}
      \begin{scope}[xshift=2*\xshift]
	\draw (0+\rot:\r) \foreach \x in {0,1,...,6} {
		-- (\x*360/7+\rot:\r)
	    } -- cycle (0+\rot:\r);
	\draw[fill=white] (0+\rot:\r) circle(3pt);
	\draw[fill=black] (360/7+\rot:\r) circle(3pt);
	\draw[fill=white] (2*360/7+\rot:\r) circle(3pt);
	\draw[fill=black] (3*360/7+\rot:\r) circle(3pt);
	\draw[fill=white] (4*360/7+\rot:\r) circle(3pt);
	\draw[fill=white] (5*360/7+\rot:\r) circle(3pt);
	\draw[fill=black] (6*360/7+\rot:\r) circle(3pt);
        \begin{scope}[yshift=2.8]
	\draw (270:\r/2.1) node[below] {$\scriptstyle 2$}
	--(30:\r/2.1) node[right] {$\scriptstyle 1$}
	--(150:\r/2.1) node[left] {$\scriptstyle 1$}
	    --cycle;
	  \end{scope}
      \end{scope}
      \begin{scope}[xshift=3*\xshift]
	\draw (0+\rot:\r) \foreach \x in {0,1,...,6} {
		-- (\x*360/7+\rot:\r)
	    } -- cycle (0+\rot:\r);
	\draw[fill=white] (0+\rot:\r) circle(3pt);
	\draw[fill=black] (360/7+\rot:\r) circle(3pt);
	\draw[fill=white] (2*360/7+\rot:\r) circle(3pt);
	\draw[fill=white] (3*360/7+\rot:\r) circle(3pt);
	\draw[fill=black] (4*360/7+\rot:\r) circle(3pt);
	\draw[fill=black] (5*360/7+\rot:\r) circle(3pt);
	\draw[fill=white] (6*360/7+\rot:\r) circle(3pt);
        \begin{scope}[yshift=2.8]
	\draw (270:\r/2.1) node[below] {$\scriptstyle 0$}
	--(30:\r/2.1) node[right] {$\scriptstyle 2$}
	--(150:\r/2.1) node[left] {$\scriptstyle 2$}
	    --cycle;
	  \end{scope}
      \end{scope}
      \begin{scope}[xshift=4*\xshift]
	\draw (0+\rot:\r) \foreach \x in {0,1,...,6} {
		-- (\x*360/7+\rot:\r)
	    } -- cycle (0+\rot:\r);
	\draw[fill=white] (0+\rot:\r) circle(3pt);
	\draw[fill=white] (360/7+\rot:\r) circle(3pt);
	\draw[fill=white] (2*360/7+\rot:\r) circle(3pt);
	\draw[fill=black] (3*360/7+\rot:\r) circle(3pt);
	\draw[fill=black] (4*360/7+\rot:\r) circle(3pt);
	\draw[fill=white] (5*360/7+\rot:\r) circle(3pt);
	\draw[fill=black] (6*360/7+\rot:\r) circle(3pt);
        \begin{scope}[yshift=2.8]
	\draw (270:\r/2.1) node[below] {$\scriptstyle 1$}
	--(30:\r/2.1) node[right] {$\scriptstyle 3$}
	--(150:\r/2.1) node[left] {$\scriptstyle 0$}
	    --cycle;
	\end{scope}
      \end{scope}
    \end{tikzpicture}
  \end{center}
  \caption{The complete linear system~$|D_j+4q|$ on~$C_3$ 
  in bijection with the necklaces~$\neck(3,4)$ according to Theorem~\ref{thm:
necklace bijection}.}\label{fig: C_3 necklaces} 
\end{figure}
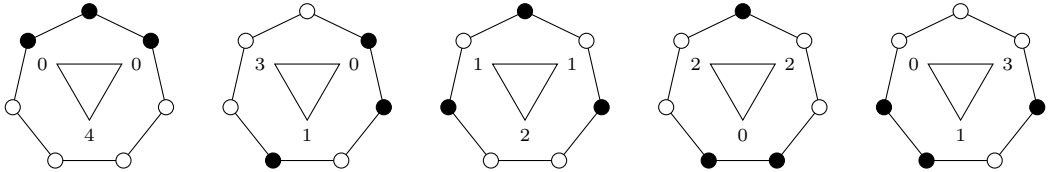 
\end{example}

\section{Extension to~$M$-matrices}\label{sect: m-matrices}  In this section,
we explain how to extend our results to a broader context.  It has been shown that
many aspects of the divisor theory of graphs are retained by a theory in which
reduced Laplacians of graphs are replaced by the more general class of matrices
called~{\em ~$M$-matrices} (\cite{Gabrielov}, \cite{Guzman}, \cite{Postnikov}).
To establish notation: if~$H$ and~$H'$ are matrices or vectors of the same
dimensions, write~$H\geq H'$ (resp.,~$H>H'$) if each entry of~$H-H'$ is
nonnegative (resp., positive).
\begin{defn}  Let~$A$ be an~$(n-1)\times(n-1)$ matrix over~$\R$ with~$A_{ij}\leq
  0$ for all~$i\neq j$.  Then~$A$ is a (non-singular)~{\em $M$-matrix} if any of
  the following equivalent conditions holds:
  \begin{enumerate}
    \item $A=sI_{n-1}-B$ for some matrix~$B\geq0$ and some 
      $s>\max\left\{ |\lambda|:\text{$\lambda$ an eigenvalue of~$B$}\right\}$.
    \item Each eigenvalue of~$A$ has positive real part.
    \item Each principal minor of~$A$ is positive.
    \item $A^{-1}$ exists and~$A^{-1}\geq0$.
    \item If~$Au\geq0$, then~$u\geq0$.
    \item There exists~$u>0$ such that~$Au>0$.
    \item\label{item: burning} There exists~$u>0$ with~$Au\geq0$ and such that
      if~$(Au)_{i_0}=0$ for some~$i_0$, 
      then there exists indices~$i_1,\dots,i_r$ with~$A_{i_{k}i_{k+1}}\neq0$
      for~$k=1,\dots,r-1$ and~$(Au)_{i_r}>0$.
  \end{enumerate}
\end{defn}
The above seven equivalent conditions come from the list of~$40$ given
by Plemmons (\cite{Plemmons}).  

From now on, we assume that~$A$ is an integer~$M$-matrix.  In that case, any
integer vector~$u$ satisfying property~(\ref{item: burning}) is called a~{\em
burning script}.  A burning script for~$A$ always exists and a unique minimal
one (with respect to~$\leq$) can be constructed as follows: start
with~$u=(1,\dots,1)$, and then as long as~$(Au)_i<0$ for some~$i$,
increase~$u_i$ by~$1$ (\cite{PPW}; \cite{Corry}, Chapter~7). If~$u$ is a burning
script, then~$Au$ is called a {\em burning configuration}.

Let~$u=(u_1,\dots,u_n)>0$ and~$w=(w_1,\dots w_n)>0$ be any integer vectors such that both~$Au\geq0$
and~$wA\geq0$. Their existence is guaranteed by property~(\ref{item: burning}) and
the fact that the transpose~$A^t$ of~$A$ is also an~$M$-matrix. We
do not require that~$u$ and~$w$ be burning scripts.  Next, define the {\em
$(w,u)$-extension} of~$A$ to be the~$n\times n$ matrix
\[
  \wA:=
  \left(\begin{array}{c|c}
      wAu&-wA\\\hline
      -Au&A
  \end{array} \right).
\]
The following vectors are primitive generators for the left and right kernels,
respectively, of~$\wA$:
\[
  \phi=(1,w_1,w_2,\dots,w_{n-1})
  \quad\text{and}\quad
  \delta=(1,u_1,u_2,\dots,u_{n-1}).
\]

\begin{example} Let~$L$ be the Laplacian matrix for a connected, undirected
  graph with respect to some ordering of the vertices, and let~$A=\tL$ be the
  corresponding reduced Laplacian with respect to the first vertex.
  Then~$A=A^t$ is an~$M$-matrix (\cite{Guzman}) with minimal
  burning script~$u=w=(1,\dots,1)$.  The~$(w,u)$-extension of~$A$
  recovers~$L$, i.e.,~$\wA=L$. 
\end{example}

We now extend our earlier results on the cardinality of complete linear systems
to the setting of~$M$-matrices.  A {\em divisor} is an
element~$D\in\Div(\wA):=\Z^n$.  The {\em degree} of a divisor~$D$ is given by
the dot product~$\deg(D):=\phi\cdot D$.  Define linear equivalence of divisors
by~$D\sim D'$ if~$D-D'\in\im_{\Z}\wA$.  As before,
let~$\Pic(\wA):=\Z^{n}/\!\sim$, which is graded by (our new) degree,
and~$\Jac(\wA):=\Pic^{0}(\wA)$, the group of divisor classes of divisors of
degree~$0$.  


For notational purposes, define~$v_i:=e_i$, the~$i$-th standard basis vector,
for ~$i=1,\dots,n$. The isomorphisms~\eqref{eqn:
Pic(G)} and~\eqref{eqn: Jac(G)} of Section~\ref{sect: prelims} hold in this new
setting in which~$\tL$ is replaced by~$A$.  For each divisor
class~$[D]\in\Pic(\wA)$, define the {\em complete linear system}~$|D|$, the
set~$\E_{[D]}$, and the~$\lambda$-generating function~$\Lambda_{[D]}(z)$ as in
Section~\ref{sect: prelims}.  Substituting~$\wA$ and~$A$ for the Laplacian and
reduced Laplacian, respectively, our main results generalize, with nearly
identical proofs, after suitably modifying the statements to take into account
our new notion of degree:
\smallskip

\noindent{\em Primary and secondary divisors.}  For each~$i=1,\dots,n$,  the
degree of~$v_i$ considered as a divisor is~$\deg(v_i)=\phi_i$.
Redefine~$\ord_q(v_i)$ to be the order of~$[v_i-\deg(v_i)q]\in\Pic(\wA)$, and
let~$\ell_i$ be any positive integer multiple of~$\ord_q(v_i)$.  Then
Theorem~\ref{thm: primary-secondary}, Corollary~\ref{cor: ps-genfun}, and
Proposition~\ref{prop: ps-computation} hold after replacing each occurrence
of~$\ell_{v}$ with~$\phi_i\ell_i$.  For instance, in Corollary~\ref{cor:
ps-genfun}, we now have
\begin{equation}\label{eqn: ps-genfun}
  \Lambda_{[D]}(z)=\frac{S(z)}{\prod_{i=1}^{n}(1-z^{\phi_i\ell_i})}
\end{equation}
where~$S(z):=\sum_{F\in\secondary_{[D]}}z^{\deg(F)}$.
\smallskip

\noindent{\em Polyhedra.}  The constructions in Section~\ref{sect: poly} remain
valid. One cosmetic change in the exposition is that instead of taking~$q$ to be
the last vertex of the graph, we now take~$q$ to be the {\em first} standard
basis vector.\footnote{This switch in the placement of~$q$ was made in order to
  conform to the conventions for root systems considered in~\cite{BKR}.  See
Section~\ref{sect: roots}, below.}  This means, for example, that instead of
considering,~$(f,t)\in\R^{n-1}\times\R$, we now
consider~$(t,f)\in\R\times\R^{n-1}$.  Theorem~\ref{thm: poly} then
holds as stated, defining~$\deg(\omega):=\phi\cdot\omega$ in part~\ref{poly3}.
Proposition~\ref{prop: facet} holds by again replacing~$\ell_i$
by~$\phi_i\ell_i$ and redefining~$\ord_q(v_i)$ as discussed above.
\smallskip

\noindent{\em Invariant theory.}  To generalize the results in
Section~\ref{sect: molien}, take~$\polyring$ to have the multigrading determined
by~$\deg(x_i):=\phi_i$ for~$i=1,\dots,n$.  The representation of~$\rho$, described
in~(\ref{rho}), becomes
\begin{equation}\label{eqn: new rho}
  \rho(\chi)=(1,\chi([v_2-\deg(v_2)q]),\chi([v_3-\deg(v_3)q]),\dots,\chi([v_n-\deg(v_n)q])).
\end{equation}
Theorem~\ref{thm: invariants} then extends with no changes to its statement.  For
Corollary~\ref{cor: lambda-molien}, use a multigraded version of Molien's theorem,
for abelian groups, to get
\begin{equation}\label{eqn: new lambda_molien}
  \Lambda_{[D]}(z)=
\Phi_{\Gamma,\chi}(z^{\phi_1},\dots,z^{\phi_n})
:=\frac{1}{|\Jac(\wA)|}\sum_{\chi\in\Jac(\wA)^*}\frac{\overline{\chi([D])}}{\det(I_n-\diag(z^{\phi_1},\dots,z^{\phi_n})\rho(\chi))}
\end{equation}
where~$\diag(\,\cdot\,)$ denotes the diagonal matrix with the given diagonal entries.

\subsection{Root systems and McKay quivers.}\label{sect: roots and McKay}  
In (\cite{BKR}), Benkart, Klivans, and Reiner
relate two classes of~$M$-matrices to the extended divisor theory described
above.  For the sake of brevity, we give only a cursory description of some of
their work, referring the interested reader to the original paper for definitions and
other details.\footnote{Note that our convention for the Laplacian of a graph differs
from that in~\cite{BKR} by a transpose.}
\smallskip

\subsubsection{Root systems}\label{sect: roots}  Let~$\Phi$ be a finite,
crystallographic, irreducible root system. The Cartan matrix~$C$ for~$\Phi$ is
an~$M$-matrix. Its burning configurations are the elements of the root lattice
lying in the fundamental chamber (with respect to a choice of simple roots).
Making particular natural choices for burning configurations for~$C$ and its
transpose~$C^t$, the authors define the extended Cartan matrix~$\widetilde{C}$,
which is the Cartan matrix for the corresponding affine root system.
Letting~$A=C^t$ and~$\wA=\widetilde{C}^t$, it turns out that~$\Pic(\wA)$ is the
fundamental group of~$\Phi$, i.e., the quotient of the weight lattice by the
root lattice.  We think of each~$D\in\Div(\wA)$ as a divisor on the affine
Dynkin diagram for the affine root system corresponding to~$\Phi$, and the
matrix~$\wA$ can be thought of as defining firing rules (as described for the
Laplacian in Section~\ref{sect: prelims}).

\begin{example}  Let~$\Phi$ be the root system~$B_3$.  The transpose of its
  Cartan matrix is
    \[
A =
\left(\begin{array}{rrr}
    2&-1&0\\
    -1&2&-1\\
    0&-2&2
\end{array} \right)
\]
The vectors~$u=(1,2,2)$ and~$w=(1,2,1)$ are burning scripts for~$A$ and~$A^t$,
respectively, (though only the latter is minimal).  The $(w,u)$-extension
of~$A$ is then
\[
\wA = \widetilde{C}^t=
\left(\begin{array}{rrrr}
    2&0&-1&0\\
    0&2&-1&0\\
    -1&-1&2&-1\\
    0&0&-2&2
\end{array} \right),
\]
with left and right kernel generators:
\[
  \phi = (1,1,2,1)\quad\text{and}\quad\delta = (1,1,2,2).
\]
We have~$\Jac(\wA)\simeq\Z/2\Z$ with generator~$D:=(-1,0,0,1)$.  Follow the
procedure in Remark~\ref{remark: method} to compute primary and secondary
divisors:
\begin{align*}
  \primary\ &=  \left\{ (1,0,0,0),(0,1,0,0),(0,0,1,0),(0,0,0,2) \right\}\\
  \secondary_{[0]}&= \left\{ (0,0,0,0) \right\}\\
  \secondary_{[D]}&= \left\{ (0,0,0,1) \right\}.
\end{align*}
Note that the primary divisor~$(0,0,1,0)$ has degree~$\phi\cdot(0,0,1,0)=2$. 
By \eqref{eqn: ps-genfun},
\begin{align*}
  \Lambda_{[0]}(z)&= \frac{1}{(1-z)^2(1-z^2)^2}=
  1 + 2z + 5z^2 + 8z^3 + 14z^4 + 20z^5 + 30z^6 + 40z^7 + 55z^8
  +\cdots\\[8pt]
  \Lambda_{[D]}(z)&= \frac{z}{(1-z)^2(1-z^2)^2}=
  z + 2z^2 + 5z^3 + 8z^4 + 14z^5 + 20z^6 + 30z^7 + 40z^8 + 55z^9 +
  \cdots.
\end{align*}
There is one non-trivial character~$\chi$ for~$\Jac(\wA)$, determined
by~$\chi([D])=-1$.  For the modified representation~\eqref{eqn: new rho}
for~$\Jac(\wA)^*$, we have~$\rho(\chi)=(1,1,1,-1)$.  Therefore, the new Molien
series~\eqref{eqn: new lambda_molien} gives the following forms for
the~$\lambda$-generating functions:
\begin{align*}
  \Lambda_{[0]}(z)&= \frac{1}{2}\left( \frac{1}{(1-z)^2(1-z^2)(1-z)}
+\frac{1}{(1-z)^2(1-z^2)(1+z)}
  \right)\\[8pt]
  \Lambda_{[D]}&= \frac{1}{2}\left( \frac{1}{(1-z)^2(1-z^2)(1-z)}+
  \frac{-1}{(1-z)^2(1-z^2)(1+z)}\right).
\end{align*}
For example, the coefficient of~$z^2$ in the series expansion of~$\Lambda_{[0]}(z)$
indicates there are~$5$ effective divisors in the complete linear
system for the divisor~$2q=(2,0,0,0)$.  These are pictured in
Figure~\ref{fig: cartan}.
\begin{figure}[ht]
\centering
\begin{tikzpicture}[scale=0.9]
  \begin{scope}[scale=0.8]
    \node[circle,draw,inner sep=1.7pt,label={below:{$2$}}] at (-1,-0.86) (q) {};
    \node[circle,draw,inner sep=1.7pt,label={above:{$0$}}] at (-1,0.86) (v2) {};
    \node[circle,draw,inner sep=1.7pt,label={above:{$0$}}] at (0,0) (v3) {};
    \node[circle,draw,inner sep=1.7pt,label={above:{$0$}}] at (1.1,0) (v4) {};
    \draw (q)--(v3)--(v2);
    \draw[double distance=3pt] (v3)--(v4);
    \draw (0.4,-0.25)--(0.6,0)--(0.4,0.25);
  \end{scope}
  \begin{scope}[scale=0.8,xshift=4cm]
    \node[circle,draw,inner sep=1.7pt,label={below:{$0$}}] at (-1,-0.86) (q) {};
    \node[circle,draw,inner sep=1.7pt,label={above:{$0$}}] at (-1,0.86) (v2) {};
    \node[circle,draw,inner sep=1.7pt,label={above:{$1$}}] at (0,0) (v3) {};
    \node[circle,draw,inner sep=1.7pt,label={above:{$0$}}] at (1.1,0) (v4) {};
    \draw (q)--(v3)--(v2);
    \draw[double distance=3pt] (v3)--(v4);
    \draw (0.4,-0.25)--(0.6,0)--(0.4,0.25);
  \end{scope}
  \begin{scope}[scale=0.8,xshift=8cm]
    \node[circle,draw,inner sep=1.7pt,label={below:{$1$}}] at (-1,-0.86) (q) {};
    \node[circle,draw,inner sep=1.7pt,label={above:{$1$}}] at (-1,0.86) (v2) {};
    \node[circle,draw,inner sep=1.7pt,label={above:{$0$}}] at (0,0) (v3) {};
    \node[circle,draw,inner sep=1.7pt,label={above:{$0$}}] at (1.1,0) (v4) {};
    \draw (q)--(v3)--(v2);
    \draw[double distance=3pt] (v3)--(v4);
    \draw (0.4,-0.25)--(0.6,0)--(0.4,0.25);
  \end{scope}
  \begin{scope}[scale=0.8,xshift=12cm]
    \node[circle,draw,inner sep=1.7pt,label={below:{$0$}}] at (-1,-0.86) (q) {};
    \node[circle,draw,inner sep=1.7pt,label={above:{$2$}}] at (-1,0.86) (v2) {};
    \node[circle,draw,inner sep=1.7pt,label={above:{$0$}}] at (0,0) (v3) {};
    \node[circle,draw,inner sep=1.7pt,label={above:{$0$}}] at (1.1,0) (v4) {};
    \draw (q)--(v3)--(v2);
    \draw[double distance=3pt] (v3)--(v4);
    \draw (0.4,-0.25)--(0.6,0)--(0.4,0.25);
  \end{scope}
  \begin{scope}[scale=0.8,xshift=16cm]
    \node[circle,draw,inner sep=1.7pt,label={below:{$0$}}] at (-1,-0.86) (q) {};
    \node[circle,draw,inner sep=1.7pt,label={above:{$0$}}] at (-1,0.86) (v2) {};
    \node[circle,draw,inner sep=1.7pt,label={above:{$0$}}] at (0,0) (v3) {};
    \node[circle,draw,inner sep=1.7pt,label={above:{$2$}}] at (1.1,0) (v4) {};
    \draw (q)--(v3)--(v2);
    \draw[double distance=3pt] (v3)--(v4);
    \draw (0.4,-0.25)--(0.6,0)--(0.4,0.25);
  \end{scope}
\end{tikzpicture}
\caption{The complete linear system of the divisor~$2q=(2,0,0,0)$ for the root
system~$B_3$.}\label{fig: cartan}
\end{figure}
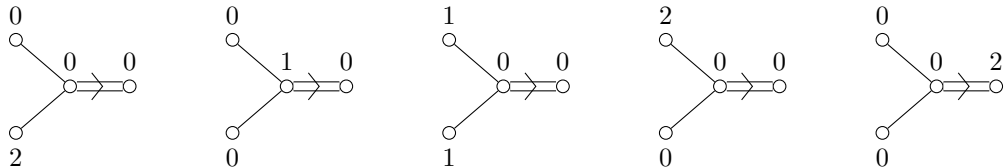
\end{example}

\subsubsection{McKay quivers}\footnote{For this section, in addition to~\cite{BKR}, see the work by
Gaetz,~\cite{Gaetz}.}
Let~$\rho\colon\Gamma\to\GL(\C^n)$ be a faithful representation of a finite
group~$\Gamma$ with character~$\chi_{\rho}$.
Let~$\rho_1,\dots,\rho_n$ be the irreducible complex representations
of~$\Gamma$, with~$\rho_1$ the trivial representation, and with respective
characters~$\chi_1,\dots,\chi_n$.  For each~$i$,
denote the character of the tensor product
~$\rho\otimes\rho_i$ by~$\chi_{\rho}\cdot\chi_i$, and define integers~$m_{ij}$ by
\[
  \chi_{\rho}\cdot\chi_i=\sum_{j=1}^nm_{ij}\chi_i.
\]
Define the~$n\times n$ matrix~$M:=(m_{ij})$ and the
{\em extended McKay-Cartan} matrix~$\widetilde{C}:=nI_n-M$.  The {\em McKay-Cartan}
matrix is then the submatrix~$C$ formed by removing the first row and first column
of~$\widetilde{C}$.  In our notation from above, take~$A=C^t$.  The vectors
$u=w=(\dim\rho_2,\dots,\dim\rho_n)$ are burning scripts for~$A$ and~$A^t$, with
respect to which~$\wA=\widetilde{C}^t$ with left and right kernel generators
\[
  \phi = \delta = (\dim\rho_1,\dots,\dim\rho_n).
\]
The {\em McKay quiver} of~$\gamma$ is the directed graph with
vertices~$\chi_1,\dots,\chi_n$ and~$m_{ij}$ directed edges from~$\chi_i$
to~$\chi_j$ for each~$i,j$.  The matrix~$\wA$ defines firing rules on the McKay
quiver (again as described for the Laplacian in Section~\ref{sect: prelims}).

\begin{example} Consider the representation~$\rho\colon\Jac(G)^*\to\GL(\C^n)$
  defined by~(\ref{rho}) of Section~\ref{sect: molien}.  When~$G=C_n$, the
  cyclic graph on~$n$-vertices,~$\rho$ is the regular representation of the
  cyclic group $\Jac(G)^*\simeq\Z/n\Z$.  Therefore,~$m_{ij}=1$ for all~$i,j$,
  the McKay quiver may be thought of as the (undirected) complete graph~$K_n$
  on~$n$ vertices, and $\wA$ is its Laplacian matrix.
  Thus,~$\Jac(\wA)=\Jac(K_n)$.

  More generally (\cite{BKR}, Section~6.2), the McKay quiver for any faithful complex
  representation~$\gamma$ of an abelian group has (directed) Laplacian matrix equal to the
  matrix~$\wA$ for~$\gamma$.
\end{example}

\section{Further work}\label{sect: further}
Here we suggest three possible directions for further inquiry.
\medskip

\noindent I. Let~$D$ be a divisor of degree~$k$ on a cycle graph with~$n$ vertices.
Corollary~\ref{cor: permutation invariants} shows that the complete linear
system $|D|$ can be enumerated using subsets of the set~$\neck(n,k)$ of binary
necklaces with~$n$ black beads and~$k$ white beads.  In particular,
if~$\gcd(n,k)=1$ or~$D=kq$, then~$\#|D|=\#\,\neck(n,k)$.  Theorem~\ref{thm:
necklace bijection} gives a combinatorial bijection between~$|D|$
and~$\neck(n,k)$ when~$\gcd(n,k)=1$. Motivated by this work, \cite{Oh} finds a
combinatorial bijection in the case~$D=kq$ when~$n$ is prime.  It would be
interesting to find combinatorial bijections for arbitrary~$n$ and~$k$.
\medskip

\noindent II. Section~\ref{sect: m-matrices} establishes tools for enumerating
linear systems related to root systems and representations of finite groups.
It may be worthwhile to investigate the implications for each type of root
system or for certain classes of representations.
\medskip

\noindent III. Assigning lengths to the edges of~$G$ results in a model for a
tropical curve~$\Gamma$ and an associated polytopal cell decomposition of
the~$k$-th symmetric power~$\mathrm{Sym}^k(\Gamma)$ for each~$k$.  If~$D$ is a
(tropical) divisor on~$\Gamma$ of degree~$k$, then the complete linear
system~$|D|$ can be realized as a cell complex in~$\mathrm{Sym}^k(\Gamma)$
(\cite{Gathmann}, \cite{Haase}, \cite{Mikhalkin}).  In the conclusion  of~\cite{Haase},
the authors suggest developing connections between their work on tropical linear
systems and the divisor theory for finite graphs. In particular, they ask for a
combinatorial description of each complete linear system on a finite graph
including a determination of its cardinality.  While we have determined this
cardinality, one could hope to further describe the combinatorics of the
complete linear system for a divisor on~$G$ within the cell complex for the
associated divisor on~$\Gamma$. Example~19 of~\cite{Haase} gives an explicit
description of these cell complexes for the case of the cyclic graph~$G=C_n$
whose linear systems we have related to binary necklaces.  That might be a good
place to start.  

\bibliographystyle{amsplain} 
\bibliography{els}
\end{document}